\pgfplotsset{compat=newest}
\newtheorem{theorem}{Theorem}
\newtheorem{fact}{Fact}
\newtheorem{corollary}{Corollary}
\newtheorem{lemma}{Lemma}
\newtheorem{proposition}{Proposition}
\newtheorem{remark}{Remark}
\newenvironment{customthm}[1]
  {\innercustomthm}
  {\endinnercustomthm}
\numberwithin{equation}{section}
\newcommand{\rev}[1]{\textcolor{black}{#1}}
\DeclareMathOperator{\proj}{proj}
\DeclareMathOperator{\grad}{grad}
\DeclareMathOperator{\Hess}{Hess}
\DeclareMathOperator{\Tr}{Tr}
\DeclareMathOperator{\rank}{rank}
\DeclareMathOperator{\diag}{diag}
\newcommand{\calH}{\ensuremath{\mathcal{H}}}
\newcommand{\calN}{\ensuremath{\mathcal{N}}}
\newcommand{\calX}{\ensuremath{\mathcal{X}}}
\newcommand{\calQ}{\ensuremath{\mathcal{Q}}}
\newcommand{\bbT}{\ensuremath{\mathbb{T}}}
\newcommand{\norm}[1]{\left\|{#1}\right\|}
\newcommand{\abs}[1]{\left|{#1}\right|}
\newcommand{\ceil}[1]{\lceil{#1}\rceil}
\newcommand{\set}[1]{\left\{{#1}\right\}}
\newcommand{\dotprod}[2]{\langle#1,#2\rangle}
\newcommand{\est}[1]{\widehat{#1}}
\newcommand{\matR}{\ensuremath{\mathbb{R}}}
\newcommand{\rmi}{\iota}
\newcommand{\gdot}{\dot{g}}
\newcommand{\ghat}{\est{g}}
\newcommand{\rmd}{{\rm d}}
\newcommand{\Lip}{M}
\newcommand{\real}{\text{Re}}
\newcommand{\imag}{\text{Im}}
\newcommand{\gest}{\ensuremath{\est{g}}}
\newcommand{\gtil}{\ensuremath{\widetilde{g}}}
\newcommand{\ftil}{\ensuremath{\widetilde{f}}}
\newcommand{\xtil}{\ensuremath{\widetilde{x}}}
\newcommand{\util}{\ensuremath{\widetilde{u}}}
\newcommand{\gesttil}{\ensuremath{\widetilde{g}}}
\newcommand{\smooth}{\ensuremath{B}}
\newcommand{\fhat}{\ensuremath{\est{f}}}
\newcommand{\qhat}{\ensuremath{\est{q}}}  
\newcommand{\itup}{\ensuremath{\mathbf{i}}}  
\newcommand{\jtup}{\ensuremath{\mathbf{j}}}  
\newcommand{\diff}{\ensuremath{D}}
\title{Denoising modulo samples: $k$-NN regression and tightness of SDP relaxation\footnotetext{Authors are listed in alphabetical order} }
\author{Micha{\"e}l Fanuel \thanks{KU Leuven, Department of Electrical Engineering (ESAT),
STADIUS Center for Dynamical Systems, Signal Processing and Data Analytics,
Kasteelpark Arenberg 10, B-3001 Leuven, Belgium.}\\ \texttt{michael.fanuel@kuleuven.be}
\and Hemant Tyagi\thanks{Inria, Univ. Lille, CNRS, UMR 8524 - Laboratoire Paul Painlev\'{e}, F-59000 } \\ \texttt{hemant.tyagi@inria.fr}}
\begin{document}
\maketitle

\begin{abstract}
Many modern applications involve the acquisition of noisy modulo samples of a function $f$, with the goal being to recover estimates of the original samples of $f$. For a Lipschitz function $f:[0,1]^d \to \matR$, suppose we are given the samples $y_i = (f(x_i) + \eta_i)\bmod 1; \quad i=1,\dots,n$ where $\eta_i$ denotes noise. Assuming $\eta_i$ are zero-mean i.i.d Gaussian's, and $x_i$'s form a uniform grid, we derive a two-stage algorithm that recovers estimates of the samples $f(x_i)$ with a uniform error rate $O((\frac{\log n}{n})^{\frac{1}{d+2}})$ holding with high probability. The first stage involves embedding the points on the unit complex circle, and obtaining denoised estimates of $f(x_i)\bmod 1$ via a $k$NN (nearest neighbor) estimator. The second stage involves a sequential unwrapping procedure which unwraps the denoised mod $1$ estimates from the first stage. \rev{The  estimates of the samples $f(x_i)$ can be subsequently utilized to construct an estimate of the function $f$, with the aforementioned uniform error rate.}

Recently, Cucuringu and Tyagi \cite{CMT18_long} proposed an alternative way of denoising modulo $1$ data which works with their representation on the unit complex circle. They formulated a smoothness regularized least squares problem on the product manifold of unit circles, where the smoothness is measured with respect to the Laplacian of a proximity graph $G$ involving the $x_i$'s. This is a nonconvex quadratically constrained quadratic program (QCQP) hence they proposed solving its semidefinite program (SDP) based relaxation. We derive sufficient conditions under which the SDP is a tight relaxation of the QCQP. Hence under these conditions, the global solution of QCQP can be obtained in polynomial time.
\end{abstract}

%
\section{Introduction} \label{sec:intro}
In many real-life applications, we are often given access to noisy modulo samples of an underlying signal $f: \matR^d \to \matR$, i.e., 
\begin{equation} \label{eq:noisy_mod}
y_i = (f(x_i) + \eta_i)\bmod \zeta; \quad i=1,\dots,n 
\end{equation}
for some $\zeta \in \matR^+$, where $\eta_i$ denotes noise. Here, $a \bmod \zeta \in [0,\zeta)$ is the remainder term so that $a = q \zeta + (a\bmod \zeta)$ for an integer $q$. For example, self-reset analog to digital converters (ADCs) are a new generation of ADCs which handle voltage surges by resetting its value via a modulo operation. In other words, if the voltage signal lies outside the range $[0,\zeta]$, then its value is simply reset by taking its modulo $\zeta$ value \cite{Kester,RHEE03,yamaguchi16}. \rev{In this paper, we analyse a situation where only modulo samples are available and not the reset counts (i.e., the integer $q$).}
Another important application is phase unwrapping where the general idea is to infer the structure of an object by transmitting waveforms, and capturing the phase coherence (measured modulo $2\pi$ radians) between the transmitted and scattered waveforms. This arises for instance in InSAR (synthetic radar aperture interferometry) for estimating the depth map of a terrain (e.g., \cite{graham_insar,zebker86}); MRI, for estimating the position of veins in tissues (e.g., \cite{hedley92,laut73}), and non destructive testing of components (e.g., \cite{paoletti94,hung96}), to name a few applications.
\rev{A main difference between phase unwrapping and moludo samples coming from a self-reset ADC is that folding is deliberately injected in the latter case, while in phase unwrapping problems the data is assumed to be available in the form of modulo $2\pi$ information. }

Given the measurement model in \eqref{eq:noisy_mod}, where we assume from now that $\zeta = 1$, we are interested in \emph{unwrapping} $(y_i)_i$ to recover the original samples $f(x_i)$ for $i=1,\dots,n$. This is clearly only possible up to a global integer shift. Moreover, it is not difficult to see that one needs to make additional structural assumptions on $f$, such as of smoothness (Lipschitz, continuous differentiability etc.). 

Let us first discuss a natural sequential procedure for unwrapping $y_i$ with $f$ assumed to be Lipschitz smooth; the reader is referred to Section~\ref{subsec:unwrap_mod} for details. To begin with, if there is no noise, one can exactly recover the original samples $f(x_i)$ provided $n$ is large enough. To see this, let us consider first the univariate setting with $f:[0,1] \rightarrow \mathbb{R}$, and suppose for simplicity that the $x_i$'s form a uniform grid. If $n$ is large enough w.r.t. the Lipschitz constant of $f$, then the following identity is easy to verify (see Lemma~\ref{lem:itoh_type_cond}) and is reminiscent of the classical Itoh's condition \cite{Itoh_82} from the phase unwrapping literature,
\begin{equation*}  
  f(x_i) - f(x_{i-1}) =  \left\{
\begin{array}{rl}
y_i - y_{i-1} \ ; & \text{ if } \abs{y_i - y_{i-1}} < 1/2, \\
1 + y_i - y_{i-1} \ ; & \text{ if } y_i - y_{i-1} < -1/2, \\
-1 + y_i - y_{i-1} \ ; & \text{ if } y_i - y_{i-1} > 1/2.
\end{array} \right.
\end{equation*}
This directly suggests a simple sequential procedure for recovering the original samples $f(x_i)$ using $y_i$. The above argument can be extended to the noisy setting -- one can show that if $\eta_i \bmod 1 \leq \delta$ for all $i$, then provided $\delta \lesssim 1$ and $n$ is large enough, one can apply the same sequential procedure to obtain estimates $\ftil(x_i)$ such that for some integer $q^{\star}$,
\begin{equation} \label{eq:err_bd_seq_unwrap_intro}
    \abs{\ftil(x_i) + q^{\star} - f(x_i)} \leq \delta, \quad  i=1,\dots,n;
\end{equation}
see Lemma~\ref{lem:err_seq_unwrap}. In fact, perhaps surprisingly, one can generalize the above discussion to the general multivariate setting as well. \rev{In that case, we assume that $n=m^d$ where $d\geq 1$ is the dimension of the problem.} In particular, we show that if $\delta \lesssim 1$ and $n$ is large enough, then there exists a sequential unwrapping procedure (see Algorithm~\ref{algo:seq_unwrap_mult}) that generates estimates \rev{$\ftil(x_\itup)$ satisfying a generalization to $d\geq 1$ of~\eqref{eq:err_bd_seq_unwrap_intro} 
 for all $\itup\in [m]^d$}; see Lemma\rev{s}~\ref{lem:itoh_type_cond_mult},~\ref{lem:err_seq_unwrap_mult}. 

An important takeaway from the above discussion is that one could consider a two stage approach for unwrapping -- first denoise the modulo samples $(y_i)_i$, and then apply the aforementioned unwrapping procedure. Indeed, the hope is that the denoising procedure will lead to estimates of $f(x_i) \bmod 1$ with a uniform error bound much smaller than $\delta$, which in turn will improve the final error estimate for the unwrapped samples on account of \eqref{eq:err_bd_seq_unwrap_intro}. This is also the basis of our first main result for this problem which we now outline. \rev{ Before concluding, we remark that while the focus of the above discussion was on recovering the samples $f(x_i)$, we will show in Section \ref{subsec:estim_f} that one can subsequently construct an estimate of the function $f$ (using the recovered samples) via classical tools from approximation theory.}
%
\subsection{Error rates for unwrapping noisy modulo $1$ data} \label{subsec:err_rate_unwrap}
We derive an algorithm (namely, Algorithm~\ref{algo:Main}) for the problem of unwrapping \rev{ of } the noisy modulo $1$ samples $(y_i)_{i=1}^n$ generated as in \eqref{eq:noisy_mod}. The algorithm \rev{ consists of two stages}. In the first stage, we obtain denoised modulo samples by performing a $k$NN ($k$ nearest neighbor) regression procedure. Specifically, this is done by embedding the mod $1$ data \rev{onto} the unit circle as $z_i = \exp(\iota 2\pi y_i)$ for each $i$, and by then performing a $k$NN estimation in this space (see Algorithm~\ref{algo:kNNregression}). Assuming $\eta_i \sim \calN(0,\sigma^2)$ to be i.i.d. Gaussian, and the $x_i$'s forming a uniform grid in $[0,1]^d$, this results in denoised estimates of $f(x_i) \bmod 1$ satisfying, with high probability, a uniform error bound (w.r.t the wrap around metric) of $O((\frac{\log n}{n})^{\frac{1}{d+2}})$. Then\rev{, in the second stage,} feeding the denoised mod 1 samples to the multivariate unwrapping procedure in Algorithm~\ref{algo:seq_unwrap_mult}, the same error rate carries over for the unwrapped estimates, i.e., $\delta = O((\frac{\log n}{n})^{\frac{1}{d+2}})$ in \eqref{eq:err_bd_seq_unwrap_intro}. We outline this below in the form of the following informal Theorem; the full result is in Theorem~\ref{thm:main_thm_unwrap_mod1} and is completely non-asymptotic.
\begin{theorem}
\rev{Let $n = m^d$ and assume the $x_\itup$'s form a uniform grid in $[0,1]^d$ for $\itup \in [m]^d$.}
In the model \eqref{eq:noisy_mod} with $\zeta = 1$, suppose $\eta_\itup \sim \calN(0,\sigma^2)$ i.i.d and $f:[0,1]^d \rightarrow \mathbb{R}$ is Lipschitz continuous w.r.t the $\ell_{\infty}$ norm. Let $\sigma \leq \frac{1}{2\pi}$. Then, if $n$ is large enough, the estimates \rev{ $\ftil(x_\itup)$} obtained from Algorithm~\ref{algo:Main} satisfy, with high probability, the uniform error bound
\begin{equation} \label{eq:disc_linfty_err_intro}
    \abs{\ftil(\rev{x_\itup}) + q^{\star} - f(\rev{x_\itup})} = O\left(\left(\frac{\log n}{n}\right)^{\frac{1}{d+2}} \right), \quad  \rev{\itup\in [m]^d};
\end{equation}
for some $q^{\star} \in \mathbb{Z}.$
\end{theorem}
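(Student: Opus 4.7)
The plan is to prove the theorem by analyzing the two stages of Algorithm~\ref{algo:Main} separately. Stage~1 produces denoised mod~1 estimates $\util_\itup$ via $k$NN regression on the unit-circle embedding $z_\itup = \exp(\rmi 2\pi y_\itup)$, while Stage~2 feeds these into the multivariate sequential unwrapping of Algorithm~\ref{algo:seq_unwrap_mult}. The final bound will follow by invoking the multivariate analogue of \eqref{eq:err_bd_seq_unwrap_intro} (Lemma~\ref{lem:err_seq_unwrap_mult}) with $\delta$ chosen to match the uniform denoising error from Stage~1, and showing that this $\delta$ is small enough to satisfy the Itoh-type condition of Lemma~\ref{lem:itoh_type_cond_mult} when $n$ is large.

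For the Stage~1 analysis, I would fix a grid point $x_\itup$ and write $\est{z}_\itup$ as the average over its $k$ nearest neighbors $z_\jtup$ (in the $\ell_\infty$ sense on the grid), then decompose $\est{z}_\itup - \exp(\rmi 2\pi f(x_\itup))$ into a bias and a fluctuation term. Using $\expec[z_\jtup] = \exp(\rmi 2\pi f(x_\jtup))\exp(-2\pi^2 \sigma^2)$ together with Lipschitz continuity of $f$, the bias is bounded by a Lipschitz contribution of order $(k/n)^{1/d}$ plus the multiplicative shrinkage $\exp(-2\pi^2\sigma^2)$, which is bounded away from zero under $\sigma \le 1/(2\pi)$ and does not rotate the phase. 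The fluctuation term $\frac{1}{k}\sum_\jtup (z_\jtup - \expec[z_\jtup])$ is controlled by applying Hoeffding's inequality on the real and imaginary parts, using $|z_\jtup|=1$; a union bound over the $n$ grid points then yields a uniform bound of order $\sqrt{(\log n)/k}$ with high probability. Extracting the argument of $\est{z}_\itup$ produces an estimate $\util_\itup$ of $f(x_\itup) \bmod 1$ satisfying, in the wrap-around metric and uniformly in $\itup$, an error of order $(k/n)^{1/d} + \sqrt{(\log n)/k}$.

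Balancing these two terms gives the optimal choice $k$ of order $n^{2/(d+2)}(\log n)^{d/(d+2)}$ and the resulting uniform Stage~1 error $\delta = O((\log n/n)^{1/(d+2)})$. For $n$ large enough, this $\delta$ is small enough for the Itoh-type condition in Lemma~\ref{lem:itoh_type_cond_mult} to hold at every grid edge, so Lemma~\ref{lem:err_seq_unwrap_mult} transfers the bound $\delta$ to the unwrapped samples $\ftil(x_\itup)$ up to a global integer shift $q^\star$, which is exactly \eqref{eq:disc_linfty_err_intro}. The main technical obstacle is the fluctuation step of Stage~1: concentration of $\est{z}_\itup$ in the complex plane has to be converted into concentration of its argument in the wrap-around metric, and this translation is only stable because $\sigma \le 1/(2\pi)$ keeps $|\expec[\est{z}_\itup]|$ bounded away from zero; a secondary subtlety is the boundary behaviour of $k$NN neighborhoods near the faces of $[0,1]^d$, which slightly perturbs constants in the bias bound but does not affect the rate.
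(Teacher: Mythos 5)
Your proposal follows essentially the same route as the paper: a bias--variance decomposition of the circle-valued $k$NN estimator (Proposition~\ref{Proposition:ExpectedRisk}, Theorem~\ref{Thm:whp_l_infty}), a union bound over the $n$ grid points, optimization of $k$ to balance the two terms (Corollary~\ref{cor:knn_insample_rates}), conversion from chordal to wrap-around distance (Fact~\ref{Prop:Projection}, Fact~\ref{Fact:Wrap-around}), and finally the multivariate Itoh-type lemma (Lemma~\ref{lem:itoh_type_cond_mult}) feeding Lemma~\ref{lem:err_seq_unwrap_mult}, exactly as the paper combines them in Theorem~\ref{thm:main_thm_unwrap_mod1}. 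The only genuine deviation is that you invoke Hoeffding's inequality on real and imaginary parts while the paper uses Bernstein's. Both give a variance term of order $\sqrt{\log n/k}$ after the union bound, so the rate $(\log n/n)^{1/(d+2)}$ is unaffected; the paper's Bernstein route merely yields a slightly sharper $\sigma$-dependence (via $S^2=e^{4\pi^2\sigma^2}-1\asymp\sigma^2$) at the cost of an extra $\log n/k$ boundedness term. Your remark that the shrinkage factor $e^{-2\pi^2\sigma^2}$ does not rotate the phase and must stay bounded away from zero for the argument-extraction step to be stable is the correct intuition behind the normalization $\widetilde{h}_k = e^{2\pi^2\sigma^2}h_k$ and the hypothesis $\sigma\le 1/(2\pi)$ in the paper, even though the paper phrases that hypothesis mainly as a device for cleaner constants ($e^x\le 1+2x$).
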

%
The rate $\left(\frac{\log n}{n}\right)^{\frac{1}{d+2}}$ is the well known minimax optimal rate for estimating a Lipschitz function in the $L_{\infty}$ norm over the cube $[0,1]^d$ (using a uniform grid), see for e.g., \cite[Theorem 1.3.1]{nemirovski2000topics}. While we defer a detailed discussion with existing work to the end of the paper, we remark that such a result has so far been elusive in the literature for the modulo measurement model in \eqref{eq:noisy_mod}.  
\rev{\paragraph{Estimating $f$.} Once we have the estimates $\ftil(x_\itup)$ on a uniform grid in $[0,1]^d$, it is straightforward to construct an estimate $\est{f}$ of the function $f$, using the recovered samples $\ftil(x_\itup)$. Indeed, we show in Section \ref{subsec:estim_f} that this can be accomplished via quasi-interpolant operators which are classical tools from approximation theory. In particular, we show in Theorem \ref{thm:main_unwrap_err_f} therein that the uniform error bound in \eqref{eq:disc_linfty_err_intro} directly implies the same $L_{\infty}$ error rate between $\est{f}$ and $f$, i.e., 
\begin{equation*}
    \norm{\est{f} + q^* - f}_{\infty} = O\left(\left(\frac{\log n}{n}\right)^{\frac{1}{d+2}} \right).
\end{equation*}
Hence we can estimate Lipschitz continuous functions from their noisy modulo samples (on a uniform grid in $[0,1]^d$) at the optimal $L_{\infty}$ rate.
}

\subsection{Tightness of SDP formulation for denoising modulo $1$ data} \label{subsec:tightness_SDP_intro}
In the previous section, observe that the denoising of the modulo $1$ samples was performed by representing the samples $y_i$ on the unit complex circle (denoted $\mathbb{T}_1$) as $z_i = \exp(\iota 2 \pi y_i)$, with $z = (z_1,\dots,z_n) \in \mathbb{T}_n$. Here, $\mathbb{T}_n$ is the product manifold of $n$ unit complex circles. This representation idea is motivated from a recent paper of Cucuringu and Tyagi \cite{CMT18_long} where they proposed a different scheme for denoising mod 1 samples, which we now describe. 

For a smooth function $f$, we know that $\exp(\iota 2\pi f(x_i)) \approx \exp(\iota 2\pi f(x_j))$ provided $x_i \approx x_j$. Hence,  \cite{CMT18_long} proposed constructing a proximity graph $G$ on the sampling points -- with an edge between $i$ and $j$ provided $x_i$ is close enough to $x_j$ -- and solving the following optimization problem
\begin{equation} 
\min_{g \in \mathbb{T}_n} \norm{g - z}_2^2 + \lambda g^* L g \iff \min_{g \in \bbT_n} \lambda g^* L g - 2\real(g^* z).  \label{prog:qcqp} \tag{$\text{QCQP}$}  
\end{equation}
Here, $L$ is the Laplacian of $G$, and $\lambda \geq 0$ is a regularization parameter that promotes smoothness with respect to $G$. This is a non-convex problem -- albeit with a convex objective -- and it is unclear whether one can efficiently (i.e., in polynomial time) find a global minimizer. Therefore, they considered solving the semidefinite progamming relaxation of \eqref{prog:qcqp}, i.e., 
\begin{align} \label{prog:sdp_relax}
\min_{W \in \mathbb{C}^{(n+1) \times (n+1)}} \Tr(T W) \quad \text{ s.t } \quad W \succeq 0, \ W_{ii} = 1   \tag{$\text{SDP}$}
\end{align}
which is solvable in polynomial time via interior point methods (see for e.g. \cite{Vanden96}). The matrices $T, W $ are defined in \eqref{eq:Tdef} and the steps leading to the formulation \eqref{prog:sdp_relax} are outlined in Section~\ref{subsec:prob_sdp}. As discussed therein, if the solution $X$ of \eqref{prog:sdp_relax} is rank $1$, then it has the form
\begin{eqnarray} \label{eq:X_sol_rank1_intro}
X = \begin{pmatrix}
  \gest \gest^* \quad & \gest \\ 
  \gest^* \quad & 1  
\end{pmatrix} =
\begin{pmatrix} \gest \\ 1 \end{pmatrix}
\begin{pmatrix} \gest^* & 1 \end{pmatrix}
\end{eqnarray}
where $\gest \in \mathbb{T}_n$ is a global solution of \eqref{prog:qcqp}. 

\paragraph{Main result.} Hence an important question is to identify conditions under which \eqref{prog:sdp_relax} is a tight relaxation of \eqref{prog:qcqp}, i.e., its solution is rank $1$, since under those conditions the solution of \eqref{prog:qcqp} would have been obtained in polynomial time. Such an analysis was missing in \cite{CMT18_long} although experimentally, \eqref{prog:sdp_relax} was shown to perform quite well. This brings us to the second main result of this paper where we take a step towards answering this question. It is outlined in the theorem below; for the complete statement, see Theorem~\ref{thm:main_sdp_relax_tight}.
\begin{theorem} \label{thm:main_sdp_relax_intro}
Let $z \in \bbT_n$ be a noisy observation of the ground truth signal $h \in \mathbb{T}_n$ satisfying $\norm{z-h}_{\infty} \leq \delta$. Denote $\Delta$ to be the maximum degree of the graph $G$. Then there exist constants $0 < c_1, c_2 < 1$ such that if $\delta \leq c_1$  and $\lambda \Delta \leq c_2$,
then \eqref{prog:sdp_relax} has a unique solution $X \in \mathbb{C}^{(n+1) \times (n+1)}$ of the form \eqref{eq:X_sol_rank1_intro}.  Consequently, $\gest$ is the unique solution of \eqref{prog:qcqp}.
\end{theorem}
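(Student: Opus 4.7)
My plan is to prove the result via a primal--dual certificate argument for the SDP. The Lagrangian of~\eqref{prog:sdp_relax} introduces a Hermitian dual variable $S \succeq 0$ for the constraint $W \succeq 0$ and real multipliers $\mu \in \matR^{n+1}$ for the $n+1$ diagonal constraints. Stationarity forces $S = T - \diag(\mu)$, so by strong duality and complementary slackness it suffices to exhibit some $\gest \in \bbT_n$ and a real $\mu$ for which, with $v = (\gest^T, 1)^T$ and $W^{\star} = vv^*$, we have $Sv = 0$ and $S \succeq 0$. Writing $Sv = 0$ out block by block pins down the multipliers: the first $n$ entries give $\mu_i = \bar\gest_i\bigl(\lambda(L\gest)_i - z_i\bigr)$, and the last gives $\mu_{n+1} = -z^*\gest$. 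Reality of the first $n$ multipliers is exactly the manifold first-order condition for $\gest$ to be a critical point of~\eqref{prog:qcqp}; summing this condition over $i$ and using that $L$ is Hermitian forces $\gest^*z \in \matR$, so $\mu_{n+1}$ is automatically real as well.

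Next I would construct $\gest$ explicitly as a fixed point. Choosing the branch of the first-order condition corresponding to a local minimum yields $\gest_i = (z_i - \lambda(L\gest)_i)/|z_i - \lambda(L\gest)_i|$, which is well defined when $\lambda\Delta < 1/2$ since $|\lambda(Lg)_i| \leq 2\lambda\Delta$ for every $g \in \bbT_n$. Let $F : \bbT_n \to \bbT_n$ denote the corresponding self-map. A standard Lipschitz estimate for the normalisation $w \mapsto w/|w|$ on the annulus $|w| \geq 1 - 2\lambda\Delta$, combined with $\norm{L(g - g')}_\infty \leq 2\Delta \norm{g - g'}_\infty$, shows that $F$ is a contraction in the $\ell_\infty$ metric provided $\lambda\Delta \leq c_2$ for some absolute constant $c_2 < 1$. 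Banach's fixed point theorem then produces a unique $\gest \in \bbT_n$ with $\norm{\gest - z}_\infty = O(\lambda\Delta)$, and by construction $-\mu_i = |z_i - \lambda(L\gest)_i| \geq 1 - 2\lambda\Delta > 0$ for every $i \leq n$.

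To verify $S \succeq 0$ with a one-dimensional kernel, I would exploit the identity $A\gest = z$, where $A := \lambda L - \diag(\mu_{1:n})$, together with $\gest^*z \in \matR$, to compute for any $w = (u^T, s)^T$
\begin{equation*}
w^* S w \;=\; (u - s\gest)^* A (u - s\gest).
\end{equation*}
This reduces $S \succeq 0$ to $A \succeq 0$ and identifies $\ker S = \{(u^T,s)^T : u - s\gest \in \ker A\}$. The previous step yields $-\diag(\mu_{1:n}) \succeq (1 - 2\lambda\Delta) I$, and combined with $\lambda L \succeq 0$ this gives $A \succeq (1 - 2\lambda\Delta) I \succ 0$, so $\ker A = \{0\}$ and $\ker S = \mathbb{C} v$. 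Because $v$ has unit-modulus entries, the only PSD matrix $W$ with $W_{ii} = 1$ whose range is contained in $\mathbb{C} v$ is $vv^* = W^{\star}$, so $W^{\star}$ is the unique minimizer of~\eqref{prog:sdp_relax}. Any minimizer $\gest'$ of~\eqref{prog:qcqp} would yield a rank-one SDP-feasible matrix achieving the same value, hence coinciding with $W^{\star}$, forcing $\gest' = \gest$.

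The main technical obstacle is the contraction/closeness step: the whole PSD bookkeeping is clean only because $\norm{\gest - z}_\infty = O(\lambda\Delta)$ forces $-\mu_i \geq 1 - 2\lambda\Delta$, and establishing this closeness requires the careful Banach contraction analysis sketched above. The noise hypothesis $\norm{z - h}_\infty \leq \delta$ does not actually drive the SDP tightness argument itself, which is entirely powered by the smallness of $\lambda\Delta$; its role in the full statement is presumably to translate closeness of $\gest$ to $z$ into closeness of $\gest$ to the ground truth $h$, so that the unique SDP solution recovers the underlying signal to the noise level.
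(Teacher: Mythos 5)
Your proof is correct and it takes a genuinely different route from the paper's. The paper starts from a \emph{global minimizer} $\gest$ of \eqref{prog:qcqp}, proves the $\ell_\infty$ stability bound $\|\gest - h\|_\infty^2 \lesssim \delta + \lambda\triangle$ of Theorem~\ref{thm:main_linf_qcqp} by a feasibility/perturbation argument, and then uses Sylvester's law of inertia on the Schur complement $M = \lambda L + D - \frac{zz^*}{z^*\gest}$ (restricted to $\gest^\perp$) to control positivity of $\est{S}$; the dependence on $\delta$ enters because $D_{ii} = \real(\gest_i^* z_i) - \lambda\real(\gest_i^*(L\gest)_i)$ is bounded below via the closeness of $\gest$ to $h$ and of $z$ to $h$. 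You instead \emph{construct} $\gest$ explicitly as a Banach fixed point of $g \mapsto (z - \lambda Lg)/|z - \lambda Lg|$, which forces the sign $D_{ii} = -\mu_i = |z_i - \lambda(L\gest)_i| \geq 1 - 2\lambda\triangle$ \emph{by construction} rather than via the stability bound, and you replace the Schur-complement step with the cleaner completion-of-square identity $w^*\est{S}w = (u - s\gest)^*A(u - s\gest)$, $A = \lambda L - \diag(\mu_{1:n})$, valid because $A\gest = z$ and $\gest^*A\gest = z^*\gest \in \matR$. The resulting argument needs only $\lambda\triangle$ small (roughly $\lambda\triangle < 1/6$ for the contraction, and $< 1/2$ for $A \succ 0$); the condition $\delta \leq c_1$ in the statement is vacuously satisfied by $c_1 = 2$, which reflects the fact that the SDP observes only $z$ and not $h$. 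This makes your route tighter and arguably simpler: it dispenses entirely with Theorem~\ref{thm:main_linf_qcqp} as a prerequisite. The paper's approach has the advantage of producing that $\ell_\infty$ stability bound for $\gest$ w.r.t.\ $h$ as a standalone result (which your argument does not give), and of following more closely the Bandeira--Boumal--Singer template for phase synchronization, but as a proof of tightness of the relaxation, your construction via the fixed point is the more economical one. Two small points worth tidying up in a polished write-up: (i) you do not need the claim that the $+$ branch corresponds to a local minimum of \eqref{prog:qcqp} --- the KKT certificate proves global optimality directly; and (ii) the step ``any minimizer $\gest'$ of \eqref{prog:qcqp} coincides with $\gest$'' should briefly note that uniqueness follows because the lifted matrix $\gest'\gest'^*$ together with the normalized last coordinate must equal $vv^*$ entrywise, which pins down $\gest'$ from the last row of $vv^*$.
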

%
%
The above result is for any graph $G$, and does not make any assumptions on the noise, other th\rev{a}n being uniformly bounded. While in the setup of \cite{CMT18_long}, we have $h_i = \exp(\iota 2\pi f(x_i))$, the framework in which we study the problem is more abstract since it applies to any graph $G$ and does not necessarily assume the model in \eqref{eq:noisy_mod}. Since $\norm{z-h}_{\infty} \leq 2$ is always true, the requirement $\delta \lesssim 1$ is not stringent. On the other hand, one might perhaps intuitively expect that the smoothness of $h$ w.r.t. $G$ should also play an important role as part of the conditions ensuring tightness. While Theorem~\ref{thm:main_sdp_relax_tight} does have a  smoothness parameter $0 \leq \smooth_n \leq 2$ (see \eqref{eq:hsmooth} for definition) appearing in the conditions, the effect is admittedly mild. This is likely due to an artefact of the analysis, and is discussed in detail in Section~\ref{sec:discussion}.
Denoising mod 1 samples thanks to (SDP) is empirically very successful, however this insight is not yet fully reflected by our theoretical understanding.
Although we believe that our result about the SDP tightness is meaningful, we expect that these guarantees can be improved, in particular, under a random noise assumption.  Those prospects are discussed in Section~\ref{subsec:fut_work}.

\paragraph{\rev{Practical considerations}.} 
\rev{The \eqref{prog:qcqp} problem is particularly natural. Its SDP relaxation discussed here can be solved thanks to the Burer-Monteiro approach (see \cite{CMT18_long}) which typically scales better with the data set size compared with interior point methods. Compared with the $k$NN approach, we expect the SDP problem to be more robust to large noise values as it is already discussed in the context of angular synchronization~\cite{SyncRank}. Clearly, the $k$NN approach is a simpler and faster strategy which showed good performance in our numerical experiments. }
\subsection{Notation and outline of paper}
We now discuss the notation used throughout, followed by the outline of the rest of the paper.
\paragraph{Notation.}
We will denote $[n] = \{1,\dots , n\}$, and $\rmi = \sqrt{-1}$ to be the imaginary unit. The symbol $$\mathbb{T}_n := \set{u \in \mathbb{C}^n: \abs{u_i} = 1; \ i=1,\dots,n}$$
is the product manifold of unit radius circles, 
i.e., $\mathbb{T}_n = \mathbb{T}_1 \times \cdots \times \mathbb{T}_1$. For $u \in \mathbb{C}$, we define a projection on $\mathbb{T}_n$ as 
\begin{equation*}
    \left(\frac{u}{|u|}\right)_i = \begin{cases}
    \frac{u_i}{|u_i|} \text{ if } u_i\neq 0,  \\
    1 \text{ otherwise, }
    \end{cases}
\end{equation*}
for all  $i\in[n]$, and also define the angle $\arg(u)\in [0,2\pi)$ such that $u= |u| \exp(\iota \arg(u))$. Denote $d_w:[0,1) \rightarrow [0,1/2]$ to be the usual wrap around metric defined as
\begin{equation*}
 d_w(t,t') := \max \set{\abs{t-t'}, 1-\abs{t-t'}}.
\end{equation*}
For a vector $x \in \mathbb{C}^n$ and any $1 \leq p \leq \infty$, $\norm{x}_p$ denotes the usual $\ell_p$ norm of $x$. 
We say that a function $f: [0,1]^d \to \mathbb{C}$ is $\Lip$-Lipschitz if there exists a constant $M > 0$ such that $|f(x)-f(y)|\leq \Lip\|x-y\|_\infty$ for all $x,y \in [0,1]^d$. Moreover, the $L_{\infty}$ norm of $f$ is defined as $\|f\|_\infty := \sup_{x\in [0,1]^d}|f(x)|$.
For non-negative numbers $a,b$, we write $a \lesssim b$ if there exist a constant $C > 0$ such that $a \leq C b$. Furthermore, we write $a\asymp b$ if $a \lesssim b$ and $b \lesssim a$. Finally, we also denote by $\circ$ the usual Hadamard product.
\paragraph{Outline of paper.}  Section~\ref{sec:knn_analysis} contains the analysis for the unwrapping problem, culminating with \rev{Theorems \ref{thm:main_thm_unwrap_mod1} and \ref{thm:main_unwrap_err_f} which are our main results for this problem.} Section~\ref{sec:sdp_analysis} derives sufficient conditions under which  \eqref{prog:sdp_relax} is a tight relaxation of \eqref{prog:qcqp}, with Theorem~\ref{thm:main_sdp_relax_tight} being our main result for this problem. Section~\ref{sec:sims} contains some numerical simulations, and we conclude with a discussion with related work along with directions for future work in Section~\ref{sec:discussion}.

%
\section{Denoising and unwrapping via $k$NN regression} \label{sec:knn_analysis}
In this section, we introduce and analyze an algorithm for robustly unwrapping noisy mod $1$ samples of a Lipschitz function. We begin by formally outlining the problem setup. 
\subsection{Problem setup \label{sec:prob_setup}}
Let $f:[0,1]^d\to \mathbb{R}$ be an unknown $\Lip$-Lipschitz function. Let the circle-valued function $h:[0,1]^d\to \bbT_1$ be given as $h(x) = \exp(\rmi 2\pi f(x))$. 
\begin{fact}\label{Fact:Lipschiz}
The function  $h:[0,1]^d \to \bbT_1$ be given as $h(x) = \exp(\rmi 2\pi f(x))$ is $2\pi \Lip$-Lipschitz.
\end{fact}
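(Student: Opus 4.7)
The plan is to reduce the claim to the standard fact that the map $t \mapsto \exp(\iota t)$ from $\mathbb{R}$ to $\mathbb{T}_1$ is $1$-Lipschitz, and then chain this with the Lipschitz property of $f$.

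First I would record the elementary inequality $|\exp(\iota a) - \exp(\iota b)| \leq |a - b|$ for all $a,b \in \mathbb{R}$. This can be justified either via the trigonometric identity $|\exp(\iota a) - \exp(\iota b)| = 2|\sin((a-b)/2)|$ combined with $|\sin t| \leq |t|$, or by noting that $\exp(\iota a) - \exp(\iota b) = \iota \int_b^a \exp(\iota s)\, ds$ and bounding the integral in absolute value using $|\exp(\iota s)| = 1$.

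Then, for arbitrary $x, y \in [0,1]^d$, I would apply this inequality with $a = 2\pi f(x)$ and $b = 2\pi f(y)$ to obtain
\begin{equation*}
|h(x) - h(y)| = |\exp(\iota 2\pi f(x)) - \exp(\iota 2\pi f(y))| \leq 2\pi |f(x) - f(y)|.
\end{equation*}
Combining with the hypothesis that $f$ is $M$-Lipschitz with respect to the $\ell_\infty$ norm yields
\begin{equation*}
|h(x) - h(y)| \leq 2\pi M \|x - y\|_\infty,
\end{equation*}
which is the desired conclusion.

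There is essentially no obstacle here: the proof is a one-line composition once the $1$-Lipschitz property of $t \mapsto \exp(\iota t)$ is in hand. The only subtlety worth flagging is that the Lipschitz constant is measured with respect to the Euclidean metric on the codomain $\mathbb{T}_1 \subset \mathbb{C}$ (i.e., chordal distance), not arclength; if arclength were used instead, the constant would simply be the same $2\pi M$ by essentially the same argument, so the choice of metric on $\mathbb{T}_1$ does not complicate matters.
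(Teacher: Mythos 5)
Your proof is correct and follows essentially the same argument as the paper: the paper directly writes $|h(x)-h(y)| = 2|\sin(\pi(f(x)-f(y)))| \leq 2\pi|f(x)-f(y)| \leq 2\pi M \|x-y\|_\infty$, which is exactly your trigonometric-identity route specialized to $a = 2\pi f(x)$, $b = 2\pi f(y)$. Factoring out the $1$-Lipschitz property of $t \mapsto \exp(\iota t)$ as a separate step is a minor organizational difference, not a different approach.
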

\begin{proof}
We have 
 $|h(x)-h(y)| = 2|\sin[\pi (f(x)-f(y))]|\leq 2\pi|f(x)-f(y)|\leq 2\pi \Lip \|x-y\|_\infty,$
 for all $x,y\in[0,1].$
\end{proof}
We consider $n$ datapoints  on a uniform grid $\calX$ of points  $x_{\itup} = (x_{i_1},\dots,x_{i_d}) \in [0,1]^d$  indexed by the $d$-tuple $\itup = (i_1,\dots,i_d) \in [m]^d$, where $x_{i_j} = \frac{i_j -1}{m-1}$.
We assume that we have noisy versions of $f(x_\itup)$ modulo $1$, that is, $y_\itup = \left( f(x_\itup) + \eta_\itup \right) \mod 1$ where $\eta_\itup \sim \calN(0,\sigma^2)$ i.i.d. These noisy modulo $1$ samples are mapped to the complex circle as 
\begin{equation}
z_\itup = \exp(\rmi 2\pi y_{\itup}) =  h_\itup \exp(\rmi 2\pi \eta_\itup), 
\label{eq:z}
\end{equation}
where, for simplicity, we write $h_\itup = h(x_\itup)$.
The following simple fact will be used extensively in our analysis.
\begin{fact}\label{lemma:ExpZi}
We have $\mathbb{E} [z_\itup] = e^{-2\pi^2\sigma^2} h_\itup$ for all $\itup\in [m]^d$.
\end{fact}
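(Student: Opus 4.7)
The plan is to exploit the fact that $h_\itup$ is deterministic in order to pull it outside the expectation, and then recognize the remaining quantity as the characteristic function of a centered Gaussian evaluated at $t = 2\pi$. Concretely, starting from the definition $z_\itup = h_\itup \exp(\rmi 2 \pi \eta_\itup)$ in \eqref{eq:z} and using linearity of expectation together with the fact that only $\eta_\itup$ is random, I would write $\mathbb{E}[z_\itup] = h_\itup \, \mathbb{E}[\exp(\rmi 2\pi \eta_\itup)]$.

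The second step is to invoke the standard identity for the characteristic function of a centered Gaussian: if $\eta \sim \calN(0, \sigma^2)$, then $\mathbb{E}[\exp(\rmi t \eta)] = \exp(-t^2 \sigma^2 / 2)$ for every $t \in \matR$. This is a textbook result, provable by completing the square inside the Gaussian integral (or by a standard contour-shift argument). Applying it with $t = 2\pi$ yields $\mathbb{E}[\exp(\rmi 2\pi \eta_\itup)] = \exp(-2\pi^2 \sigma^2)$.

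Combining the two steps gives the claim $\mathbb{E}[z_\itup] = e^{-2\pi^2 \sigma^2} h_\itup$. There is no real obstacle here, as this is a one-line computation once the characteristic function formula is recalled; I would include it only for completeness, since the identity is used repeatedly in the subsequent analysis of the $k$NN denoising stage.
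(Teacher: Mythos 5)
Your proof is correct and matches the paper's one-line argument; the paper appeals to the "moment generating function of a normal distribution," which (evaluated at an imaginary argument) is precisely the characteristic function identity you invoke. The only difference is cosmetic terminology — you call it the characteristic function, which is arguably more precise — but the computation is the same.
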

\begin{proof}
This follows from the moment generating function of a normal \rev{distribution}.
\end{proof}
Our goal is to obtain estimates of the samples $f(x_{\itup})$, namely $\ftil(x_{\itup})$, such that for some integer $q^{\star} \in \mathbb{Z}$, $\abs{\ftil(x_{\itup}) + q^{\star} - f(x_{\itup})}$ is ``small'' for all $\itup \in [m]^d$. To this end, we propose a two-stage strategy outlined formally as Algorithm~\ref{algo:Main}. 
\begin{enumerate}
    \item In the first stage, we consider a $k$ nearest neighbors ($k$NN) regression scheme applied to the noisy samples $z_{\itup}$. The purpose of this stage is to produce denoised mod 1 estimates $\est{g}(x_{\itup}) \in [0,1)$ for all $\itup \in [m]^d$. This is outlined as Algorithm~\ref{algo:kNNregression} and analyzed in Section~\ref{sec:DenoiseViakNN}.
    
    \item The second stage involves a sequential unwrapping procedure which takes the denoised estimates $\est{g}(x_{\itup})$ as input, and outputs the final unwrapped estimates $\ftil(x_{\itup})$ for all $\itup \in [m]^d$. This is outlined as Algorithm~\ref{algo:seq_unwrap_mult} and analyzed in Section~\ref{subsec:unwrap_mod}.
\end{enumerate}
\rev{In Section \ref{subsec:main_res_unwrap_samps}, we put together our results from the preceding sections to derive bounds on $\abs{\ftil(x_{\itup}) + q^{\star} - f(x_{\itup})}$, holding uniformly for each $x_\itup \in \calX$ (see Theorem \ref{thm:main_thm_unwrap_mod1}). In Section \ref{subsec:estim_f}, we will describe how the recovered estimates $\ftil(x_\itup)$ can be used to obtain an estimate $\est{f}$ of the function $f$ via quasi-interpolant operators. In particular, the error rate (on the grid) in Theorem \ref{thm:main_thm_unwrap_mod1} is shown to carry forward for $\norm{\est{f} + q^* - f}_{\infty}$ as well (see Theorem \ref{thm:main_unwrap_err_f}).}

\subsection{Denoising mod 1 samples via $k$NN regression \label{sec:DenoiseViakNN}}
Our $k$NN scheme for denoising the modulo samples is outlined in Algorithm~\ref{algo:kNNregression}. Before proceeding with its analysis, it will be useful to introduce some preliminaries for the $k$NN estimator. 
%
%
%
\begin{algorithm*}[!ht]
\caption{Denoising modulo samples with $k$NN regression} \label{algo:kNNregression} 
\begin{algorithmic}[1] 
\State \textbf{Input:} integer $k>0$ and uniform grid $\calX \subset [0,1]^d$, $\abs{\calX} = n = m^d$; noisy modulo samples $y_\itup = (f(x_\itup)+\eta_\itup) \mod 1$ for $\itup\in [m]^d$.
\State \textbf{Output:} denoised modulo samples $\est{g}(x_\itup) \in [0,1)$  for all $\itup\in [m]^d$.
\State Compute $z_\jtup = \exp(\rmi 2\pi y_\jtup)$ for all $\jtup\in [m]^d$.
\For{$\jtup\in [m]^d$}
\State Compute $h_k(x_\jtup) = \frac{1}{k}\sum_{\itup: x_\itup \in \mathcal{N}_k(x_\jtup)} z_\itup$ and  normalize $\est{h}_k(x_\jtup) = \frac{h_k(x_\jtup)}{|h_k(x_\jtup)|}$.
\State $\est{g}(x_\jtup) = \frac{1}{2\pi}\arg\big(\est{h}_k(x_\jtup)\big)$.
\EndFor
\end{algorithmic}
\end{algorithm*}
%
%
%
\paragraph{$k$NN estimator.}
Let $x\in [0,1]^d$ and $ k$ be a strictly positive integer. Then, we define the $k$NN radius of $x$ as the smallest distance such that a $\ell_\infty$ ball centered at $x$ contains at least $k$ neighbours, namely  $r_k(x) = \inf\{ r: |B(x,r)\cap \calX|\geq k\}$ where $B(x,r) = \{x'\in[0,1]^d:\|x-x'\|_\infty \rev{\leq} r\}$. Hence, the $k$NN set of $x$ is simply $\mathcal{N}_k(x)= B(x,r_k(x))\cap \calX$.
We are ready to introduce the $k$NN estimator 
$$
\est{h}_k(x) = \frac{h_k(x)}{|h_k(x)|} \text{ with } h_k(x) = \frac{1}{|\mathcal{N}_k(x)|}\sum_{\itup: x_\itup \in \mathcal{N}_k(x)} z_\itup.
$$
Notice that $\est{h}_k(x)$ does not depend on the normalization of $h_k(x)$.
Hence, we introduce $\widetilde{h}_k(x) = e^{2\pi^2\sigma^2}h_k(x)$ where
by construction  $\mathbb{E}[\widetilde{h}_k(x)] = \frac{1}{|\mathcal{N}_k(x)|}\sum_{\itup: x_\itup \in \mathcal{N}_k(x)} h_\itup$ holds thanks to Fact~\ref{lemma:ExpZi}. 

%
\paragraph{Statistical guarantees.} In order to obtain statistical guarantees for the $k$NN regressor, we first derive an expression for the $k$NN radius on a grid which essentially allows for bounding the bias of our estimator. 
\begin{lemma}\label{lemma:kNN}
With the notations defined above, it holds that
$
\sup_{x\in [0,1]^d}r_k(x)= \frac{\lceil k^{1/d}\rceil -1}{m-1}.
$
\end{lemma}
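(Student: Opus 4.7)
The strategy is to identify the worst-case point for the $k$NN radius on the grid. I would claim that the supremum is attained at any corner of $[0,1]^d$, say $x = 0$, because the $\ell_\infty$-ball is maximally truncated by the domain boundary there and must therefore be dilated more to capture $k$ grid points. Setting $K := \lceil k^{1/d} \rceil$ and $r^* := (K-1)/(m-1)$, the proof splits into two parts: evaluating $r_k(0)$ and bounding $r_k(x)$ from above uniformly in $x$.

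First, I would compute $r_k(0)$ directly. For $r \in [0,1]$, the ball $B(0,r) \cap [0,1]^d$ equals the axis-aligned cube $[0,r]^d$. In each coordinate, $[0,r]$ contains exactly the grid points $\frac{i-1}{m-1}$ with $1 \leq i \leq \lfloor (m-1)r\rfloor + 1$, so the total number of grid points inside $B(0,r)$ is $\bigl(\lfloor (m-1)r\rfloor + 1\bigr)^d$. Imposing that this count be at least $k$ and exploiting that $\lfloor (m-1)r\rfloor + 1$ is a non-negative integer, one reads off $\lfloor (m-1)r\rfloor + 1 \geq K$, that is, $r \geq r^*$. Both directions give $r_k(0) = r^*$.

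Next, I would show that $r_k(x) \leq r^*$ for every $x \in [0,1]^d$, by verifying that $B(x, r^*)$ always contains at least $K^d \geq k$ grid points. Since the ball factors across coordinates, it suffices to show that in each direction the 1D set $[x_j - r^*, x_j + r^*] \cap [0,1]$ contains at least $K$ grid points. By the reflection symmetry around $1/2$, one may assume $x_j \leq 1/2$. If $x_j \leq r^*$, the interval includes $[0, x_j + r^*] \supseteq [0, r^*]$, which contains $K$ grid points by the corner computation above. If instead $r^* < x_j \leq 1/2$, the interval has length $2r^* = 2(K-1)/(m-1)$, and an interval of this length on a grid with spacing $1/(m-1)$ contains at least $2(K-1) \geq K$ grid points (for $K \geq 2$). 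Combining the two parts yields the claimed identity.

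The main obstacle I anticipate is purely bookkeeping in the upper-bound step: handling the boundary truncation cleanly without getting lost in off-by-one counts. The guiding principle, which I would invoke throughout, is that sliding an interval of fixed length towards a boundary of $[0,1]$ can only weakly decrease the count of grid points it captures, so the anchored-at-the-boundary configuration (i.e.\ the corner $x = 0$) is the genuine worst case.
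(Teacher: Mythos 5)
Your argument takes the same core approach as the paper (anchor at the corner $x = 0$ and compute the radius there), but it is considerably more complete: you verify $r_k(0) = r^*$ in both directions, and — crucially — you supply the uniform upper bound $r_k(x) \leq r^*$ for all $x$ via the per-coordinate factorization, a step the paper's proof merely asserts by saying the supremum ``is attained at a corner.'' One caveat that your parenthetical ``for $K \geq 2$'' already hints at: when $k = 1$, so $K = 1$ and $r^* = 0$, the upper-bound step fails for $x$ off the grid (a midpoint of a grid cell has $r_1(x) = \tfrac{1}{2(m-1)} > 0$), so the stated equality over the continuum $[0,1]^d$ actually requires $k \geq 2$; this is an issue with the lemma as written rather than with your proof, and the paper only ever invokes the lemma at grid points where it is harmless.
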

\begin{proof}
The supremum can be attained at several $x\in [0,1]^d$, in particular, it is attained at a corner of the hyper-cube $[0,1]^d$, say $x = 0$ to fix the ideas. Let a sub-cube be positioned at $0$ so that each of its edges contains
$\ceil{k^{1/d}}$ grid points. This means that this hypercube contains at least $k$ grid points and that the length of its edge is $c = \frac{\ceil{k^{1/d}} -1}{m-1}$. This cube is included in a $\ell_{\infty}$ ball centered at $0$ and of radius $c$ which completes the proof.
\end{proof}
An upper bound on the pointwise expected risk follows readily from  Lemma~\ref{lemma:kNN}. This result is given in Proposition~\ref{Proposition:ExpectedRisk}, which displays a classical bias-variance trade-off in terms of the number of neighbours $k$.
\begin{proposition}[Pointwise expected risk] \label{Proposition:ExpectedRisk}
Let $x\in [0,1]^d$. If $\sigma \leq \frac{1}{2\pi}$ and if $n\geq 2^d$, we have
 $\mathbb{E} \left|\est{h}_k(x)-h(x)\right|^2\leq   64\pi^2 \Lip^2 \left(\frac{k}{n}\right)^{2/d} + \frac{32\pi^2\sigma^2}{k}.$
\end{proposition}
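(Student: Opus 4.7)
The plan is to carry out a standard bias--variance decomposition, but applied to the rescaled estimator $\widetilde{h}_k(x) = e^{2\pi^2\sigma^2}h_k(x)$ (which is unbiased for the local average of the $h_\itup$ by Fact~\ref{lemma:ExpZi}), and then to transfer the bound back to the normalized estimator $\est{h}_k(x)$ via a projection argument. First, I would observe that $\est{h}_k(x) = \tilde{h}_k(x)/|\tilde{h}_k(x)|$ is the metric projection of $\tilde{h}_k(x)$ onto $\bbT_1$, so for any point $v \in \bbT_1$ the triangle inequality gives
\begin{equation*}
\bigl|\est{h}_k(x) - v\bigr| \;\leq\; \bigl|\est{h}_k(x) - \tilde{h}_k(x)\bigr| + \bigl|\tilde{h}_k(x) - v\bigr| \;=\; \bigl||\tilde{h}_k(x)| - 1\bigr| + \bigl|\tilde{h}_k(x)-v\bigr| \;\leq\; 2\bigl|\tilde{h}_k(x) - v\bigr|.
\end{equation*}
Taking $v = h(x)$ and squaring yields $\mathbb{E}|\est{h}_k(x)-h(x)|^2 \leq 4\,\mathbb{E}|\tilde{h}_k(x) - h(x)|^2$, which I then split into squared bias plus variance.

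For the bias, Fact~\ref{lemma:ExpZi} gives $\mathbb{E}[\tilde{h}_k(x)] = \frac{1}{k}\sum_{\itup : x_\itup \in \mathcal{N}_k(x)} h_\itup$, so by Fact~\ref{Fact:Lipschiz} and Lemma~\ref{lemma:kNN},
\begin{equation*}
\bigl|\mathbb{E}[\tilde{h}_k(x)] - h(x)\bigr| \;\leq\; \frac{1}{k}\sum_{\itup \in \mathcal{N}_k(x)} |h_\itup - h(x)| \;\leq\; 2\pi M\, r_k(x) \;\leq\; 2\pi M\,\frac{\lceil k^{1/d}\rceil - 1}{m-1}.
\end{equation*}
Using $m = n^{1/d}$ together with $n \geq 2^d$ (which gives $n^{1/d}-1 \geq n^{1/d}/2$), this simplifies to $4\pi M (k/n)^{1/d}$, so the squared bias is bounded by $16\pi^2 M^2 (k/n)^{2/d}$. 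Multiplying by the factor $4$ from the projection step recovers the first term $64\pi^2 M^2(k/n)^{2/d}$ in the claim.

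For the variance, I would use independence of the $z_\itup$ (since the $\eta_\itup$ are i.i.d.) and the fact that $|z_\itup|=1$, so $\mathrm{Var}(z_\itup) = 1 - e^{-4\pi^2\sigma^2}$. This gives
\begin{equation*}
\mathbb{E}\bigl|\tilde{h}_k(x) - \mathbb{E}\tilde{h}_k(x)\bigr|^2 \;=\; \frac{e^{4\pi^2\sigma^2}}{k^2}\sum_{\itup \in \mathcal{N}_k(x)} \mathrm{Var}(z_\itup) \;=\; \frac{e^{4\pi^2\sigma^2} - 1}{k}.
\end{equation*}
Since $\sigma \leq 1/(2\pi)$ implies $4\pi^2\sigma^2 \leq 1$, convexity of $e^x-1$ on $[0,1]$ gives $e^{4\pi^2\sigma^2} - 1 \leq (e-1)\cdot 4\pi^2\sigma^2 \leq 8\pi^2\sigma^2$, so the variance is at most $8\pi^2\sigma^2/k$. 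Multiplying by $4$ yields the second term $32\pi^2\sigma^2/k$, and the proof concludes by summing the two contributions. The only delicate steps are the projection bound (providing the factor of $2$ that leads to the constant $64$) and the passage from $r_k(x)$ to $(k/n)^{1/d}$, which is exactly where the assumption $n \geq 2^d$ is used; neither is truly an obstacle.
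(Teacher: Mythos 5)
Your proof is correct and follows essentially the same route as the paper's: the projection bound $|\est{h}_k(x)-h(x)|\leq 2|\widetilde{h}_k(x)-h(x)|$ (Fact~\ref{Prop:Projection}, which you re-derive inline), the bias--variance split for the rescaled estimator $\widetilde{h}_k$, the Lipschitz bound on the bias via Lemma~\ref{lemma:kNN} with $m-1\geq m/2$, the exact variance $\frac{e^{4\pi^2\sigma^2}-1}{k}$, and the linearization $e^{4\pi^2\sigma^2}-1\leq 8\pi^2\sigma^2$. The only cosmetic differences are that you justify the last inequality by convexity of $e^x-1$ rather than citing $e^x\leq 1+2x$, and you compute the variance via $\mathrm{Var}(z_\itup)=1-e^{-4\pi^2\sigma^2}$ before rescaling, which gives the same result.
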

\begin{proof}
Thanks to Fact~\ref{Prop:Projection}, we have the following inequality
$
|\est{h}_k(x)-h(x)|\leq 2|\widetilde{h}_k(x)-h(x)|,
$
so that we upper bound only $|\widetilde{h}_k(x)-h(x)|$.
Classically, we have the bias/variance splitting
$$
\mathbb{E} \left|\widetilde{h}_k(x)-h(x)\right|^2= \underbrace{\mathbb{E} \left|\widetilde{h}_k(x)-\mathbb{E} [\widetilde{h}_k(x)]\right|^2}_{\text{variance}}+\underbrace{\left|\mathbb{E} [\widetilde{h}_k(x)]-h(x)\right|^2}_{\text{bias}^2}.
$$
Then, by using Fact~\ref{Fact:Lipschiz}, we have $\left|h(x_\itup) -h(x)\right|\leq  2\pi \Lip \left\|x_\itup -x\right\|_\infty$, and therefore, $\text{bias}^2\leq 4\pi^2 \Lip^2 r_k(x)^2$. By using Lemma~\ref{lemma:kNN}, we find
\[
\sup_{x\in [0,1]^d}r_k(x)= \frac{\lceil k^{1/d}\rceil -1}{m-1}\leq 2 \frac{k^{1/d}}{m}
\]
for $m\geq 2$ where we used that $1/(m-1)\leq 2/m$.
The variance can be exactly computed as follows
\begin{align*}
\mathbb{E} \left|\widetilde{h}_k(x)-\mathbb{E} [\widetilde{h}_k(x)]\right|^2 = \frac{1}{|\mathcal{N}_k(x)|^2}\sum_{\itup: x_\itup \in \mathcal{N}_k(x)}\mathbb{E} \left| \frac{z_\itup}{e^{-2\pi^2\sigma^2}} -h_\itup\right|^2
 = \frac{e^{4\pi^2\sigma^2}-1}{|\mathcal{N}_k(x)|},
\end{align*}
where we used again the formula of moment generating function of a normal \rev{distribution}.
Next, we use the inequality $e^x \leq 1+2x$ if $x\leq 1$. This gives
$ e^{4\pi^2\sigma^2}-1\leq 8\pi^2\sigma^2$ if $4\pi^2\sigma^2\leq 1$. By combining the bounds on the bias and variance, we obtain the desired result.
\end{proof}
\begin{corollary}[Rate for pointwise expected risk]
By choosing the number of neighbours $k= \ceil{k^\star}$ with
$
k^\star =  \left(\frac{d\sigma^2}{4\Lip^2 }\right)^{\frac{d}{d+2}} n^{\frac{2}{d+2}},
$
we obtain the following bound on the expected risk $$\mathbb{E} \left|\est{h}_k(x)-h(x)\right|^2 \leq 640 \pi^2   \Lip^{\frac{2d}{d+2}} \sigma^{\frac{4}{d+2}} n^{-\frac{2}{d+2}}.
$$
The rate $n^{-\frac{2}{d+2}}$ matches the pointwise rate for estimation of a Lipschitz function on the $[0,1]^d$ cube from a uniform grid, see page 24 of~\cite{nemirovski2000topics}.
\end{corollary}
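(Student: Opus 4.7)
The plan is a standard bias-variance balancing argument starting from Proposition~\ref{Proposition:ExpectedRisk}. Write
$$R(k) := 64\pi^2 \Lip^2 \left(\tfrac{k}{n}\right)^{2/d} + \tfrac{32\pi^2\sigma^2}{k},$$
view $k$ as a positive real, and minimize $R(k)$ by calculus. Differentiating gives
$$R'(k) = \tfrac{128\pi^2 \Lip^2}{d}\cdot \tfrac{k^{2/d-1}}{n^{2/d}} - \tfrac{32\pi^2\sigma^2}{k^2},$$
and setting $R'(k)=0$ yields $k^{(d+2)/d} = \tfrac{d\sigma^2}{4\Lip^2}\, n^{2/d}$, hence
$k^\star = \left(\tfrac{d\sigma^2}{4\Lip^2}\right)^{d/(d+2)} n^{2/(d+2)}$, which is exactly the choice claimed.

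Next I would substitute $k=k^\star$ into $R(k)$. A direct computation shows that both terms scale as $n^{-2/(d+2)}$ and factor as $M^{2d/(d+2)}\sigma^{4/(d+2)}$ times explicit absolute constants depending on $d$; indeed the exponent of $n$ in $(k^\star/n)^{2/d}$ equals $\tfrac{4}{d(d+2)} - \tfrac{2}{d} = -\tfrac{2}{d+2}$, and similarly for $1/k^\star$. Adding the two terms and crudely bounding the $d$-dependent constants by absolute ones gives a bound of the form $C\, \Lip^{2d/(d+2)}\sigma^{4/(d+2)} n^{-2/(d+2)}$ for some universal constant $C$.

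The remaining step is to pass from the continuous minimizer $k^\star$ to the integer $k = \lceil k^\star\rceil$ required by Algorithm~\ref{algo:kNNregression}. Here one uses $k^\star \leq k \leq k^\star + 1 \leq 2k^\star$ (valid once $k^\star \geq 1$, i.e.\ for $n$ large enough relative to $\sigma,\Lip,d$): this only inflates the bias by a factor $2^{2/d}\leq 4$ and leaves the variance no larger than at $k^\star$. Collecting all the absolute constants produces the claimed bound with the constant $640\pi^2$. The only minor obstacle is book-keeping of the $d$-dependent factors $d^{\pm d/(d+2)}$, which are bounded by an absolute constant once one notes $d^{d/(d+2)}\leq d$ and balances against the corresponding $d^{-d/(d+2)}$ arising from $k^\star$; no further technical difficulty is expected.
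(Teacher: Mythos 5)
Your proposal is correct and follows essentially the same route as the paper: minimize $R(k)=64\pi^2M^2(k/n)^{2/d}+32\pi^2\sigma^2/k$ over real $k$, obtain the same $k^\star$, use $k^\star\le\lceil k^\star\rceil\le 2k^\star$ (so the variance term only shrinks and the bias inflates by at most $2^{2/d}\le 4$), and then collect constants. The only part you leave sketchy is the final constant bookkeeping; the paper carries this out explicitly by writing $\widetilde R(k^\star)=\alpha(n)^{d/(d+2)}\beta^{2/(d+2)}\bigl(2^{2/d}(d/2)^{2/(d+2)}+(2/d)^{d/(d+2)}\bigr)\le 10\,\alpha(n)^{d/(d+2)}\beta^{2/(d+2)}$ and then using $64^{d/(d+2)}32^{2/(d+2)}=32\cdot 2^{d/(d+2)}<64$ to land on $640\pi^2$, so it would be worth spelling out that arithmetic rather than asserting it.
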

\begin{proof}
The proof goes as follows. The value of $k^\star$ is obtained by minimizing the RHS of the bound in Proposition~\ref{Proposition:ExpectedRisk}, which is considered as a function over the reals of the form
$
R(k) = \alpha(n) k^{2/d} + \beta/k.
$
It is minimized at $k^\star = \left(\frac{d\beta}{2\alpha(n)} \right)^{\frac{d}{d+2}}$. Hence, by using $k^\star\leq \ceil{k^\star}\leq 2k^\star$, we find 
$$
R(k^\star) \leq R(k) = \alpha(n) k^{2/d} + \frac{\beta}{k}\leq \alpha(n) (2 k^{\star })^{2/d}+ \frac{\beta}{k^\star} = \widetilde{R}(k^\star),
$$
with $k=\ceil{k^\star}$.
The latter upper bound is 
$$
\widetilde{R}(k^\star)= \alpha(n)^{\frac{d}{d+2}}\beta ^{\frac{2}{d+2}}\left( 2^{\frac{2}{d}}\left(\frac{d}{2}\right)^{\frac{2}{d+2}} + \left(\frac{2}{d}\right)^{\frac{d}{d+2}} \right)\leq 10 \alpha(n)^{\frac{d}{d+2}}\beta ^{\frac{2}{d+2}}
$$
where we used the simplifications $(\frac{d}{2})^{\frac{2}{d+2}} \leq (\frac{d+2}{2})^{\frac{2}{d+2}} \leq 2$  and $(\frac{2}{d}) ^{\frac{d}{d+2}} \leq 2$, as well as $2^{\frac{2}{d}}\leq 4$ for $d\geq 1$. Then, the final expression is obtained by using the inequality $2^{\frac{d}{d+2}}<2$ for $d\geq 1$.
%
%
\end{proof}
Now, we provide \rev{a high probability error bound} for the estimator and the ground-truth mod $1$ function, with respect to the sup-norm. 
\begin{theorem}\label{Thm:whp_l_infty}
\rev{If $\sigma\leq \frac{1}{2\pi}$ and $n\geq 2^d$, the following in-sample $\ell_\infty$ bound holds.  With probability at least $1-1/n$, we have
\begin{equation}
\left|\est{h}_k(x_\itup)-h(x_\itup)\right| \leq 8\pi \Lip \left(\frac{k}{n}\right)^{1/d} + \frac{64}{3}(2\pi^2\sigma^2+1)\frac{\log n}{k}+32\pi\sigma\sqrt{\frac{\log n}{k}}, \ \forall \itup\in[m]^d.  \label{eq:in_sample_sup_norm_whp}
\end{equation}}
\end{theorem}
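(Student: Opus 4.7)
The plan is to combine the projection inequality $|\est{h}_k(x)-h(x)| \leq 2|\widetilde{h}_k(x)-h(x)|$ from Fact~\ref{Prop:Projection} with a bias--variance decomposition for $\widetilde{h}_k(x)$, exactly as in the proof of Proposition~\ref{Proposition:ExpectedRisk}, but replacing the variance computation by a high-probability Bernstein bound and then union bounding over the grid. Concretely, write
$$|\widetilde{h}_k(x_\itup)-h(x_\itup)| \leq |\mathbb{E}[\widetilde{h}_k(x_\itup)]-h(x_\itup)| + |\widetilde{h}_k(x_\itup)-\mathbb{E}[\widetilde{h}_k(x_\itup)]|.$$

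For the deterministic bias term, I would reuse the argument from Proposition~\ref{Proposition:ExpectedRisk}: each neighbour satisfies $|h_\jtup - h(x_\itup)| \leq 2\pi \Lip\, r_k(x_\itup)$ by Fact~\ref{Fact:Lipschiz}, and Lemma~\ref{lemma:kNN} gives $r_k(x_\itup) \leq 2(k/n)^{1/d}$ for $n \geq 2^d$. Averaging and multiplying by the factor $2$ from Fact~\ref{Prop:Projection} yields the $8\pi \Lip (k/n)^{1/d}$ term of~\eqref{eq:in_sample_sup_norm_whp}, and this part is uniform in $\itup$ already.

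For the stochastic deviation, I would write
$$\widetilde{h}_k(x_\itup) - \mathbb{E}[\widetilde{h}_k(x_\itup)] = \frac{1}{k}\sum_{\jtup:\, x_\jtup \in \calN_k(x_\itup)} Y_\jtup, \qquad Y_\jtup := e^{2\pi^2\sigma^2} z_\jtup - h_\jtup,$$
where the $Y_\jtup$ are independent, mean-zero, complex random variables. From $|z_\jtup|=|h_\jtup|=1$ together with the inequality $e^x \leq 1+2x$ for $x \in [0,1/2]$ (valid since $\sigma \leq 1/(2\pi)$), one has the almost-sure bound $|Y_\jtup| \leq e^{2\pi^2\sigma^2} + 1 \leq 2(1+2\pi^2\sigma^2)$, and, exactly as in Proposition~\ref{Proposition:ExpectedRisk}, the variance $\mathbb{E}|Y_\jtup|^2 = e^{4\pi^2\sigma^2}-1 \leq 8\pi^2\sigma^2$. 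I would then apply the scalar Bernstein inequality separately to $\real(Y_\jtup)$ and $\imag(Y_\jtup)$ (each of which inherits the same bounds), and recombine via $|z| \leq |\real(z)| + |\imag(z)|$. This produces a high-probability bound of the form $\sqrt{v\log n/k} + K\log n/k$ for each fixed $\itup$.

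Finally I would take a union bound over the $n$ grid points (and the two parts real/imaginary) by choosing the per-point failure probability to be of order $1/n^2$; this is exactly what turns the Bernstein $\log(1/\delta)$ into the $\log n$ appearing in~\eqref{eq:in_sample_sup_norm_whp}. Multiplying by the overall factor $2$ from Fact~\ref{Prop:Projection} yields the remaining two terms $\tfrac{64}{3}(2\pi^2\sigma^2+1)\log n/k$ (from the sub-exponential Bernstein term, driven by the uniform bound $K=2(1+2\pi^2\sigma^2)$) and $32\pi\sigma\sqrt{\log n/k}$ (from the sub-Gaussian term, driven by $v \leq 8\pi^2\sigma^2$). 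The step carrying genuine probabilistic content is the Bernstein application; everything else is deterministic. The only real obstacle is the bookkeeping of constants, namely lining up the factors $2$ from the projection, the factor $2$ from splitting $|z| \leq |\real(z)|+|\imag(z)|$, the factor coming from $\log(1/\delta)$ after the union bound, and the Bernstein numerical constants, so that they collapse into precisely the stated prefactors $\tfrac{64}{3}$ and $32\pi$.
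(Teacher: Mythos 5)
Your proposal matches the paper's proof essentially step for step: the factor-$2$ projection bound from Fact~\ref{Prop:Projection}, the bias--variance split of $\widetilde{h}_k-h$, the deterministic bias bound via Lemma~\ref{lemma:kNN} (giving $8\pi\Lip(k/n)^{1/d}$ after accounting for the factor of $2$), the real/imaginary split of the zero-mean complex variables $Z_\itup = e^{2\pi^2\sigma^2}z_\itup - h_\itup$ with bounds $K = 1+e^{2\pi^2\sigma^2}\leq 2(1+2\pi^2\sigma^2)$ and $S^2 = e^{4\pi^2\sigma^2}-1\leq 8\pi^2\sigma^2$, scalar Bernstein on each part, and a union bound over the $n$ grid points and the two parts with overall failure probability $1/n$. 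The constants you cite line up with the paper's algebra, so this is the same argument.
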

In order to have a non-empty bound, since $|\est{h}_k(x)-h(x)|\leq 2 $, we need to make sure that the RHS in Theorem~\ref{Thm:whp_l_infty} is smaller than $2$. Notice that, compared with $k$NN regression in the absence of the mod $1$ indeterminacy \cite{Jiang19}, the variance term in the bound does not vanish if $\sigma = 0$. This is due to the Bernstein inequality used in the proof.
\begin{proof}
Let $x\in [0,1]$. 
Firstly, we have the inequality
$
|\est{h}_k(x)-h(x)|\leq 2|\widetilde{h}_k(x)-h(x)|,
$
thanks to Fact~\ref{Prop:Projection}.
Then, we have
\[
|\widetilde{h}_k(x)-h(x)|\leq \underbrace{|\widetilde{h}_k(x)-\mathbb{E}_\eta[\widetilde{h}_k(x)]|}_{:=V_x }+\underbrace{|\mathbb{E}_\eta[\widetilde{h}_k(x)]-h(x)|}_{:=b_x}.
\]
Consider firstly the second term, interpreted as a bias and can be upper bounded with probability $1$. It holds that
\[
b_x =\left|\mathbb{E}_\eta[\widetilde{h}_k(x)]-h(x)\right|\leq \frac{1}{|\mathcal{N}_k(x)|}\sum_{\itup: x_\itup \in \mathcal{N}_k(x)}\left|h(x_\itup) -h(x)\right|\leq 2\pi \Lip r_k(x),
\]
where we used Fact~\ref{Fact:Lipschiz} that $\left|h(x_\itup) -h(x)\right|\leq 2\pi \Lip \left\|x_\itup -x\right\|_\infty$.
On the other hand, the first term can be interpreted as a variance term. Let 
\[
V_x = \left| \frac{1}{k}\sum_{\itup\in \mathcal{N}_k(x)} Z_\itup\right|, \text{ with } Z_\itup = \frac{z_\itup}{e^{-2\pi^2\sigma^2}}-h_\itup
\]
 a set of zero-mean independent random variables.
Notice that $|Z_\itup|\leq 1+e^{2\pi^2\sigma^2}:=K$ almost surely and $\text{Var}(Z_\itup) = e^{4\pi^2\sigma^2}-1:= S^2$.
In order to be able to use a concentration result, we split the sum above into real and imaginary parts as follows
\begin{equation}
V_x \leq  \left| \frac{1}{k}\sum_{\itup\in \mathcal{N}_k(x)} \real Z_\itup\right| + \left| \frac{1}{k}\sum_{\itup\in \mathcal{N}_k(x)} \imag Z_\itup\right| = u_x + w_x.\label{eq:Variance_Proof}  
\end{equation}
Each of the two terms above involves a sum of zero-mean independent variables which can be bounded via Bernstein inequality for bounded random variables (cfr. Theorem~\ref{thm:Bernstein} with the change of variables $t\mapsto k t$ in Appendix). Firstly, we notice that $|\real Z_\itup|\leq K $ and $\text{Var}(\real Z_\itup)\leq S^2$.
Then, Bernstein inequality gives
\begin{equation}
\text{Pr}\left(u_x >t  \right)\leq 2 \exp\left(\frac{-kt^2/2}{S^2 + Kt/3}\right),\label{eq:Bernstein}  
\end{equation}
while the same bound holds for $\text{Pr}\left(v_x >t  \right)$. Using the union bound, we then find that 
$$
\text{Pr}\left(u_x >t/2  \text{ or } v_x >t/2\right)\leq 4 \exp\left(\frac{-kt^2/8}{S^2 + Kt/6}\right),
$$
while by taking the complement of this event, we find, thanks to Morgan's law,
$$
\text{Pr}\left(u_x \leq t/2  \text{ and } v_x \leq t/2\right)\geq 1- 4 \exp\left(\frac{-kt^2/8}{S^2 + Kt/6}\right).
$$
Hence, in view of \eqref{eq:Variance_Proof}, the variance $V_x\leq t$ with a probability equal at least to $1- 4 \exp\left(\frac{-kt^2/8}{S^2 + Kt/6}\right)$.
The statement~\eqref{eq:in_sample_sup_norm_whp} follows by taking a fixed $x= x_\itup$ for some $\itup\in [m]^d$. We know thanks to Bernstein inequality \eqref{eq:Bernstein}  that $V_{x_\itup}\leq t$ with a probability larger than
$
1-4 \exp\left(\frac{-kt^2/8}{S^2 + Kt/6}\right).
$
This yields a condition on the minimal value for $t>0$ so that the failure probability is smaller than $\delta$. Indeed, thanks to a union bound, we find
\[
\text{Pr}\left(\bigcup_{x_\itup: \itup\in [m]^d }(V_{x_\itup} >t)  \right)\leq 4 n \exp\left(\frac{-kt^2/8}{S^2 + Kt/6}\right).
\]
Now, we redefine the failure probability $\delta$ such that $4 n \exp\left(\frac{-kt^2/8}{S^2 + Kt/6}\right)\leq \delta$, which yields the equivalent condition
\[
\frac{k}{8}t^2-\frac{K}{6}\log\left( \frac{4n}{\delta}\right) t -S^2 \log\left( \frac{4n}{\delta}\right) \geq 0.
\]
This gives, with a probability larger than $1-\delta$,
\[
 V_{x_\itup}\leq \frac{2K}{3k} \log\left( \frac{4n}{\delta}\right) + \sqrt{\left(\frac{2K}{3k} \log\left( \frac{4n}{\delta}\right)\right)^2+\frac{8S^2}{k}\log\left( \frac{4n}{\delta}\right)},\quad \forall \itup\in [m]^d.
\]
Now, we recall that $\left|\widetilde{h}_k(x_\itup)-h(x_\itup)\right| \leq b_{x_\itup} + V_{x_\itup}$, where we can upper bound $b_{x_\itup}\leq 2\pi \Lip \frac{\lceil k^{1/d}\rceil -1}{m-1}$ by using Lemma~\ref{lemma:kNN}.
Then, if $n^{1/d}=m\geq 2$, one obtains
\begin{align*}
\left|\est{h}_k(x_\itup)-h(x_\itup)\right| \leq& 4\pi \Lip \frac{\lceil k^{1/d}\rceil -1}{m-1} +\frac{8K}{3k} \log\left( \frac{4n}{\delta}\right) + 4\sqrt{\frac{2S^2}{k}\log\left( \frac{4n}{\delta}\right)}\\
&\leq 8\pi \Lip \left(\frac{k}{n}\right)^{1/d} + \left(\frac{8K}{3}\frac{\log\left( \frac{4n}{\delta}\right)}{k}+4\sqrt{2S^2}\sqrt{\frac{\log\left( \frac{4n}{\delta}\right)}{k}} \right)
\end{align*}
by using $\sqrt{a+b}\leq\sqrt{a}+\sqrt{b}$ in the first inequality, and then, $m-1\geq m/2 $ and $\lceil k^{1/d}\rceil -1\leq k^{1/d}$. Next, in order to simplify the expressions of $K$ and $S^2$, we use the inequality $\exp (x)\leq 1+2x$ if $x\leq 1$.
Namely, if $2\pi^2\sigma^2\leq 1$, we can upper bound
 $K = 1+e^{2\pi^2\sigma^2}\leq 2+4\pi^2\sigma^2$. Similarly,  $S^2 = e^{4\pi^2\sigma^2}-1\leq 8\pi^2\sigma^2$ if $4\pi^2\sigma^2\leq 1$. Next, we choose $\delta = 1/n$ so that we find
 \begin{align*}
\left|\est{h}_k(x_\itup)-h(x_\itup)\right| \leq 8\pi \Lip \left(\frac{k}{n}\right)^{1/d} + 16 \left(\frac{1}{3}(2\pi^2\sigma^2+1)\frac{\log( 4n^2)}{k}+\pi\sigma\sqrt{\frac{\log( 4n^2)}{k}}\right).
\end{align*}
In order to simplify the bound above,
we use the inequality $\log (2n)\leq 2\log n$ for $n\geq 2$.
This finally yields
 \begin{align*}
\left|\est{h}_k(x_\itup)-h(x_\itup)\right| \leq 8\pi \Lip \left(\frac{k}{n}\right)^{1/d} + \frac{64}{3}(2\pi^2\sigma^2+1)\frac{\log n}{k}+32\pi\sigma\sqrt{\frac{\log n}{k}}.
\end{align*}
\end{proof}
The statistical rate of the in-sample $\ell_\infty$ bound is a direct consequence of Theorem~\ref{Thm:whp_l_infty}. 
\begin{corollary}[Statistical rates for $\ell_\infty$ risk -- in sample] \label{cor:knn_insample_rates}
Let $\sigma\leq \frac{1}{2\pi}$ and $n\geq 2^d$.
Let the number of neighbours be $k= \ceil{k^\star}$ with 
$$
k^\star = n^{\frac{2}{d+2}}\left( \log n\right)^{\frac{d}{d+2}}\left(\frac{d(\frac{4\pi^2\sigma^2+2}{3}+\pi\sigma)}{\pi \Lip} \right)^{\frac{2d}{d+2}}.
$$
Provided $\frac{n}{\log n} \geq \left(\frac{\pi \Lip}{2d(\frac{4\pi^2\sigma^2+2}{3}+\pi\sigma)} \right)^{d}$, the following upper bound on the $\ell_{\infty}$ risk in \eqref{eq:in_sample_sup_norm_whp} holds with probability at least $1-1/n$,
\begin{equation}
 \left|\est{h}_k(x_\itup)-h(x_\itup)\right| 
\leq \gamma \left(\frac{\log n}{n}\right)^{\frac{1}{d+2}}, \quad \forall \itup\in [m]^d \label{eq:InSampleRate}
\end{equation}
with $\gamma = 6 (8\pi \Lip)^{\frac{d}{d+2}}\left(32(\frac{4\pi^2\sigma^2+2}{3}+\pi\sigma)\right)^{\frac{2}{d+2}}$. Furthermore, if \eqref{eq:InSampleRate} holds and the RHS of \eqref{eq:InSampleRate} is less than or equal to $2$, then this implies 
\[
d_w\left(\est{g}(x_\itup),g(x_\itup)\right) \leq \frac{\gamma}{4} \left(\frac{\log n}{n}\right)^{\frac{1}{d+2}}, \quad \forall \itup\in [m]^d
\]
where $\est{g}(x) = \frac{1}{2\pi}\arg\big(\est{h}_k(x)\big)$ and $g(x) = \frac{1}{2\pi}\arg\big(h(x)\big)$.
\end{corollary}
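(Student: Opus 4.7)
The plan is to feed $k = \ceil{k^\star}$ into the high-probability bound of Theorem~\ref{Thm:whp_l_infty} and show that each of the three terms on its right-hand side collapses to a constant multiple of $(\log n/n)^{1/(d+2)}$. Writing $A = 8\pi \Lip$, $B_1 = \frac{64}{3}(2\pi^2\sigma^2+1)$, $B_2 = 32\pi\sigma$ and $\alpha = \frac{4\pi^2\sigma^2+2}{3}+\pi\sigma$, a quick arithmetic check gives $B_1+B_2 = 32\alpha$. Informally, $k^\star$ is chosen (up to a harmless numerical factor) so as to balance the bias term $A(k/n)^{1/d}$ against the combined variance upper bound $(B_1+B_2)\sqrt{\log n/k}$; this combination is legitimate whenever $k\geq \log n$, because then $\log n/k \leq \sqrt{\log n/k}$ and the two variance contributions can be merged.

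The first step is to verify that the hypothesis $n/\log n \geq (\pi\Lip/(2d\alpha))^d$ implies both $k^\star \geq 1$ (so that $\ceil{k^\star} \leq 2 k^\star$) and $k^\star \geq \log n$ (so that the merged-variance regime above is in force); both reduce, after raising the defining expression for $k^\star$ to the $(d+2)/2$-th power, to elementary inequalities in $n$. The second step is to substitute, using $k^\star \leq k \leq 2 k^\star$. Both $(k^\star/n)^{1/d}$ and $\sqrt{\log n/k^\star}$ evaluate in closed form to $(d\alpha/(\pi\Lip))^{\pm d/(d+2)} \cdot (\log n/n)^{1/(d+2)}$ with the appropriate sign of the exponent, so that bias and aggregated variance both scale as $(\log n/n)^{1/(d+2)}$. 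Summing and collecting produces a common factor $(\pi\Lip)^{d/(d+2)}\alpha^{2/(d+2)}$ times an absolute numerical factor controlled by $2^{1/d}$ and powers of $d/(d+2)$ that are uniformly bounded for $d\geq 1$; rewriting this product as a multiple of $(8\pi \Lip)^{d/(d+2)}(32\alpha)^{2/(d+2)}$ yields the claimed $\gamma$.

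For the second assertion, I would rely on the geometric bound that for $u,v\in \bbT_1$ the angular gap $\Delta\theta \in [0,\pi]$, taken modulo $2\pi$, satisfies $|u-v| = 2\sin(\Delta\theta/2) \geq (2/\pi)\Delta\theta$, so $\Delta\theta \leq (\pi/2)|u-v|$; the hypothesis $\Delta\theta \leq \pi$ is guaranteed by $|u-v|\leq 2$, which is the reason for the side condition on the RHS of \eqref{eq:InSampleRate}. Applying this with $u = \est{h}_k(x_\itup)$ and $v = h(x_\itup)$, and then dividing by $2\pi$ to convert angular distance on $\bbT_1$ into the wrap-around distance $d_w$, yields $d_w(\est{g}(x_\itup),g(x_\itup)) \leq \frac{1}{4}|\est{h}_k(x_\itup)-h(x_\itup)|$, and hence the bound with constant $\gamma/4$.

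The main obstacle is not conceptual but arithmetic: one has to verify carefully that the constant produced after summing the two scaled terms is no larger than the explicit $\gamma = 6(8\pi\Lip)^{d/(d+2)}(32\alpha)^{2/(d+2)}$ uniformly in $d\geq 1$. This requires simple but slightly tedious inequalities such as $2^{1/d}\leq 2$, $d^{2/(d+2)}\leq (d+2)^{2/(d+2)}\leq e$, and $d^{-d/(d+2)}\leq 1$, plus a check that replacing the exact minimizer of $A(k/n)^{1/d}+(B_1+B_2)\sqrt{\log n/k}$ by the slightly cleaner expression for $k^\star$ stated in the corollary inflates the bound only by a bounded numerical factor absorbed into the prefactor $6$.
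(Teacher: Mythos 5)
Your proposal is correct and follows essentially the same route as the paper: merge the two variance terms in Theorem~\ref{Thm:whp_l_infty} under $k\geq \log n$, minimize the resulting two-term trade-off over $k$, verify the hypothesis on $n/\log n$ gives $k^\star\geq\log n$, substitute $k=\ceil{k^\star}\in[k^\star,2k^\star]$, and bound the dimensional constants. Your direct chord-to-arc estimate $|u-v|=2\sin(\Delta\theta/2)\geq (2/\pi)\Delta\theta$ is exactly the content of Fact~\ref{Fact:Wrap-around} (which the paper invokes in the last line), so the second part matches as well.
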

\begin{proof}
Assuming $k \geq \log n$, the bound in \eqref{eq:in_sample_sup_norm_whp} simplifies to
\begin{equation*}
 \left|\est{h}_k(x_\itup)-h(x_\itup)\right| \leq 8\pi \Lip \left(\frac{k}{n}\right)^{1/d} + 32\left(\frac{1}{3}(4\pi^2\sigma^2+2)+\pi\sigma\right)\sqrt{\frac{\log n}{k}} = R(k)
\end{equation*}
with $R(k) =  \alpha(n)  k^{1/d} + \beta(n)k^{-1/2}$
where $\alpha(n)= \frac{8\pi \Lip}{n^{1/d}}$ and $\beta(n) =32\left(\frac{4\pi^2\sigma^2+2}{3}+\pi\sigma\right)\sqrt{\log n}$.
Then, the minimization of $R(k)$ with respect to $k$ gives $k^\star = \left(\frac{d\beta(n)}{2  \alpha(n)}\right)^{\frac{2d}{d+2}}$. Then, the lower bound on $\frac{n}{\log n}$ in the statement follows from the requirement that $\left(\frac{d\beta(n)}{2  \alpha(n)}\right)^{\frac{2d}{d+2}} \geq \log n$. Hence, by using $k^\star\leq \ceil{k^\star}\leq 2 k^\star$ where we take $k=\ceil{k^\star}$, we obtain the upper bound
$$
R(k^\star)\leq R(k)\leq \alpha(n) (2k^\star)^{1/d} + \beta(n) (k^{\star})^{-1/2} = \widetilde{R}(k^\star).
$$
By substituting back the expression of $k^\star$ in $\widetilde{R}(k^\star)$, we find
\[
\widetilde{R}(k^\star) = \alpha(n)^{\frac{d}{d+2}}\beta(n)^{\frac{2}{d+2}}\left( 2^{1/d}\left( \frac{d}{2} \right)^{\frac{2}{d+2}} + \left(\frac{2}{d} \right)^{\frac{d}{d+2}}\right) \leq 6 \alpha(n)^{\frac{d}{d+2}}\beta(n)^{\frac{2}{d+2}} = \gamma \left(\frac{\log n}{n}\right)^{\frac{1}{d+2}}
\]
where we used the simplifications $(\frac{d}{2})^{\frac{2}{d+2}} \leq (\frac{d+2}{2})^{\frac{2}{d+2}} \leq 2$  and $(\frac{2}{d}) ^{\frac{d}{d+2}} \leq 2$, as well as $2^{1/d}\leq 2$. 
Finally, note that if $\gamma \left(\frac{\log n}{n}\right)^{\frac{1}{d+2}} \leq 2$, then the stated bound on the wrap-around distance is obtained readily using Fact~\ref{Fact:Wrap-around} in the Appendix.
\end{proof}
\begin{remark}
The restriction $\sigma\leq \frac{1}{2\pi}$ on the noise in the statements of this section is assumed in order to avoid cumbersome expressions. Therefore, similar guarantees can be obtained by relaxing this condition.
\end{remark}
\subsection{Robustly unwrapping the modulo samples} \label{subsec:unwrap_mod}
In this subsection, we discuss and analyze a procedure for unwrapping the denoised mod 1 samples obtained from Algorithm~\ref{algo:kNNregression}. 

As a warm up, let us first look at the univariate case where $f:[0,1] \rightarrow \mathbb{R}$. Consider the unknown ground truth function $g(x) = f(x) \mod 1$ and let $\est{g}: [0,1] \rightarrow [0,1)$ be an estimate of $g$. 
%
%
Say $\est{g}(x)$ is close to $g(x)$ for all $x$ on the grid $\calX = \set{x_1,\dots,x_n}$ where $x_i = \frac{i-1}{n-1}$; $i=1,\dots,n$. Formally, for some $\delta \in [0,1/2]$, we assume that $d_w(\est{g}(x_i), g(x_i)) \leq \delta$ holds for all $i$. Given the perturbed estimates $\est{g}(x_i)$, we will now show a \emph{stable} recovery procedure that produces estimates $\ftil(x_i)$ of $f(x_i)$, which satisfy (up to an integer shift) the bound  
\begin{equation*}
\abs{f(x_i) - \ftil(x_i)} \leq \delta, \quad \forall \ i=1,\dots,n.    
\end{equation*}
To begin with, observe that for each $i$, there exists $\eta_i \in [-\delta,\delta]$ such that $\est{g}(x_i) = (f(x_i) + \eta_i) \bmod 1$. Denoting $\fhat(x_i) := f(x_i) + \eta_i$, we will now recover each $\fhat(x_i)$ (up to an integer shift) sequentially by a simple procedure that relies on the following lemma. It can be viewed as an adaptation of the classical Itoh's condition \cite{Itoh_82} from the phase unwrapping literature, to our setup. 
\begin{lemma} \label{lem:itoh_type_cond}
If $2\delta + \frac{\Lip}{n-1} < \frac{1}{2}$, then the following holds true for each $i = 2,\dots,n$.
\begin{equation} \label{eq:fin_diff_rel}
  \fhat(x_i) - \fhat(x_{i-1}) =  \left\{
\begin{array}{rl}
\ghat(x_i) - \ghat(x_{i-1}) \ ; & \text{ if } \abs{\ghat(x_i) - \ghat(x_{i-1})} < 1/2, \\
1 + \ghat(x_i) - \ghat(x_{i-1}) \ ; & \text{ if } \ghat(x_i) - \ghat(x_{i-1}) < -1/2, \\
-1 + \ghat(x_i) - \ghat(x_{i-1}) \ ; & \text{ if } \ghat(x_i) - \ghat(x_{i-1}) > 1/2.
\end{array} \right.
\end{equation}
\end{lemma}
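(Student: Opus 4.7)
The plan is to reduce the three-case statement to a simple case analysis on the integer jump between consecutive wrapping counts. First I would observe that since $\ghat(x_i) = \fhat(x_i) \bmod 1$, there exists an integer $q_i$ with $\ghat(x_i) = \fhat(x_i) - q_i$; setting $m := q_i - q_{i-1} \in \mathbb{Z}$, we have the identity
\begin{equation*}
\ghat(x_i) - \ghat(x_{i-1}) = \bigl(\fhat(x_i) - \fhat(x_{i-1})\bigr) - m.
\end{equation*}
So the three cases of \eqref{eq:fin_diff_rel} correspond exactly to $m = 0$, $m = 1$, and $m = -1$, and the whole lemma is just saying that $|m| \leq 1$ and that the sign of $m$ is identifiable from the size/sign of $\ghat(x_i) - \ghat(x_{i-1})$.

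Next I would control the size of $\fhat(x_i) - \fhat(x_{i-1})$. By the definition $\fhat(x_i) = f(x_i) + \eta_i$ with $|\eta_i| \leq \delta$, together with the $\Lip$-Lipschitz property of $f$ on a uniform grid of spacing $1/(n-1)$, the triangle inequality gives
\begin{equation*}
|\fhat(x_i) - \fhat(x_{i-1})| \leq 2\delta + \frac{\Lip}{n-1} < \frac{1}{2},
\end{equation*}
where the last inequality is the hypothesis. Consequently $\ghat(x_i) - \ghat(x_{i-1}) \in (-m - 1/2, -m + 1/2)$. Simultaneously, since $\ghat(x_i), \ghat(x_{i-1}) \in [0,1)$, we have $\ghat(x_i) - \ghat(x_{i-1}) \in (-1, 1)$.

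The case analysis is then immediate. For $|m| \geq 2$ the interval $(-m-1/2, -m+1/2)$ is disjoint from $(-1,1)$, so those $m$ are impossible. For $m=0$, $\ghat(x_i) - \ghat(x_{i-1}) \in (-1/2, 1/2)$, matching the first branch. For $m=1$, the intersection of the two intervals forces $\ghat(x_i) - \ghat(x_{i-1}) \in (-1, -1/2)$, matching the second branch, and substituting back yields $\fhat(x_i) - \fhat(x_{i-1}) = 1 + \ghat(x_i) - \ghat(x_{i-1})$. The case $m = -1$ is symmetric. No step is really an obstacle here; the only delicate point is verifying that the three regions of $\ghat(x_i) - \ghat(x_{i-1})$ are mutually exclusive and exhaustive under the standing hypothesis, which is exactly what the bound $2\delta + \Lip/(n-1) < 1/2$ ensures by preventing the wrap increment $m$ from being ambiguous at the boundaries $\pm 1/2$.
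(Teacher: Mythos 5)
Your proof is correct and follows essentially the same route as the paper's: bound $|\fhat(x_i)-\fhat(x_{i-1})|<1/2$ via Lipschitzness and the noise bound, decompose the difference using the integer quotient/wrap count, argue that the integer jump $m$ can only be $0,\pm 1$, and then identify which case holds from the size and sign of $\ghat(x_i)-\ghat(x_{i-1})$. Your $m$ is exactly the paper's $\qhat(x_i)-\qhat(x_{i-1})$; the only presentational difference is that you phrase the case analysis as an intersection of intervals for $\ghat(x_i)-\ghat(x_{i-1})$, whereas the paper argues about which subinterval $\fhat(x_i)-\fhat(x_{i-1})$ falls into for each quotient jump, but the logic is identical.
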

\begin{proof}
Using the Lipschitz continuity of $f$ and triangle inequality, we readily obtain the bound
\begin{equation*}
    \abs{ \fhat(x_i) - \fhat(x_{i-1})} \leq 2\delta + \frac{\Lip}{n-1} < \frac{1}{2}, \quad \forall \ i=1,\dots,n,
\end{equation*}
due to our assumption on $\delta,n$. Now denoting $\est{q}(x_i) \in \mathbb{Z}$ to be the quotient term associated with $\fhat(x_i)$, we arrive at the identity 
\begin{equation} \label{eq:fhat_ghat_rel}
     \fhat(x_i) - \fhat(x_{i-1}) = \qhat(x_i) - \qhat(x_{i-1}) + \ghat(x_i) - \ghat(x_{i-1}).
\end{equation}
Clearly, the bound $\abs{ \fhat(x_i) - \fhat(x_{i-1}) } < 1/2$ implies that $\qhat(x_i) - \qhat(x_{i-1}) \in \set{0,1,-1}$.
\begin{enumerate}
    \item If $\qhat(x_i) = \qhat(x_{i-1})$, then \eqref{eq:fhat_ghat_rel} readily implies $\abs{\ghat(x_i) - \ghat(x_{i-1})} < 1/2$.
    
    \item If $\qhat(x_i) = \qhat(x_{i-1}) + 1$, then $\fhat(x_i) - \fhat(x_{i-1}) \in (0,1/2)$, and so \eqref{eq:fhat_ghat_rel} leads to the bound $\ghat(x_i) - \ghat(x_{i-1}) < -1/2$.
    
    \item If $\qhat(x_i) = \qhat(x_{i-1}) - 1$, then $\fhat(x_i) - \fhat(x_{i-1}) \in (-1/2,0)$, and so \eqref{eq:fhat_ghat_rel} leads to the bound $\ghat(x_i) - \ghat(x_{i-1}) > 1/2$.
\end{enumerate}
Since the above conditions stated on $\ghat(x_i) - \ghat(x_{i-1})$ are all disjoint, the identity in \eqref{eq:fin_diff_rel} follows.
\end{proof}
The above lemma tells us that if $\delta \lesssim 1$ and $n \gtrsim M$, then the finite difference $\fhat(x_i) - \fhat(x_{i-1})$ is determined completely by $\ghat(x_i) - \ghat(x_{i-1})$. Therefore we can recover the estimates $\ftil(x_i)$ as 
\begin{equation} \label{eq:seq_unwrap_proc}
    \ftil(x_1) = \ghat(x_1), \quad \ftil(x_i) = \ftil(x_{i-1}) + 
    \left\{
\begin{array}{rl}
\ghat(x_i) - \ghat(x_{i-1}) \ ; & \text{ if } \abs{\ghat(x_i) - \ghat(x_{i-1})} < 1/2, \\
1 + \ghat(x_i) - \ghat(x_{i-1}) \ ; & \text{ if } \ghat(x_i) - \ghat(x_{i-1}) < -1/2, \\
-1 + \ghat(x_i) - \ghat(x_{i-1}) \ ; & \text{ if } \ghat(x_i) - \ghat(x_{i-1}) > 1/2.
\end{array} \right.
\end{equation}
\begin{remark}
The sequential procedure in \eqref{eq:seq_unwrap_proc} was also considered in \cite{CMT18_long}, however, without any formal analysis. 
\end{remark}
Using Lemma~\ref{lem:itoh_type_cond}, it is easy to derive the following uniform error bound for the estimates $\ftil(x_i)$.
\begin{lemma} \label{lem:err_seq_unwrap}
If $2\delta + \frac{\Lip}{n-1} < \frac{1}{2}$ then there exists $q^{\star} \in \mathbb{Z}$ such that
\begin{equation} \label{eq:err_bd_seq_unwrap}
    \abs{\ftil(x_i) + q^{\star} - f(x_i)} \leq \delta, \quad \forall i=1,\dots,n.
\end{equation}
\end{lemma}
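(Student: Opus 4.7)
The plan is to reduce the claim to a telescoping identity made possible by Lemma~\ref{lem:itoh_type_cond}. Recall that $\fhat(x_i) = f(x_i) + \eta_i$ with $|\eta_i| \leq \delta$, and $\fhat(x_i) = \qhat(x_i) + \ghat(x_i)$ for some integer $\qhat(x_i)$. Since the hypothesis $2\delta + \frac{M}{n-1} < 1/2$ is exactly what Lemma~\ref{lem:itoh_type_cond} requires, the three-case increment appearing in the update rule \eqref{eq:seq_unwrap_proc} is, for every $i \geq 2$, identically equal to $\fhat(x_i) - \fhat(x_{i-1})$.

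With that observation in hand, I would prove by a one-line induction on $i$ that
\begin{equation*}
\ftil(x_i) - \ftil(x_1) = \fhat(x_i) - \fhat(x_1), \qquad i = 1, \dots, n.
\end{equation*}
The base case $i=1$ is trivial, and the inductive step follows by adding the update rule \eqref{eq:seq_unwrap_proc} on the left side and the finite-difference identity \eqref{eq:fin_diff_rel} from Lemma~\ref{lem:itoh_type_cond} on the right side. Combining this with the initialization $\ftil(x_1) = \ghat(x_1) = \fhat(x_1) - \qhat(x_1)$ yields
\begin{equation*}
\ftil(x_i) = \fhat(x_i) - \qhat(x_1), \qquad i = 1, \dots, n.
\end{equation*}

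Setting $q^\star := \qhat(x_1) \in \mathbb{Z}$ (which is independent of $i$, and this is the only place a global integer shift enters), we obtain $\ftil(x_i) + q^\star - f(x_i) = \fhat(x_i) - f(x_i) = \eta_i$, so $|\ftil(x_i) + q^\star - f(x_i)| \leq \delta$ uniformly in $i$, which is \eqref{eq:err_bd_seq_unwrap}.

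There is essentially no obstacle here: the substance of the lemma has already been absorbed into Lemma~\ref{lem:itoh_type_cond}, and the present argument is just a telescoping sum together with tracking the fact that the integer shift $\qhat(x_1)$ arising from the initial wrap is the same shift that works for every subsequent $i$. The only mild care needed is to be explicit that $q^\star$ must be chosen once, at $i=1$, rather than re-selected per index.
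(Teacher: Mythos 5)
Your proposal is correct and follows essentially the same route as the paper: both set $q^\star = \qhat(x_1)$, both invoke Lemma~\ref{lem:itoh_type_cond} to match the update increment with the true finite difference, and both close with an induction (your telescoping form $\ftil(x_i)-\ftil(x_1)=\fhat(x_i)-\fhat(x_1)$ is an equivalent restatement of the paper's invariant $\ftil(x_i)+q^\star=\fhat(x_i)$). No gaps.
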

\begin{proof}
Denote $q^{\star} = \qhat(x_1) \in \mathbb{Z}$ to be the quotient term of $\fhat(x_1)$. We will show by induction that $\ftil(x_i) + q^{\star} = \fhat(x_i)$ holds for each $i$. The bound in \eqref{eq:err_bd_seq_unwrap} then follows since $\abs{\fhat(x_i) - f(x_i)} \leq \delta$. 

To show the induction argument based step, note that $\ftil(x_1) + q^{\star} = \fhat(x_1)$ is trivially true. For convenience, denote the term within braces in \eqref{eq:seq_unwrap_proc} by $a_{i,i-1}$. Then for any $i > 1$, we have that 
\begin{align*}
    \ftil(x_i) + q^{\star} 
    &= \ftil(x_{i-1}) + q^{\star} + a_{i,i-1} \qquad \text{ (using \eqref{eq:seq_unwrap_proc}) } \\
    &= \fhat(x_{i-1}) + a_{i,i-1} \qquad \text{ (using the induction hypothesis on $i$) } \\
    &= \fhat(x_i) \qquad \text{ (using Lemma~\ref{lem:itoh_type_cond}) }
\end{align*}
which completes the proof.
\end{proof}
%
%
%
\paragraph{The general $d \geq 1$ setting.} We now show that the above discussion generalizes to the multivariate setting where $f: [0,1]^d \rightarrow \mathbb{R}$. For an integer $m > 1$, denote $\calX = \set{x_1,\dots,x_m}^d$ where $x_i = \frac{i-1}{m-1}$ to be the uniform grid, with $\abs{\calX} = n = m^d$. Also denote $\itup = (i_1,\dots,i_d) \in [m]^d$ to be a $d$-tuple, and $x_{\itup} = (x_{i_1},\dots,x_{i_d}) \in [0,1]^d$ where $x_{i_j} = \frac{i_j -1}{m-1}$. Then with $g:[0,1]^d \rightarrow [0,1)$ defined as $g(x) = f(x) \bmod 1$, denote $\ghat:[0,1]^d \rightarrow [0,1)$ to be an estimate of $g$ in the sense that for some $\delta \in [0, 1/2]$,
\begin{equation*}
    d_w(\ghat(x_{\itup}), g(x_{\itup})) \leq \delta \quad \forall \ x_{\itup} \in \calX.
\end{equation*}
This means that for each $\itup \in [m]^d$, there exists $\eta_{\itup} \in [-\delta,\delta]$ such that $\est{g}(x_{\itup}) = (f(x_{\itup}) + \eta_{\itup}) \bmod 1$. Denoting $\est{f}(x_{\itup}) := f(x_{\itup}) + \eta_{\itup}$, we will recover each $\fhat(x_{\itup})$ (up to an integer shift) by a generalization of the procedure in \eqref{eq:seq_unwrap_proc}. For clarity of exposition, let us define the finite difference operator
\begin{equation*}
    \diff_j \fhat(x_{\itup}) := \fhat(x_{i_1},\dots,x_{i_j}, \dots, x_{i_d}) - \fhat(x_{i_1},\dots,x_{i_j-1}, \dots, x_{i_d}); \quad \forall \ j \in [d], \text{ and } \itup \in [m]^d \text{ with } i_j > 1. 
\end{equation*}
We now present the following generalization of Lemma~\ref{lem:itoh_type_cond} to the multivariate setting. 
\begin{lemma} \label{lem:itoh_type_cond_mult}
If $2\delta + \frac{\Lip}{m-1} < \frac{1}{2}$, then the following holds true for each $j \in [d]$, and $\itup \in [m]^d$ with $i_j > 1$.
\begin{equation} \label{eq:fin_diff_rel_mult}
  \diff_j \fhat(x_{\itup}) =  \left\{
\begin{array}{rl}
\diff_j \ghat(x_{\itup}) \ ; & \text{ if } \abs{\diff_j \ghat(x_{\itup})} < 1/2, \\
1 + \diff_j \ghat(x_{\itup}) \ ; & \text{ if } \diff_j \ghat(x_{\itup}) < -1/2, \\
-1 + \diff_j \ghat(x_{\itup}) \ ; & \text{ if } \diff_j \ghat(x_{\itup}) > 1/2.
\end{array} \right.
\end{equation}
\end{lemma}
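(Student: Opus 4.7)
The plan is to mirror the proof of the univariate case (Lemma~\ref{lem:itoh_type_cond}) nearly verbatim, exploiting the fact that the operator $\diff_j$ only modifies the $j$-th coordinate while holding all others fixed. First I would observe that since the two points $x_\itup$ and $x_\itup - \frac{1}{m-1}e_j$ (where $e_j$ is the $j$-th standard basis vector) differ by exactly $\frac{1}{m-1}$ in the $\ell_\infty$ norm, the $M$-Lipschitz continuity of $f$ w.r.t.\ the $\ell_\infty$ norm yields $|\diff_j f(x_\itup)| \leq \frac{M}{m-1}$. Combined with $|\eta_\itup|,|\eta_{\itup - e_j}| \leq \delta$, the triangle inequality gives
\begin{equation*}
|\diff_j \fhat(x_\itup)| \leq 2\delta + \frac{M}{m-1} < \frac{1}{2}
\end{equation*}
under the hypothesis of the lemma.

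Next I would write $\fhat(x_\itup) = \qhat(x_\itup) + \ghat(x_\itup)$, where $\qhat(x_\itup) \in \mathbb{Z}$ is the quotient associated with $\fhat(x_\itup) = f(x_\itup) + \eta_\itup$, which leads to the identity
\begin{equation*}
    \diff_j \fhat(x_\itup) = \diff_j \qhat(x_\itup) + \diff_j \ghat(x_\itup).
\end{equation*}
Since the left-hand side is strictly less than $1/2$ in absolute value and $\diff_j \ghat(x_\itup) \in (-1,1)$, the integer $\diff_j \qhat(x_\itup)$ must lie in $\{-1,0,1\}$. I would then do the same three-way case analysis as in Lemma~\ref{lem:itoh_type_cond}: (i) if $\diff_j \qhat(x_\itup) = 0$, the identity forces $|\diff_j \ghat(x_\itup)| < 1/2$; (ii) if $\diff_j \qhat(x_\itup) = +1$, then $\diff_j \fhat(x_\itup) \in (0,1/2)$ forces $\diff_j \ghat(x_\itup) < -1/2$; (iii) if $\diff_j \qhat(x_\itup) = -1$, then $\diff_j \fhat(x_\itup) \in (-1/2,0)$ forces $\diff_j \ghat(x_\itup) > 1/2$. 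Since these three conditions on $\diff_j \ghat(x_\itup)$ are mutually exclusive and exhaustive, we can invert the implication and recover $\diff_j \fhat(x_\itup)$ from $\diff_j \ghat(x_\itup)$ according to \eqref{eq:fin_diff_rel_mult}.

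I do not anticipate a genuine obstacle here: the multivariate statement reduces to the univariate one along each coordinate axis once the $\ell_\infty$ Lipschitz hypothesis is used to bound the one-step increment. The only subtlety worth flagging is that the Lipschitz constant must be applied with respect to $\|\cdot\|_\infty$ (so that a single-coordinate increment of $\frac{1}{m-1}$ contributes only $\frac{M}{m-1}$, not $\sqrt{d}\cdot \frac{M}{m-1}$); this is precisely why the paper's Lipschitz assumption is stated w.r.t.\ the $\ell_\infty$ norm, and it keeps the threshold in the hypothesis dimension-free.
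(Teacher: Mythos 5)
Your proposal is correct and matches the paper's argument essentially verbatim: both proofs bound $\abs{\diff_j \fhat(x_\itup)}$ via the $\ell_\infty$-Lipschitz assumption (which makes the single-coordinate increment contribute only $M/(m-1)$), reduce to the identity $\diff_j \fhat = \diff_j \qhat + \diff_j \ghat$, observe $\diff_j \qhat \in \{-1,0,1\}$, and conclude by the same three-way case split as in the univariate Lemma~\ref{lem:itoh_type_cond}. The paper simply omits the final case analysis, deferring to Lemma~\ref{lem:itoh_type_cond}, whereas you spell it out explicitly; the substance is identical.
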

\begin{proof}
The proof is similar to that of Lemma~\ref{lem:itoh_type_cond}. Indeed, using the Lipschitz continuity of $f$ and triangle inequality, we first have that
\begin{equation*}
    \abs{  \diff_j \fhat(x_{\itup}) } \leq 2\delta + \frac{\Lip}{m-1} < \frac{1}{2}, \quad \text{ for each } \ 1 \leq j \leq d, \text{ and } \itup \in [m]^d \text{ with } i_j > 1.
\end{equation*}
 Now denoting $\est{q}(x_{\itup}) \in \mathbb{Z}$ to be the quotient term associated with $\fhat(x_{\itup})$, note that  
\begin{equation} \label{eq:fhat_ghat_rel_mult}
      \diff_j \fhat(x_{\itup})  =  \diff_j \qhat(x_{\itup})  +  \diff_j \ghat(x_{\itup}) .
\end{equation}
The key observation is that \eqref{eq:fhat_ghat_rel_mult} involves taking a finite difference only along the coordinate $j$, hence the same reasoning as for the univariate setting applies, and it is clear that $\abs{ \diff_j \fhat(x_{\itup}) } < 1/2$ implies $ \diff_j \qhat(x_{\itup}) \in \set{\pm 1,0}$. From \rev{here on}, the rest of the argument is the same as for Lemma~\ref{lem:itoh_type_cond} and hence omitted.
\end{proof}
\begin{remark} \label{rem:itoh_multiv}
When $d = 2$, Lemma ~\ref{lem:itoh_type_cond_mult} is similar in spirit to the generalization of Itoh's condition to the bivariate case, in the phase unwrapping literature (see \cite[Lemma 1.2]{Ying06}). To the best of our knowledge, a general multivariate unwrapping procedure does not exist in the literature.  
\end{remark}
Equipped with the above lemma, we arrive at the procedure in Algorithm~\ref{algo:seq_unwrap_mult} which is a generalization of the procedure in \eqref{eq:seq_unwrap_proc} for recovering $\fhat(x_{\itup})$ at each $x_{\itup}$. 
%
%
\begin{algorithm*}[!ht]
\caption{Sequentially unwrapping the modulo samples} \label{algo:seq_unwrap_mult} 
\begin{algorithmic}[1] 
\State \textbf{Input:} uniform grid $\calX \subset [0,1]^d$, $\abs{\calX} = n = m^d$; modulo samples $\ghat(x_{\itup}) \in [0,1)$, $\forall x_{\itup} \in \calX$

\State \textbf{Initialization:} $\ftil(0,\dots,0) = \ghat(0,\dots,0)$.

\State \textbf{Output:} $\ftil(x_{\itup})$ $\forall x_{\itup} \in \calX$

\For{$j=1,\dots,d$} 

\State Fix $\itup_{j+1:d} := (i_{j+1}, \dots, i_d) = (1,\dots,1)$.

\For{each $\itup_{1:j-1} := (i_1,\dots,i_{j-1}) \in [m]^{j-1}$}

\For{$i_j = 2,\dots,m$}
\begin{align}
\ftil(x_{\itup_{1:j-1}}, x_{i_j}, x_{\itup_{j+1:d}}) &=  \ftil(x_{\itup_{1:j-1}}, x_{i_j -1}, x_{\itup_{j+1:d}}) \nonumber \\
&+ 
\left\{
\begin{array}{rl}
\diff_j \ghat(x_{\itup_{1:j-1}}, x_{i_j}, x_{\itup_{j+1:d}}) \ ; & \text{ if } \abs{\diff_j\ghat(x_{\itup_{1:j-1}}, x_{i_j}, x_{\itup_{j+1:d}})} < 1/2, \\
1 + \diff_j\ghat(x_{\itup_{1:j-1}}, x_{i_j}, x_{\itup_{j+1:d}}) \ ; & \text{ if } \diff_j \ghat(x_{\itup_{1:j-1}}, x_{i_j}, x_{\itup_{j+1:d}}) < -1/2, \\
-1 + \diff_j\ghat(x_{\itup_{1:j-1}}, x_{i_j}, x_{\itup_{j+1:d}}) \ ; & \text{ if } \diff_j \ghat(x_{\itup_{1:j-1}}, x_{i_j}, x_{\itup_{j+1:d}}) > 1/2.
\end{array} \right. \label{eq:mult_seq_ftil_exp}
\end{align}
\EndFor
\EndFor
\EndFor
\end{algorithmic}
\end{algorithm*}
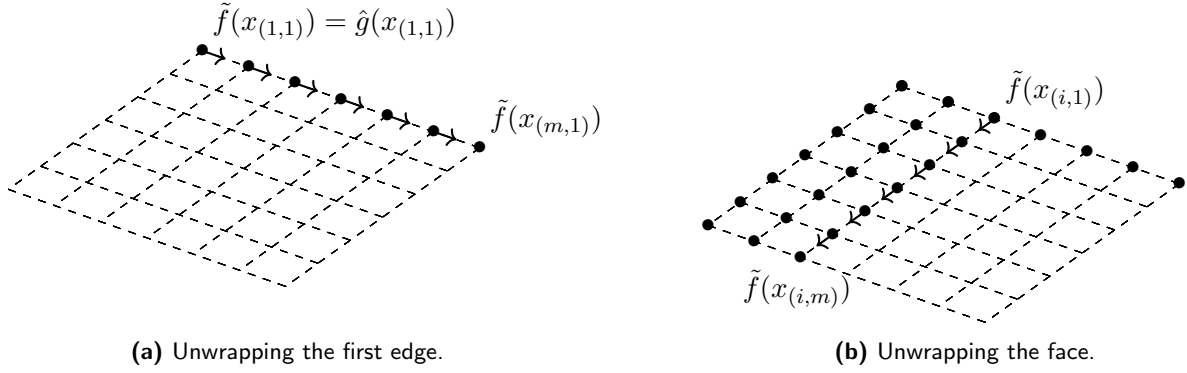
\begin{figure}[ht]
\centering
\begin{subfigure}[b]{0.45\textwidth}
\tdplotsetmaincoords{60}{125}
\tdplotsetrotatedcoords{0}{0}{0} 
\colorlet{lightblue}{blue!30}
\begin{tikzpicture}[scale=1.5,tdplot_rotated_coords,
                    cube/.style={very thick,black},
                    grid/.style={very thin,black},
                    axis/.style={->,blue,ultra thick},
                    rotated axis/.style={->,purple,ultra thick}]

    \foreach \x in {-0.5,0,...,2.5}
    {
        \foreach \y in {-0.5,0,...,2.5}
        {
            \draw[grid,dashed] (\x,-0.5) -- (\x,2.5);
            \draw[grid,dashed] (-0.5,\y) -- (2.5,\y);
        }
    }
    \foreach \y in {-0.5,0,...,2.}
    {
            \draw[->,thick] (-0.5,\y) -- (-0.5,\y+0.25);
    }
    \foreach \y in {-0.5,0,...,2.5}
    {
        \draw (-0.5,\y) node {\textbullet};
    }
    \draw (-0.5,-0.5) node[above right] 
{$\tilde{f}(x_{(1,1)}) = \hat{g}(x_{(1,1)})$};
    \draw (-0.5,2.5) node[above right]{$\tilde{f}(x_{(m,1)})$};

\end{tikzpicture}
\caption{Unwrapping the first edge.\label{fig:edge}}
\end{subfigure}
\hfill
\begin{subfigure}[b]{0.45\textwidth}
\tdplotsetmaincoords{60}{125}
\tdplotsetrotatedcoords{0}{0}{0} 
\colorlet{lightblue}{blue!30}

\begin{tikzpicture}[scale=1.5,tdplot_rotated_coords,
                    cube/.style={very thick,black},
                    grid/.style={very thin,black},
                    axis/.style={->,blue,ultra thick},
                    rotated axis/.style={->,purple,ultra thick}]
                    
    \foreach \x in {-0.5,0,...,2.5}
        \foreach \y in {-0.5,0,...,2.5}
        {
            \draw[grid,dashed] (\x,-0.5) -- (\x,2.5);
            \draw[grid, dashed] (-0.5,\y) -- (2.5,\y);
        }

        \foreach \x in {-0.5,0,...,2.}
    {
            \draw[->,thick] (\x,0.5) -- (\x+0.25,0.5);
    }
    \foreach \x in {-0.5,0,...,2.5}
    {
        \draw (\x,-0.5) node {\textbullet};
        \draw (\x,0.) node {\textbullet};
        \draw (\x,0.5) node {\textbullet};

    }
    \foreach \y in {-0.5,0,...,2.5}
    {
        \draw (-0.5,\y) node {\textbullet};
    }
    \draw (-0.5,0.5) node[above right] {$\tilde{f}(x_{(i,1)})$};
    \draw (2.55,0.5) node[below] {$\tilde{f}(x_{(i,m)})$};
\end{tikzpicture}
\caption{Unwrapping the face. \label{fig:face}}
\end{subfigure}
\caption{Illustration of the unwrapping algorithm in two dimensions. The unwrapping starts at the top corner in Figure~\ref{fig:edge}. Each arrow indicates an operation as given in \eqref{eq:mult_seq_ftil_exp} in Algorithm~\ref{algo:seq_unwrap_mult}, while dark dots are unwrapped samples. Once the edge is unwrapped, each row 'rooted' at this edge is unwrapped, as in Figure~\ref{fig:face}. In higher dimensions, the same procedure continues, by starting from the last unwrapped face.}
\end{figure}
Finally, we arrive at the following lemma which provides uniform error bounds for the estimates $\ftil(x_{\itup})$, $\forall x_{\itup} \in \calX$.
\begin{lemma} \label{lem:err_seq_unwrap_mult}
If $2\delta + \frac{\Lip}{m-1} < \frac{1}{2}$ then there exists $q^{\star} \in \mathbb{Z}$ such that
\begin{equation} \label{eq:err_bd_seq_unwrap_mult}
    \abs{\ftil(x_{\itup}) + q^{\star} - f(x_{\itup})} \leq \delta, \quad \forall x_{\itup} \in \calX.
\end{equation}
\end{lemma}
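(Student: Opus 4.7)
The plan is to mirror the strategy of Lemma~\ref{lem:err_seq_unwrap}, suitably adapted to the nested loop structure of Algorithm~\ref{algo:seq_unwrap_mult}. Define $q^{\star} := \qhat(x_{(1,\dots,1)}) \in \matZ$ to be the integer quotient associated with $\fhat(x_{(1,\dots,1)})$. The goal reduces to showing that
\[
\ftil(x_{\itup}) + q^{\star} = \fhat(x_{\itup}), \qquad \forall\, \itup \in [m]^d,
\]
since $|\fhat(x_{\itup}) - f(x_{\itup})| = |\eta_{\itup}| \leq \delta$ by construction of $\fhat$. The bound \eqref{eq:err_bd_seq_unwrap_mult} then follows immediately.

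I would establish this identity by a nested induction that tracks the loops of Algorithm~\ref{algo:seq_unwrap_mult}. The base case $\ftil(x_{(1,\dots,1)}) + q^{\star} = \fhat(x_{(1,\dots,1)})$ is immediate from the initialization $\ftil(x_{(1,\dots,1)}) = \ghat(x_{(1,\dots,1)})$ and the definition of $q^{\star}$. For the inductive step, I would argue dimension by dimension: assume that at the end of stage $j-1$ of the outer loop, the identity $\ftil(x_{\itup}) + q^{\star} = \fhat(x_{\itup})$ holds for every point of the form $\itup = (\itup_{1:j-1}, 1, 1, \dots, 1)$ with $\itup_{1:j-1} \in [m]^{j-1}$; for $j=1$ this reduces to the base case. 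Fixing any such $\itup_{1:j-1}$, I would then induct on $i_j$ from $1$ to $m$ to propagate the identity along the $j$-th coordinate, the outer hypothesis furnishing the case $i_j=1$. Once this inner induction is complete for every $\itup_{1:j-1}$, the identity holds at all points of the form $(\itup_{1:j}, 1, \dots, 1)$, which is exactly the outer hypothesis for stage $j+1$. Taking $j=d$ therefore covers all of $[m]^d$.

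The inner step on $i_j$ is essentially identical to the univariate argument. Assuming $\ftil(x_{\itup'}) + q^{\star} = \fhat(x_{\itup'})$ at $\itup' = (\itup_{1:j-1}, i_j-1, 1, \dots, 1)$, I would combine the update rule \eqref{eq:mult_seq_ftil_exp} with Lemma~\ref{lem:itoh_type_cond_mult} applied to coordinate $j$: the lemma identifies the $\pm 1$-adjusted finite difference of $\ghat$ (which is exactly the increment the algorithm adds) with $\diff_j \fhat$ at the corresponding point, under the standing hypothesis $2\delta + \Lip/(m-1) < 1/2$. Adding $\diff_j \fhat$ to the inductive hypothesis then yields the identity at $\itup'' = (\itup_{1:j-1}, i_j, 1, \dots, 1)$, closing the induction. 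I do not foresee any substantive obstacle beyond bookkeeping: every elementary update of the algorithm perturbs a single coordinate, and is therefore directly covered by Lemma~\ref{lem:itoh_type_cond_mult}.
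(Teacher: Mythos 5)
Your proposal is correct and follows essentially the same argument as the paper's: define $q^\star$ as the integer quotient of $\fhat$ at the origin of the grid, then establish $\ftil(x_\itup)+q^\star=\fhat(x_\itup)$ by a nested (dimension-by-dimension, then coordinate-by-coordinate) induction, with each inner step supplied by Lemma~\ref{lem:itoh_type_cond_mult}. The bookkeeping and the base/inductive structure are the same as in the paper's proof.
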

\begin{proof}
The proof is along the same lines as for Lemma~\ref{lem:err_seq_unwrap}. Denote $q^{\star} = \qhat(0,\dots,0) \in \mathbb{Z}$ to be the quotient term of $\fhat(0,\dots,0)$. We will show by induction that for every $1 \leq j \leq d$, 
\begin{equation} \label{eq:mult_ind_st}
    \ftil(x_{\itup_{1:j-1}}, x_{i_j}, x_{\itup_{j+1:d}}) + q^{\star} = \fhat(x_{\itup_{1:j-1}}, x_{i_j}, x_{\itup_{j+1:d}}), \quad \forall \itup \in [m]^{j-1} \times [m] \times \set{1}^{d-j},
\end{equation}
as the bound in \eqref{eq:err_bd_seq_unwrap_mult} then follows readily. Let us denote the term in braces in \eqref{eq:mult_seq_ftil_exp} by $a_{(i_j, i_j-1)}$.
\begin{enumerate}
    \item Consider $j = 1$. We will show by induction that \eqref{eq:mult_ind_st} is true for each $i_j \in [m]$. When $i_1 = 1$, then \eqref{eq:mult_ind_st} is trivially true by construction. When $i_1 > 1$, we have
    \begin{align*}
    \ftil(x_{i_1}, 0,\dots,0) + q^{\star} 
    &= \ftil(x_{i_1 - 1}, 0,\dots,0) + q^{\star} + a_{(i_1, i_1-1)} \qquad \text{ (using \eqref{eq:mult_seq_ftil_exp}) } \\
    &= \fhat(x_{i_1 - 1}, 0,\dots,0) + a_{(i_1, i_1-1)} \ \text{ (using the induction hypothesis on $i_1$) } \\
    &= \fhat(x_{i_1}, 0,\dots,0) \qquad \text{ (using Lemma~\ref{lem:itoh_type_cond_mult}) }.
\end{align*}

\item Now consider any $j > 1$ and assume that \eqref{eq:mult_ind_st} is true ``up to $j-1$''. Then for $i_j = 1$, this implies  
\begin{equation*}
 \ftil(x_{\itup_{1:j-1}}, \underbrace{x_{i_j}}_{= 0}, 0,\dots,0) + q^{\star} = \fhat(x_{\itup_{1:j-1}}, 0, 0,\dots,0)   
\end{equation*}
which verifies \eqref{eq:mult_ind_st}. Now we apply induction on $i_j$. If $i_j > 1$, then
\begin{align*}
    \ftil(x_{\itup_{1:j-1}}, x_{i_j}, 0,\dots,0) + q^{\star} 
    &= \ftil(x_{\itup_{1:j-1}}, x_{i_j-1}, 0,\dots,0) + q^{\star} + a_{(i_j, i_j-1)} \qquad \text{ (using \eqref{eq:mult_seq_ftil_exp}) } \\
    &= \fhat(x_{\itup_{1:j-1}}, x_{i_j-1}, 0,\dots,0) + a_{(i_j, i_j-1)}\\
    &= \fhat(x_{\itup_{1:j-1}}, x_{i_j}, 0,\dots,0) \qquad \text{ (using Lemma~\ref{lem:itoh_type_cond_mult}) },
\end{align*}
where we use for the second equality above the induction hypothesis on $i_j$.
This completes the proof.
\end{enumerate}
\end{proof}
The unwrapping procedure is summarized in Algorithm~\ref{algo:seq_unwrap_mult}. For notational convenience, the lines $5$ and $6$ have to be skipped whenever  $\itup$ is indexed by an empty set.

\rev{ 
\begin{remark}
In the recent literature, other sequential unwrapping algorithms have been proposed in~\cite{bhandari17} for univariate bandlimited functions, in~\cite{HDR_QN} for univariate functions generated by B-splines and in~\cite{ModuloRadon} for bivariate functions. These papers consider taking higher order finite differences of modulo samples and rely heavily on the function $f$ being sufficiently smooth. A main difference with the unwrapping procedure given above is that our approach only relies on first order differences of samples coming from a multivariate Lipschitz function while the aforementioned papers only consider at most bivariate functions with typically more stringent smoothness assumptions.
\end{remark}
}
%

%
\subsection{Main result: Putting it together} \label{subsec:main_res_unwrap_samps}
We can combine the results of Corollary~\ref{cor:knn_insample_rates} and Lemma~\ref{lem:err_seq_unwrap_mult} to provide a \rev{statistical} guarantee on the recovering of samples of $f$ given noisy mod $1$ samples by following the procedure described in Algorithm~\ref{algo:Main}.
\begin{theorem}[Main result] \label{thm:main_thm_unwrap_mod1}
Let $\sigma\leq \frac{1}{2\pi}$ and $m^d = n\geq 2^d$ and let \[\delta(n) = 6 (8\pi \Lip)^{\frac{d}{d+2}}\left(32\left(\frac{4\pi^2\sigma^2+2}{3}+\pi\sigma \right)\right)^{\frac{2}{d+2}} \left(\frac{\log n}{n}\right)^{\frac{1}{d+2}}.\]
If $\delta(n) \leq 2$, then with probability at least $1-1/n$, Algorithm~\ref{algo:kNNregression} yields denoised mod $1$ estimates $\est{g}(x_\itup)$ such that
\begin{equation} \label{eq:mod_den_fin_bd}
 d_w\Big(\est{g}(x_\itup) , g(x_\itup)\Big) \leq \frac{1}{4}\delta(n), \quad \itup\in [m]^d.
\end{equation}
Furthermore, if $\delta(n) + \frac{2\Lip}{m-1} < 1$ then there exists $q^{\star} \in \mathbb{Z}$ and $\tilde{f}(x_\itup)$ given by Algorithm~\ref{algo:seq_unwrap_mult}  such that
\begin{equation} \label{eq:err_bd_seq_unwrap_mult_discrete}
    \abs{\ftil(x_{\itup}) + q^{\star} - f(x_{\itup})} \leq \frac{1}{4}\delta(n), \quad \forall \itup \in [m]^d.
\end{equation}
\end{theorem}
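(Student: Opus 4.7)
The plan is to combine the in-sample $\ell_\infty$ denoising bound from Corollary~\ref{cor:knn_insample_rates} with the stability of the sequential unwrapping in Lemma~\ref{lem:err_seq_unwrap_mult}. Since the theorem is explicitly phrased as putting these two pieces together, the argument should be short, with most of the work consisting in verifying that the hypotheses line up with the constants in the two intermediate results.

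First I would establish \eqref{eq:mod_den_fin_bd}. Under the standing hypotheses $\sigma \leq \frac{1}{2\pi}$ and $n \geq 2^d$, Corollary~\ref{cor:knn_insample_rates} applies with the choice of $k = \ceil{k^\star}$ appearing in Algorithm~\ref{algo:kNNregression}, and produces with probability at least $1 - 1/n$ the uniform estimate $|\est{h}_k(x_\itup) - h(x_\itup)| \leq \gamma \bigl(\frac{\log n}{n}\bigr)^{1/(d+2)}$ for all $\itup \in [m]^d$, where $\gamma = 6(8\pi \Lip)^{d/(d+2)}(32(\tfrac{4\pi^2\sigma^2+2}{3}+\pi\sigma))^{2/(d+2)}$. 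By construction $\gamma \bigl(\frac{\log n}{n}\bigr)^{1/(d+2)} = \delta(n)$, so the assumption $\delta(n) \leq 2$ is exactly the condition needed to invoke the second part of Corollary~\ref{cor:knn_insample_rates}, which translates the chordal bound on $\est{h}_k$ into the wrap-around bound $d_w(\est{g}(x_\itup), g(x_\itup)) \leq \frac{1}{4}\delta(n)$ via Fact~\ref{Fact:Wrap-around}. This gives \eqref{eq:mod_den_fin_bd}.

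Next I would deduce \eqref{eq:err_bd_seq_unwrap_mult_discrete}. On the same high-probability event, the denoised samples $\est{g}(x_\itup)$ obey $d_w(\est{g}(x_\itup), g(x_\itup)) \leq \delta := \frac{1}{4}\delta(n)$ uniformly, which is precisely the hypothesis of Lemma~\ref{lem:err_seq_unwrap_mult} with this choice of $\delta$. The condition required there is $2\delta + \frac{\Lip}{m-1} < \frac{1}{2}$, i.e.\ $\frac{1}{2}\delta(n) + \frac{\Lip}{m-1} < \frac{1}{2}$, which is equivalent to the assumption $\delta(n) + \frac{2\Lip}{m-1} < 1$ in the statement. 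Hence Algorithm~\ref{algo:seq_unwrap_mult}, fed with $\est{g}(x_\itup)$, yields $\ftil(x_\itup)$ and an integer $q^\star \in \mathbb{Z}$ such that $|\ftil(x_\itup) + q^\star - f(x_\itup)| \leq \delta = \frac{1}{4}\delta(n)$ uniformly in $\itup$, which is \eqref{eq:err_bd_seq_unwrap_mult_discrete}.

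There is no real obstacle here; the delicate point, if any, is just bookkeeping — checking that $\gamma \bigl(\frac{\log n}{n}\bigr)^{1/(d+2)}$ coincides with the $\delta(n)$ declared in the theorem, and that the Itoh-type threshold $2\delta + \frac{\Lip}{m-1} < \frac{1}{2}$ is exactly the hypothesis $\delta(n) + \frac{2\Lip}{m-1} < 1$ once we substitute $\delta = \frac{1}{4}\delta(n)$. Both checks are immediate, so the theorem follows by chaining the two probabilistic and deterministic results on the single good event of probability at least $1 - 1/n$.
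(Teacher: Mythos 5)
Your proof takes exactly the same route as the paper's: invoke Corollary~\ref{cor:knn_insample_rates} to get the uniform wrap-around bound $d_w(\est{g}(x_\itup),g(x_\itup))\leq \tfrac{1}{4}\delta(n)$ on the high-probability event, then feed this into Lemma~\ref{lem:err_seq_unwrap_mult} with $\delta = \tfrac{1}{4}\delta(n)$ and observe that $2\delta + \tfrac{M}{m-1}<\tfrac12$ is exactly $\delta(n)+\tfrac{2M}{m-1}<1$. The bookkeeping you describe ($\gamma(\log n/n)^{1/(d+2)}=\delta(n)$ and the threshold rescaling) is correct.

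The one thing you gloss over: Corollary~\ref{cor:knn_insample_rates} does not apply under $\sigma\leq\tfrac{1}{2\pi}$ and $n\geq 2^d$ alone; it also requires the sample-size condition $\tfrac{n}{\log n}\geq \bigl(\tfrac{\pi M}{2d(\frac{4\pi^2\sigma^2+2}{3}+\pi\sigma)}\bigr)^d$ (needed so that $\ceil{k^\star}\geq \log n$). The paper checks explicitly that $\delta(n)\leq 2$ — equivalently $\tfrac{n}{\log n}\geq 3^{d+2}(8\pi M)^d\bigl(32(\tfrac{4\pi^2\sigma^2+2}{3}+\pi\sigma)\bigr)^2$ — is a stricter condition and therefore subsumes it, so no extra hypothesis is needed. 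You should include that verification; otherwise your appeal to the corollary is not quite licensed as written, even though the conclusion is unaffected.
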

\begin{proof}
The result is obtained by using Corollary~\ref{cor:knn_insample_rates} and subsequently, by choosing $\delta$ in Lemma~\ref{lem:err_seq_unwrap_mult} as $\frac{1}{4}\delta(n)$. Note that the condition $\delta(n) \leq 2$ is equivalent to 
\begin{equation*}
    \frac{n}{\log n} \geq 3^{d+2} (8\pi \Lip)^d \left(32(\frac{4\pi^2\sigma^2+2}{3}+\pi\sigma)\right)^2
\end{equation*}
which is stricter than the condition $\frac{n}{\log n} \geq \left(\frac{\pi \Lip}{2d(\frac{4\pi^2\sigma^2+2}{3}+\pi\sigma)} \right)^{d}$ stated in Corollary~\ref{cor:knn_insample_rates}.
\end{proof}
This result indicates indeed that if the number of samples $n$ is large enough, the denoising process yields a sufficiently good estimate of the noiseless mod 1 signal so that the unwrapping procedure achieves a good estimate of the ground truth signal.

\begin{algorithm*}[!ht]
\caption{Denoising and unwrapping modulo samples with kNN regression} \label{algo:Main} 
\begin{algorithmic}[1] 
\State \textbf{Input:} integer $k>0$ and uniform grid $\calX \subset [0,1]^d$, $\abs{\calX} = n = m^d$; noisy modulo samples $y_\itup = (f(x_\itup)+\eta_\itup) \mod 1$ for all $\itup\in [m]^d$.
\State \textbf{Output:} unwrapped denoised modulo samples $\tilde{f}(x_\itup)$ for all $\itup\in[m]^d$.
\State Denoising step: Input $(y_\itup)_{\itup\in [m]^d}$ in Algorithm~\ref{algo:kNNregression} to get denoised mod $1$ samples $(\est{g}(x_\itup))_{ x_{\itup} \in \calX}$.
\State Unwrapping step: Input $(\est{g}(x_\itup))_{ x_\itup \in \calX}$ in Algorithm~\ref{algo:seq_unwrap_mult}  to yield $(\ftil(x_{\itup}))_{ x_{\itup} \in \calX}$.%
\end{algorithmic}
\end{algorithm*}

\rev{
\begin{remark} 
Denoting $y_{\itup} = (f(x_\itup) + \eta_i) \bmod 1$ to be the noisy input modulo samples where $\eta_{\itup} \sim \mathcal{N}(0,\sigma^2)$ i.i.d for $\itup \in [m]^d$, it is not difficult to show that if $\frac{\log n}{ \sqrt{n}} \lesssim \sigma \lesssim 1$, then w.h.p,
\begin{equation} \label{eq:input_mod1_lw_bd}
    \max_{\itup \in [m]^d} d_w(y_\itup, f(x_\itup) \bmod 1) \geq 
    \frac{\sigma}{2}.
\end{equation}
Comparing \eqref{eq:input_mod1_lw_bd} with \eqref{eq:mod_den_fin_bd}, we see that for a constant noise level $\sigma$ (with $\sigma \lesssim 1$), the bound in \eqref{eq:mod_den_fin_bd} becomes smaller than in \eqref{eq:input_mod1_lw_bd} when $n$ becomes sufficiently large. The same conclusion holds if $\sigma \rightarrow 0$ at a ``sufficiently slow'' rate as $n \rightarrow \infty$. In order to establish \eqref{eq:input_mod1_lw_bd}, we note that $\norm{z - h}_{\infty}^2 \geq \frac{1}{n}\norm{z-h}_2^2$ along with the fact \cite[Lemma 3(iii)]{tyagi20} that $\norm{z-h}_2^2 \geq \pi^2 \sigma^2 n$ holds w.h.p if $\frac{72 \log n}{\pi \sqrt{n}} \leq \sigma \leq \frac{1}{2\sqrt{2} \pi}$. This yields the bound $\norm{z-h}_{\infty} \geq \pi \sigma$. Finally, as seen in the proof of Fact \ref{Fact:Wrap-around}, 
\begin{equation*}
    \frac{1}{2} \abs{z_\itup - h_{\itup}} = \sin (\pi d_w(y_\itup, f(x_\itup) \bmod 1)) \leq \pi d_w(y_\itup, f(x_\itup) \bmod 1), \quad \forall \itup \in [m]^d,
\end{equation*}
which readily yields \eqref{eq:input_mod1_lw_bd}.
\end{remark}
}

%
\subsection{Estimating the function $f$} \label{subsec:estim_f}
\rev{Given the estimates $\ftil(x_{\itup})$ of $f(x_\itup)$ satisfying the error bound in \eqref{eq:err_bd_seq_unwrap_mult_discrete} uniformly over each $x_{\itup} \in \calX$, we now show that this readily leads to an estimate $\est{f}$ of $f$ with a bound on the error $\norm{\est{f} + q^* - f}_{\infty}$. Here, $\norm{g}_{\infty}$ denotes the $L_{\infty}$ norm for $g \in C([0,1]^d)$, where $C([0,1]^d)$ corresponds to the space of continuous functions with the domain $[0,1]^d$.}
\rev{\paragraph{Quasi-interpolants.} The construction of the estimate $\est{f}$ can be accomplished using classical tools from approximation theory, namely spline based tensor product quasi-interpolant operators \cite{deBoor1990}. These are linear operators $\calQ_m: C([0,1]^d) \rightarrow C([0,1]^d)$ such that $\calQ_m (g)$ depends only on the values of $g$ on the grid $\calX$. While a full discussion on the construction of such operators is outside the scope of the paper, there are many texts providing a detailed overview in this regard (e.g., \cite{devore_book, deBoor1990}). For the present discussion, we highlight certain important properties possessed by $\calQ_m$ which are relevant to our setup. These are adapted from \cite[Section 2]{Devore2011} which also provides a more general discussion on these operators. }

\begin{enumerate}
    \item \rev{There exists an absolute constant $C > 0$ such that 
    \begin{equation} \label{eq:prop1_quasi}
        \norm{\calQ_m(g)}_{\infty} \leq C \max_{x \in \calX} \abs{g(x)}
    \end{equation}
    for each $m=1,2,\dots,$}
    
    \item \rev{If $g$ is a constant, then $\calQ_m(g) \equiv g$.}
    
    \item \rev{Denote $B^s_{p,q}([0,1]^d)$ to be the Besov space of functions $g:[0,1]^d \rightarrow \matR$ with smoothness $s>0$ and $1 \leq p,q \leq \infty$ (see for e.g., \cite{DevorePopov}). There exists $C_{d,s} > 0$ (depending only on $d,s$) such that if $g \in B^s_{\infty,\infty}([0,1]^d)$, then 
        \begin{equation} \label{eq:prop3_quasi_besov}
      \norm{\calQ_m(g) - g}_{\infty} \leq C_{d,s} m^{-s} = C_{d,s} n^{-s/d}.
    \end{equation}
   }
\end{enumerate}

\begin{remark}
\rev{For $\kappa \in \mathbb{N}_0$ and $r \in (0,1]$, the  H\"older space $C^{\kappa, r}([0,1]^d)$ is the collection of functions $g:[0,1]^d \to \mathbb{R}$ for which the partial derivatives $D^\beta g$ are H\"older continuous (for all $|\beta| = \kappa$) with coefficient $0< r \leq 1$, i.e.,
\begin{equation*}
    \abs{D^\beta g (x) - D^\beta g (x')} \leq M\norm{x - x'}^r_{\infty}, \quad \forall x,x' \in [0,1]^d 
\end{equation*}
for some $M > 0$. It is well known, and not difficult to verify, that if $s \geq 1$ is an integer, then $C^{s-1, 1}([0,1]^d) \subseteq B^s_{\infty,\infty}([0,1]^d)$. Moreover, if $s=\kappa + r$ (with $\kappa \in \mathbb{N}_0$, $0 < r < 1$) is not an integer, then $C^{\kappa,r}([0,1]^d) \subseteq B^s_{\infty,\infty}([0,1]^d)$. This is shown in \cite[Chapter 2]{devore_book} for $d=1$ but extends readily to $d \geq 1$ as well. This means that,  for functions $g \in C^{\kappa, r}([0,1]^d)$, \eqref{eq:prop3_quasi_besov} implies that $\norm{\calQ_m(g) - g}_{\infty} \lesssim n^{-\frac{\kappa+r}{d}}$, and in fact, this rate is also optimal (see for e.g., \cite[Section 1.3.9]{Novak1988}).} 

\rev{For the Lipschitz class $C^{0,1}([0,1]^d)$, the above discussion implies that there exists $C_d > 0$ (depending only on $d$) such that if $g \in C^{0, 1}([0,1]^d)$, then
    \begin{equation} \label{eq:prop3_quasi_lip}
      \norm{\calQ_m(g) - g}_{\infty} \leq C_{d} m^{-1} = C_{d} n^{-1/d}.
    \end{equation}
    }
\end{remark}

\rev{\paragraph{Estimating $f$.} Equipped with the above discussion, we can form the estimate $\est{f} = \calQ_m(\ftil)$, where $\ftil \in C([0,1]^d)$ takes the value $\ftil(x_\itup)$ for each $x_\itup \in \calX.$ This leads to the following theorem which states that up to a global shift, $\est{f}$ is uniformly close to $f$.}

\rev{\begin{theorem} \label{thm:main_unwrap_err_f}
Under the notations and assumptions of Theorem \ref{thm:main_thm_unwrap_mod1}, suppose that the bound in \eqref{eq:err_bd_seq_unwrap_mult_discrete} holds. For a quasi-interpolant operator $\calQ_m$, define $\est{f} := \calQ_m(\ftil)$ depending only on the values $\ftil(x_{\itup})$ for $x_\itup \in \calX$. We then have 
\begin{equation*}
    \norm{\est{f} + q^* - f}_{\infty} \leq C_d n^{-1/d} + C\frac{\delta(n)}{4}
\end{equation*}
where $C, C_d$ are as in \eqref{eq:prop1_quasi},  \eqref{eq:prop3_quasi_lip}. 
\end{theorem}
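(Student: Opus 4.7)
The plan is to exploit three properties of the quasi-interpolant $\calQ_m$ in combination: linearity, reproduction of constants, and the two norm estimates~\eqref{eq:prop1_quasi} and~\eqref{eq:prop3_quasi_lip}. The strategy is to introduce and subtract $\calQ_m(f)$ so that the error naturally splits into an approximation-theoretic piece (which only depends on $f$ and its smoothness) and a sampling-error piece (which only depends on the discrete errors $\ftil(x_\itup) + q^{\star} - f(x_\itup)$ controlled by Theorem~\ref{thm:main_thm_unwrap_mod1}).

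Concretely, I would proceed in three short steps. First, since $\calQ_m$ is linear and reproduces constants, $\calQ_m(\ftil) + q^{\star} = \calQ_m(\ftil + q^{\star})$, hence
\begin{equation*}
\est{f} + q^{\star} - f \;=\; \calQ_m(\ftil + q^{\star} - f) \;+\; \calQ_m(f) - f .
\end{equation*}
Second, I apply the stability bound~\eqref{eq:prop1_quasi} to the function $\ftil + q^{\star} - f$, which is defined through its values on the grid $\calX$ and coincides there with the discrete error controlled in~\eqref{eq:err_bd_seq_unwrap_mult_discrete}. This gives
\begin{equation*}
\norm{\calQ_m(\ftil + q^{\star} - f)}_{\infty} \;\leq\; C \max_{x_\itup \in \calX}\abs{\ftil(x_\itup) + q^{\star} - f(x_\itup)} \;\leq\; C \, \frac{\delta(n)}{4}.
\end{equation*}
Third, since $f \in C^{0,1}([0,1]^d)$, the approximation estimate~\eqref{eq:prop3_quasi_lip} yields $\norm{\calQ_m(f) - f}_{\infty} \leq C_d \, n^{-1/d}$. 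Putting the two bounds together with the triangle inequality gives the claimed inequality.

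There is essentially no hard obstacle here: the only subtlety is to make sure that linearity and constant reproduction of $\calQ_m$ can be combined to pull the global integer shift $q^{\star}$ through the operator, so that the stability bound is applied to a function whose grid values are genuinely small (not to $\ftil$ itself, whose grid values are close to $f - q^{\star}$ and thus can be of order $\norm{f}_\infty$). Once this decomposition is in place, the proof reduces to invoking the two standard properties of quasi-interpolants recalled in Section~\ref{subsec:estim_f}.
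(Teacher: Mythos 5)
Your proof is correct and takes essentially the same route as the paper: both decompose $\est{f}+q^{\star}-f$ into $\calQ_m(\ftil+q^{\star}-f)$ and $\calQ_m(f)-f$ using linearity and constant reproduction, then apply \eqref{eq:prop1_quasi} and \eqref{eq:prop3_quasi_lip} respectively. The paper's $\triangle$ is exactly your $\ftil+q^{\star}-f$, so the arguments coincide.
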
}
\begin{proof}
\rev{Let $\ftil, \triangle \in C([0,1]^d)$ be such that $\ftil(x) + q^*= f(x) + \triangle(x)$ for all $x \in [0,1]^d$, with $\abs{\triangle(x)} \leq \frac{\delta(n)}{4}$ for each $x \in \calX$. Then by linearity of $\calQ_m$, and since $\calQ_m(q^*) = q^*$, we have
\begin{equation*}
    \calQ_m(\ftil) + q^* = \calQ_m(f) + \calQ_m(\triangle).
\end{equation*}
Subtracting $f$ from both sides, and applying triangle inequality, the statement follows easily using \eqref{eq:prop1_quasi},  \eqref{eq:prop3_quasi_lip}.
}
\end{proof}
\rev{\begin{remark}
Since $\delta(n)$ dominates the $n^{-1/d}$ term, the error bound in Theorem \ref{thm:main_unwrap_err_f} can be written in the simplified manner
\begin{equation*}
 \norm{\est{f} + q^* - f}_{\infty} \leq C(d,\sigma,M) \left(\frac{\log n}{n}\right)^{\frac{1}{d+2}}   
\end{equation*}
Hence the function $f$ is estimated at the minimax optimal rate for Lipschitz functions in the $L_{\infty}$ norm over the cube $[0,1]^d$ (using a uniform grid), see for e.g., \cite[Theorem 1.3.1]{nemirovski2000topics}.
\end{remark}
}

\section{Denoising mod $1$ samples on a graph with an SDP relaxation} \label{sec:sdp_analysis}
This section formally analyzes a SDP approach for denoising modulo $1$ samples which was recently proposed by Cucuringu and Tyagi \cite{CMT18_long}. We begin by formally outlining the problem setup.
\subsection{Problem setup} \label{subsec:prob_sdp}
Let $G = ([n], E)$ be a connected, undirected graph where $E \subseteq \set{\set{i,j}: i \neq j \in [n]}$ denotes its set of edges. Denote the degree of vertex $p$ by $d_p$, the maximum degree of $G$ by $\triangle := \max_{p} d_p$, and the (combinatorial) Laplacian matrix associated with $G$ by $L \in \matR^{n \times n}$. Let $h \in \bbT_n$ be an unknown ground truth signal which is smooth w.r.t $G$ in the sense that 
\begin{align} 
\smooth_n := \max_{\set{i,j} \in E} \abs{h_i - h_j} \label{eq:hsmooth}  
\end{align}
is ``small'', where $\smooth_n > 0$ depends on $n$. Ideally, we will be interested in the setting where $\smooth_n \rightarrow 0$ as $n \rightarrow \infty$. For example, in the setting of Section \ref{sec:knn_analysis} with $d = 1$, we may choose $G$ to be the path graph where $E = \set{\set{i,i+1}: i=1,\dots,n-1}$. Then $\triangle = 2$ and $\smooth_n = 2\pi L / n$, the latter obtained from Fact \ref{Fact:Lipschiz}.

Given information about $h$ in the form of noisy $z \in \bbT_n$ (cfr. \eqref{eq:z}), our goal is to identify conditions under which \eqref{prog:sdp_relax} is a \emph{tight} relaxation of \eqref{prog:qcqp}, i.e., the solution to \eqref{prog:sdp_relax} is of rank $1$. As we will see below, this will lead to the global solution $\gest$ of \eqref{prog:qcqp}, and in particular, we would have obtained $\gest$ in polynomial time. 
\begin{remark}
 In the notation of Section \ref{sec:knn_analysis}, Cucuringu and Tyagi \cite{CMT18_long} considered a specific class of graphs $G = ([m]^d, E)$ where
$E = \set{\set{\itup,\jtup}: \itup,\jtup \in [m]^d, \quad \itup \neq \jtup, \quad \norm{\itup - \jtup}_{\infty} \leq k}$, for some integer $k > 0$. 
For such graphs, we have $\triangle = (2k+1)^d - 1$. Moreover, no analysis was provided concerning the tightness of \eqref{prog:sdp_relax}.
\end{remark}
%
%
%
%

The SDP relaxation of \eqref{prog:qcqp} can be derived\footnote{The steps leading to the relaxation are explained more clearly here as opposed to \cite{CMT18_long}.} as follows. Denoting
\begin{eqnarray} \label{eq:Tdef}
T = \begin{pmatrix}
  \lambda L \quad & -z \\ 
  -z^* \quad & 0  
 \end{pmatrix} , \quad
W = \begin{pmatrix}
  gg^* \quad & g \\ 
  g^* \quad & 1  
\end{pmatrix}; \ g \in \mathbb{T}_n,
\end{eqnarray}
the objective of (QCQP) is simply $\Tr(TW)$. Note that $W$ is a rank-$1$ positive semi-definite (p.s.d) matrix with $W_{ii} = 1$ for each $i$. In fact, it is easy to see that any rank-$1$, p.s.d matrix $W' \in \mathbb{C}^{(n+1) \times (n+1)}$ with $W'_{ii} = 1$ will be of the form
\begin{eqnarray} \label{eq:rank_1_psd_form}
W' = \begin{pmatrix}
  g' g'^{*} \quad & g' \\ 
  g'^{*} \quad & 1  
\end{pmatrix}; \quad g' \in \mathbb{T}_n.
\end{eqnarray}
Hence \eqref{prog:qcqp} is equivalent to 
\begin{align} 
\min_{W \in \mathbb{C}^{(n+1) \times (n+1)}} \Tr(T W) \quad \text{ s.t } \quad W \succeq 0, \ \text{rank}(W) = 1, \ W_{ii} = 1 \label{prog:sdp_hard} 
\end{align}
and we obtain a solution $g$ of \eqref{prog:qcqp} as the first $n$ entries of the last column of a solution $W$ of \eqref{prog:sdp_hard}. 
Problem \eqref{prog:sdp_hard} is non-convex (due to the rank constraint) and is in general NP-hard if $T$ is an arbitrary\footnote{In our case, $T$ is a specific matrix involving the Laplacian $L$ so it is not clear if it is NP hard.} Hermitian matrix \cite[Proposition 3.3]{zhang06}. By dropping the rank constraint, we finally arrive at \eqref{prog:sdp_relax}. 

Due to the equivalence of \eqref{prog:sdp_hard} and \eqref{prog:qcqp}, we can see that if $X$ is a solution of \eqref{prog:sdp_hard} and has rank $1$, then it will be of the form in \eqref{eq:rank_1_psd_form} where $g'$ is a solution of \eqref{prog:qcqp}.

\subsection{$\ell_{\infty}$ error bound for \eqref{prog:qcqp}}
To begin with, we will prove the following $\ell_{\infty}$ stability bound for any solution $\gest \in \bbT_n$ of \eqref{prog:qcqp}. This stability result will then be employed later in Section~\ref{sec:sdp_tightness} for proving the main result, namely Theorem~\ref{thm:main_sdp_relax_tight}.
\begin{theorem} \label{thm:main_linf_qcqp}
Assume that the observation $z \in \bbT_n$ satisfies $\norm{z - h}_{\infty} \leq \delta$. If $\lambda \triangle < \sqrt{2}$ holds, then any solution $\gest$ of \eqref{prog:qcqp} satisfies the bound
\begin{align*}
 \norm{\gest - h}_{\infty}^2 \leq \frac{2\delta + \delta^2 + \lambda\triangle(\smooth_n^2 + \sqrt{2})}{1-\frac{\lambda\triangle}{\sqrt{2}}}.    
\end{align*}
\end{theorem}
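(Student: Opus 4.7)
The argument is a coordinate-wise optimality comparison. For each $i \in [n]$, introduce the swap vector $g^{(i)} \in \bbT_n$ that agrees with $\gest$ at every coordinate $k \neq i$ and satisfies $g^{(i)}_i = h_i$. Since $\gest$ is a global minimizer of $F(g) := \lambda g^* L g - 2\real(g^* z)$ over $\bbT_n$, we have $F(g^{(i)}) \geq F(\gest)$. Setting $w := \gest - h$ and expanding $F(g^{(i)}) - F(\gest)$ using $|\gest_i| = |h_i| = 1$ (hence $\real(\bar w_i h_i) = -|w_i|^2/2$) together with the identity $|h_i - \gest_k|^2 - |\gest_i - \gest_k|^2 = 2\,\real(\bar w_i \gest_k)$, one obtains the local inequality
\begin{equation*}
(1 + \lambda d_i)\,|w_i|^2 \;\leq\; 2\lambda\,\real\!\Big(\bar w_i\! \sum_{k \sim i}(h_k - h_i)\Big) + 2\lambda\,\real\!\Big(\bar w_i\! \sum_{k \sim i} w_k\Big) + 2\,\real\big(\bar w_i (z_i - h_i)\big),
\end{equation*}
valid at every vertex $i$.

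I would then bound each of the three inner products on the right. The triangle inequality gives $|\sum_{k\sim i}(h_k - h_i)| \leq d_i \smooth_n$ (by definition of $\smooth_n$), $|\sum_{k\sim i}w_k| \leq d_i \|w\|_\infty$ (degree bound), and $|z_i - h_i| \leq \delta$ (noise hypothesis). To each of the resulting cross-products of the form $2\lambda |w_i|\cdot(\text{quantity})$, I apply Young's inequality $2ab \leq \alpha a^2 + b^2/\alpha$ with carefully chosen weights. The weight on the bias term is taken of order $\lambda d_i/\sqrt 2$, which is precisely what produces the factor $1 - \lambda\triangle/\sqrt 2$ in the eventual denominator and simultaneously yields the coefficient $\lambda d_i$ in front of $\smooth_n^2$ on the right. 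The noise cross-product is handled by combining AM--GM ($2ab \leq a^2 + b^2$, yielding $\delta^2$) with the trivial bound $|w_i|\leq 2$ (yielding the $2\delta$ summand).

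Finally, I specialize to $i^*$, a vertex at which $|w_i|$ attains its maximum. There $|w_{i^*}|=\|w\|_\infty$, so the graph-variation term becomes a multiple of $\|w\|_\infty^2$. Collecting the residual coefficients and using $d_{i^*} \leq \triangle$ yields a scalar inequality of the form
\begin{equation*}
\Big(1 - \tfrac{\lambda\triangle}{\sqrt 2}\Big)\,\|w\|_\infty^2 \;\leq\; 2\delta + \delta^2 + \lambda\triangle\bigl(\smooth_n^2 + \sqrt 2\bigr),
\end{equation*}
and the hypothesis $\lambda \triangle < \sqrt 2$ ensures strict positivity of the prefactor, so dividing through produces the claimed bound.

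\paragraph{Main obstacle.} The conceptual content (coordinate-wise optimality plus the decomposition into smoothness / graph-variation / noise pieces) is elementary; the only delicate step is the Young's-inequality bookkeeping. The three weights must be tuned simultaneously so that the bias coefficient reads $\lambda\triangle\smooth_n^2$, the prefactor of $\|w\|_\infty^2$ reads $1 - \lambda\triangle/\sqrt 2$, and the noise contribution collapses to $2\delta + \delta^2$. This trade-off is what loosens the noise term relative to the sharper $\delta^2$ attainable by pure AM--GM, while enlarging the admissible range of $\lambda\triangle$ from the naive $\lambda \triangle < 1$ to $\lambda\triangle < \sqrt 2$. The resulting bound is deterministic and requires no probabilistic assumption on the noise beyond uniform boundedness.
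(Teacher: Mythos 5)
Your setup through the local inequality is exactly the paper's first move, and that inequality is algebraically correct: the swap vector $g^{(i)}$ is the paper's $\tilde g$, and expanding $F(g^{(i)})-F(\hat g)\geq 0$ does give
$(1+\lambda d_i)|w_i|^2 \le 2\lambda\,\real(\bar w_i\sum_{k\sim i}(h_k-h_i)) + 2\lambda\,\real(\bar w_i\sum_{k\sim i}w_k) + 2\,\real(\bar w_i(z_i-h_i))$.
The gap is in the bookkeeping that follows: it cannot yield the stated bound. The problem is the graph-variation term. At the maximizing vertex $i^*$ (with $|w_{i^*}|=W=\|w\|_\infty$), the only available bound is $2\lambda\,\real(\bar w_{i^*}\sum_{k\sim i^*}w_k) \le 2\lambda d_{i^*}W^2$; moving this to the left you are left with a prefactor $(1-\lambda d_{i^*})W^2$, which is only useful when $\lambda\triangle<1$ — strictly narrower than the theorem's $\lambda\triangle<\sqrt 2$. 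No choice of Young weights can repair this, because the offending term is already a pure $W^2$; the weight of order $\lambda d_i/\sqrt 2$ that you propose applies to the \emph{bias} cross-product, not to the graph-variation cross-product, so it does not touch the overextended $2\lambda d_{i^*}W^2$. In addition, the noise claim is not quite right: $2\real(\bar w_i(z_i-h_i))\le 2|w_i|\delta$, and from $|w_i|\le 2$ alone one gets $4\delta$, not $2\delta+\delta^2$ (which is \emph{smaller} when $\delta<2$); there is no way to combine AM--GM with $|w_i|\le 2$ to reach $2\delta+\delta^2$.

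The missing idea is the paper's different decomposition. Instead of splitting $\hat g_k = h_k + w_k$ and then taking products against $|w_{i^*}|$, the paper bounds $\real(\bar w_i \hat g_k)$ directly as
$\real(\bar w_i\hat g_k)=\real(\bar{\hat g}_i\hat g_k)-\real(\bar h_i\hat g_k)\le 1 - \big(\real(\bar h_i h_k)+\real(\bar h_i(\hat g_k-h_k))\big) \le \smooth_n^2/2 + |w_k|$,
which is \emph{linear} in $W$, not quadratic. Similarly the noise term is handled as $\real(\bar w_i z_i)=\real(\bar{\hat g}_i z_i)-\real(\bar h_i z_i)\le (\real(\bar{\hat g}_i h_i)+\delta)-(1-\delta^2/2)$, using $|\hat g_i|=|h_i|=1$ rather than $|w_i|\le 2$, yielding the clean $\delta+\delta^2/2$. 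These give the scalar inequality $W^2\le 2\delta+\delta^2+\lambda\triangle\smooth_n^2+2\lambda\triangle W$, and the factor $\sqrt 2$ in the denominator then comes from linearizing the \emph{lone} term $2\lambda\triangle W$ via $W\le \tfrac{1}{\sqrt 2}+\tfrac{W^2}{2\sqrt 2}$ (equivalently $\sqrt{1-\varepsilon}\le 1-\varepsilon/2$ with $W^2=2(1-\varepsilon)$), not from a Young weight. So the conceptual framework you outline is the right one, but the $\smooth_n$/noise/graph-variation decomposition and the source of the $\sqrt 2$ must be redone along these lines for the argument to close.
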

\begin{proof}[Proof of Theorem~\ref{thm:main_linf_qcqp}]
For any given $p \in [n]$, consider $\gtil \in \bbT_n$ of the form\footnote{This idea of constructing $\gtil$ is taken from the proof of Lemma $4.2$ in \cite{Bandeira2017}.} $\gtil = \gest + (h_p - \gest_p) e_p$, i.e., 
\begin{equation*}
\gtil_q = \left\{
\begin{array}{rl}
h_q \quad & \text{if } q=p, \\
\gest_q \quad & \text{otherwise,}
\end{array} \right. \quad \text{for} \quad q=1,\dots,n.
\end{equation*}
Clearly $\gtil$ is feasible for \eqref{prog:qcqp}. Since $\gest$ is optimal, we obtain the inequality
\begin{align} \label{eq:linf_qcqp_1}
 \lambda \gest^* L \gest - 2\real(\gest^* z) &\leq \lambda \gtil^* L \gtil - 2\real(\gtil^* z)  \nonumber \\
 \Leftrightarrow \quad \lambda(\gest^* L \gest  - \gtil^* L \gtil) &\leq 2 \real((\gest - \gtil)^* z).
\end{align}
The LHS of \eqref{eq:linf_qcqp_1} can be simplified as 
\begin{align*}
  &\lambda \sum_{\set{i,j} \in E} (\abs{\gest_i - \gest_j}^2 - \abs{\gtil_i - \gtil_j}^2)  \\
  &=   \lambda \sum_{\set{i,j} \in E: p \not\in \set{i,j}} (\abs{\gest_i - \gest_j}^2 - \abs{\gtil_i - \gtil_j}^2)  
  + \lambda \sum_{j: \set{p,j} \in E} (\abs{\gest_p - \gest_j}^2 - \abs{\gtil_p - \gtil_j}^2) \\
  &= \lambda \sum_{\set{i,j} \in E: p \not\in \set{i,j}} (\abs{\gest_i - \gest_j}^2 - \abs{\gest_i - \gest_j}^2)  
  + \lambda \sum_{j: \set{p,j} \in E} (\abs{\gest_p - \gest_j}^2 - \abs{\gtil_p - \gtil_j}^2) \\
  &= 2 \lambda \sum_{j: \set{p,j} \in E} \real((h_p - \gest_p)^* \gest_j)
\end{align*}
where the second equality uses the definition of $\gtil$, and the final equality follows from simple algebra. Since the RHS of \eqref{eq:linf_qcqp_1} equals $2\real((\gest_p - h_p)^* z_p)$, hence \eqref{eq:linf_qcqp_1} is equivalent to
\begin{equation} \label{eq:linf_qcqp_2}
 \lambda \sum_{j: \set{p,j} \in E} \real((h_p - \gest_p)^* \gest_j) \leq  \real((\gest_p - h_p)^* z_p).
\end{equation}
Not let us denote $\varepsilon \in [-1,1]$ to be the largest number such that $\real(\gest_i^{*} h_i) \geq \varepsilon$ holds for all $i=1,\dots,n$. Since $\real(\gest_i^{*} h_i) = 1 - \frac{\abs{\gest_i - h_i}^2}{2}$, hence $\abs{\gest_i - h_i}^2 \leq 2(1-\varepsilon)$ holds for each $i$. Our goal is to now provide simplified lower and upper bounds on the LHS and RHS of \eqref{eq:linf_qcqp_2} respectively. In order to obtain the lower bound, we note that 
\begin{align}
 & \ \lambda \sum_{j: \set{p,j} \in E} \real(h_p^* \gest_j) - \lambda \sum_{j: \set{p,j} \in E} \real(\gest_p^* \gest_j) \nonumber \\
 &=  \lambda \sum_{j: \set{p,j} \in E} \real(h_p^* h_j) +  \lambda \sum_{j: \set{p,j} \in E} \real(h_p^* (\gest_j - h_j)) -  \lambda \sum_{j: \set{p,j} \in E} \real(\gest_p^* \gest_j) \nonumber \\
 &\geq  \lambda \sum_{j: \set{p,j} \in E} \real(h_p^* h_j) - \lambda d_p - \lambda d_p \sqrt{2(1-\varepsilon)} \label{eq:linf_qcqp_3}
\end{align}
where the last inequality follows from $\real(\gest_p^* \gest_j) \leq 1$ and $\real(h_p^* (\gest_j - h_j)) \geq - \abs{\gest_j - h_j} \geq -\sqrt{2(1-\varepsilon)}$. Plugging the bound $\real(h_p^* h_j) = 1 - \frac{\abs{h_p - h_j}^2}{2} \geq 1 - \frac{\smooth_n^2}{2}$ in \eqref{eq:linf_qcqp_3}, we obtain the lower bound 
\begin{align}
\lambda d_p \left(1 - \frac{\smooth_n^2}{2} \right) - \lambda d_p - \lambda d_p \sqrt{2(1-\varepsilon)} 
= - \lambda d_p \left(\frac{\smooth_n^2}{2} + \sqrt{2(1-\varepsilon)}\right).  \label{eq:linf_qcqp_4}
\end{align}
The upper bound on the RHS of \eqref{eq:linf_qcqp_2} follows readily as shown below. 
\begin{align}
\real((\gest_p - h_p)^* z_p) 
&= \real(\gest_p^* (z_p - h_p)) + \real(\gest_p^* h_p) - \real(h_p^* z_p) \nonumber \\
&= \underbrace{\real(\gest_p^* (z_p - h_p))}_{\leq \abs{z_p - h_p} \leq \delta} + \real(\gest_p^* h_p) - \left( 1 - \underbrace{\frac{\abs{h_p - z_p}^2}{2}}_{\leq \delta^2/2} \right) \nonumber \\
&\leq \real(\gest_p^* h_p) - 1  + \delta + \frac{\delta^2}{2}. \label{eq:linf_qcqp_5}
\end{align}
Applying \eqref{eq:linf_qcqp_4}, \eqref{eq:linf_qcqp_5} in \eqref{eq:linf_qcqp_2}, we obtain
\begin{align}
  \real(\gest_p^* h_p) 
  &\geq 1  - \delta - \frac{\delta^2}{2} - \lambda d_p \left(\frac{\smooth_n^2}{2} + \sqrt{2(1-\varepsilon)} \right)  \nonumber \\
  &\geq 1  - \delta - \frac{\delta^2}{2} - \lambda \triangle \left(\frac{\smooth_n^2}{2} + \sqrt{2}(1-\frac{\varepsilon}{2}) \right) \quad \text{(Since $d_p \leq \triangle$ and $\sqrt{1-\varepsilon} \leq 1-\frac{\varepsilon}{2}$)}. \label{eq:linf_qcqp_6}
\end{align}
Since $\varepsilon$ is the largest lower bound holding uniformly for $\real(\gest_p^* h_p)$ for each $p=1,\dots,n$, hence $\varepsilon$ must be larger than the RHS of \eqref{eq:linf_qcqp_6}. If furthermore $\lambda \triangle < \sqrt{2}$, we obtain
\begin{align*}
  \varepsilon
  &\geq  1  - \delta - \frac{\delta^2}{2} - \lambda \triangle \left(\frac{\smooth_n^2}{2} + \sqrt{2}(1-\frac{\varepsilon}{2}) \right) \\
  \Leftrightarrow  \varepsilon  &\geq \frac{1  - \delta - \frac{\delta^2}{2} - \lambda \triangle \left(\frac{\smooth_n^2}{2} + \sqrt{2}\right)}{1 - \frac{\lambda \triangle}{\sqrt{2}}}
\end{align*}
and the stated bound on $\norm{\gest - h}_{\infty}^2$ follows since $\abs{\gest_i - h_i}^2 \leq 2(1-\varepsilon)$ holds for each $i=1,\dots,n$.
\end{proof}
\begin{remark} \label{rem:qcqp_linf_bd}
The above result is a stability result for the  solution $\gest$ of \eqref{prog:qcqp} and states that if $\lambda \lesssim 1/\triangle$ then $\norm{\gest-h}_{\infty} \lesssim \sqrt{\delta} + \sqrt{\lambda \triangle}$. The bound is admittedly not satisfactory, since, as seen from Theorem~\ref{thm:main_linf_qcqp}, it does not really satisfy the ``denoising property'' $\norm{\gest - h}_{\infty} < \norm{z-h}_{\infty}$ which is what one would ideally like to prove (since \eqref{prog:qcqp} is denoising $z$). The problem lies in the proof technique where we do not really make use of any particular property of $\gest$, but rather, use a feasibility argument with a suitably constructed feasible point $\gtil$. In the next section, we will see certain optimality conditions necessarily satisfied by $\gest$, however it is unclear how they can be employed to yield better bounds. 
\end{remark}

\subsection{Tightness of the SDP relaxation of \eqref{prog:qcqp}} \label{sec:sdp_tightness}
We will now derive conditions under which \eqref{prog:sdp_relax} is a tight relaxation of \eqref{prog:qcqp}, i.e., the solution of \eqref{prog:sdp_relax} is a rank-$1$ matrix. The main result of this section is stated as the following Theorem.
\begin{theorem} \label{thm:main_sdp_relax_tight}
Assume that the observation $z \in \bbT_n$ satisfies $\norm{z-h}_{\infty} \leq \delta$. If $\lambda,\triangle,\delta$ satisfy 
\begin{enumerate}
    \item $\delta + \sqrt{\frac{8}{7}(3\delta +  \lambda\triangle (\smooth_n^2 + \sqrt{2}))} \leq \frac{\sqrt{2}}{3}$, and 
    
    \item $\lambda \triangle \leq \frac{1}{8}$,
\end{enumerate}
then \eqref{prog:sdp_relax} has a unique solution $X \in \mathbb{C}^{(n+1) \times (n+1)}$ where 
\begin{align*}
X = \begin{pmatrix}
  \gest \gest^{*} \quad & \gest \\ 
  \gest^{*} \quad & 1  
\end{pmatrix} 
=
\begin{pmatrix} \gest \\ 1 \end{pmatrix}
\begin{pmatrix} \gest^* & 1 \end{pmatrix}; \quad \gest \in \bbT_n.
\end{align*}
Consequently, $\gest$ is the unique solution of \eqref{prog:qcqp}.
\end{theorem}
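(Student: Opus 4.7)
The strategy is to exhibit a rank-one primal-dual optimal pair for the SDP satisfying the KKT conditions, and then argue uniqueness via strict complementary slackness. Let $\gest \in \bbT_n$ be any global minimizer of \eqref{prog:qcqp} (such a minimizer exists by compactness of $\bbT_n$). I take as primal candidate $W^\star = \begin{pmatrix}\gest \\ 1\end{pmatrix}\begin{pmatrix}\gest^* & 1\end{pmatrix}$. Slater's condition holds for \eqref{prog:sdp_relax} ($W=I$ is strictly feasible), so strong duality applies. The KKT condition $S\begin{pmatrix}\gest\\ 1\end{pmatrix} = 0$ with $S := T - \diag(\mu)$ forces the dual multipliers to be $\nu_i := \bar{\gest_i}\bigl((\lambda L\gest)_i - z_i\bigr)$ for $i=1,\dots,n$ and $\mu_0 := -\gest^* z$. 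Both are in fact real: the former by first-order stationarity of $\gest$ on $\bbT_n$ (which forces $\imag(\bar{\gest_i}((\lambda L\gest)_i - z_i)) = 0$), and the latter because $\gest^* z = \lambda \gest^* L \gest - \sum_i \nu_i$ is a sum of real quantities.

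The key structural observation I will exploit is the factorization
\begin{equation*}
S \;=\; \begin{pmatrix} I \\ -\gest^* \end{pmatrix} M \begin{pmatrix} I & -\gest \end{pmatrix}, \qquad M := \lambda L - \diag(\nu),
\end{equation*}
which follows by direct block expansion using $M\gest = z$ (immediate from the definition of $\nu$) and $\gest^* M \gest = \gest^* z = -\mu_0$. Because the outer factor has full column rank $n$, this reduces positive semidefiniteness and the rank of $S$ to those of $M$; if $M \succ 0$ then $\ker S$ is exactly one-dimensional and spanned by $\begin{pmatrix}\gest \\ 1\end{pmatrix}$.

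It therefore remains to show $M \succ 0$, for which I plan to use strict diagonal dominance. A short computation using $|\gest_i| = |z_i| = 1$ gives
\begin{equation*}
M_{ii} - \sum_{j \neq i} |M_{ij}| \;=\; 1 - \frac{\lambda}{2} \sum_{j \sim i} |\gest_i - \gest_j|^2 - \frac{1}{2} |\gest_i - z_i|^2.
\end{equation*}
Condition~(2) yields $2\lambda \triangle \leq 1/4$, bounding the middle term by $1/4$ via the crude estimate $|\gest_i - \gest_j|^2 \leq 4$. Condition~(2) also gives $1 - \lambda\triangle/\sqrt 2 \geq 7/8$, so Theorem~\ref{thm:main_linf_qcqp} provides $\|\gest - h\|_\infty \leq \sqrt{\tfrac{8}{7}(3\delta + \lambda\triangle(\smooth_n^2 + \sqrt 2))}$ after using $2\delta + \delta^2 \leq 3\delta$; condition~(1) then delivers $|\gest_i - z_i| \leq \|\gest - h\|_\infty + \delta \leq \sqrt 2/3$, so $\tfrac{1}{2}|\gest_i - z_i|^2 \leq 1/9$. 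Combining, each Gershgorin disc is bounded below by $1 - 1/4 - 1/9 = 23/36 > 0$, so $M$ is strictly diagonally dominant and $M \succ 0$.

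The main obstacle I foresee is numerical bookkeeping rather than any conceptual difficulty: condition~(1) is essentially dictated by the $\ell_\infty$ bound of Theorem~\ref{thm:main_linf_qcqp} (which itself is somewhat loose, as pointed out in Remark~\ref{rem:qcqp_linf_bd}), and condition~(2) must simultaneously (i) make that bound usable and (ii) absorb the trivial bound $|\gest_i - \gest_j|^2 \leq 4$ in the diagonal-dominance estimate. Once $M \succ 0$ is established, uniqueness is routine: $S$ has rank $n$ and $\ker S = \mathrm{span}\begin{pmatrix}\gest \\ 1\end{pmatrix}$, so any other SDP-optimal $W'$ satisfies $SW' = 0$ by complementary slackness and hence has range contained in that one-dimensional kernel, forcing $W' = W^\star$. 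Finally, any global minimizer $g'$ of \eqref{prog:qcqp} would produce the rank-one SDP solution $\begin{pmatrix}g' \\ 1\end{pmatrix}\begin{pmatrix}g'^* & 1\end{pmatrix}$, so uniqueness of the SDP solution forces $g' = \gest$, establishing uniqueness for \eqref{prog:qcqp} as well.
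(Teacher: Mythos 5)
Your proof is correct and reaches the same sufficient conditions as the paper, but via a genuinely different route to positive semidefiniteness of the dual certificate. The paper applies Sylvester's law of inertia to the block structure of $\est{S}$, reducing the problem to showing that the Schur complement $\lambda L + D - zz^*/(z^*\gest)$ is PSD of rank $n-1$ (verified through a quadratic-form bound $u^*M'u>0$ on $u\perp\gest$), together with a separate check that $z^*\gest>0$ obtained from the second-order optimality conditions. You instead exhibit the factorization $\est{S}=A^*MA$ with $A=\begin{pmatrix}I&-\gest\end{pmatrix}$ and $M=\lambda L-\diag(\nu)$ (which coincides with the paper's $\lambda L+D$); the identity needs only $M\gest=z$ and $M=M^*$, and then $\ker\est{S}=\ker A$ whenever $M\succ 0$, which you prove by strict diagonal dominance. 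This sidesteps the Schur complement, makes $z^*\gest>0$ automatic ($z^*\gest=\gest^*M\gest$), and your Gershgorin identity $M_{ii}-\sum_{j\neq i}|M_{ij}| = 1-\tfrac{\lambda}{2}\sum_{j\sim i}|\gest_i-\gest_j|^2-\tfrac12|\gest_i-z_i|^2$ is both simpler and, absent the extra $zz^*/(z^*\gest)$ contribution, gives a slightly larger margin ($23/36$ against the paper's $\geq 1/4$). Since you feed in the same coarse estimate $|\gest_i-\gest_j|^2\leq 4$ and the same $\ell_\infty$ bound from Theorem~\ref{thm:main_linf_qcqp}, the resulting conditions~(1) and~(2) coincide with the paper's. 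The numerics ($8/7$ from $\lambda\triangle\leq 1/8$, $\sqrt{2}/3$ from condition~(1), $1-1/4-1/9=23/36>0$) all check out, and the closing uniqueness argument is the content of Lemma~\ref{lemma:kkt_sdp}.
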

Our technique will essentially follow the idea proposed by Bandeira \textit{et al.} \cite{Bandeira2017} in the context of the tightness of the SDP relaxation of the MLE for the phase synchronization problem, and is detailed in the ensuing sections. The first step of the proof is to identify the KKT conditions which are satisfied by any global minimizer $X$ of \eqref{prog:sdp_relax} and its dual (see Lemma \ref{lemma:kkt_sdp}). This necessitates constructing a dual feasible matrix $\est{S} \in \mathbb{C}^{(n+1) \times (n+1)}$ satisfying $\est{S} X = 0$. The second step involves identifying the first order optimality conditions of \eqref{prog:qcqp} which are necessarily satisfied by any (local) minimizer of \eqref{prog:qcqp} (and hence by a global minimizer $\gest$ as well). This enables us to guess the form of $\est{S}$, which itself depends on $\gest$, and satisfies all except one KKT condition, namely positive semi-definiteness. In the third step, we show in Lemma \ref{lem:dual_cert_rank_n_psd} that if $\delta$ (noise level) and $\triangle \lambda$ are respectively sufficiently small, then $\rank(\est{S}) = n$ and $\est{S} \succeq 0$. This implies (from Lemma \ref{lemma:kkt_sdp}) that the solution $X$ of \eqref{prog:sdp_relax} is unique and has rank $1$.     
%
\subsubsection{KKT conditions for \eqref{prog:sdp_relax} and its dual} \label{subsec:kkt_conditions}
To begin with, we will need the following Lemma from \cite{Bandeira2017} (adapted to our setup) which states the KKT conditions for \eqref{prog:sdp_relax} and its dual.
\begin{lemma}[{\cite[Lemma 4.3]{Bandeira2017}}] \label{lemma:kkt_sdp}
A Hermitian matrix $X \in \mathbb{C}^{(n+1) \times (n+1)}$ is a global minimizer of \eqref{prog:sdp_relax} iff there exists a Hermitian matrix $\est{S} \in \mathbb{C}^{(n+1) \times (n+1)}$ such that 
\begin{enumerate}
    \item $X_{ii} = 1$ for all $i=1,\dots,n+1$; \label{kkt_cond_prim_a}
    \item $X \succeq 0$; \label{kkt_cond_prim_b}
    \item $\est{S} X = 0$; \label{kkt_cond_com_slack}
    \item $\est{S} - T$ is (real) diagonal; \label{kkt_cond_dual_feas_a}
    \item $\est{S} \succeq 0$. \label{kkt_cond_dual_feas_b}
\end{enumerate}
If furthermore $\rank(\est{S}) = n$, then $X$ has rank one and is the unique global minimizer of \eqref{prog:sdp_relax}.
\end{lemma}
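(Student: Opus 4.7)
The plan is to derive the stated KKT characterization as a standard strong-duality result for the SDP, and then extract the rank/uniqueness consequence from the complementary slackness relation $\est{S} X = 0$.

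First I would form the Lagrangian of \eqref{prog:sdp_relax} by introducing a Hermitian p.s.d.\ multiplier $S$ for the constraint $W \succeq 0$ (viewed as $-W \preceq 0$) and real multipliers $y_1,\dots,y_{n+1}$ for the equality constraints $W_{ii} = 1$. Writing
\begin{equation*}
\mathcal{L}(W,y,S) \;=\; \Tr(TW) - \Tr(SW) - \sum_i y_i(W_{ii}-1) \;=\; \Tr\bigl((T - S - \diag(y))W\bigr) + \sum_i y_i,
\end{equation*}
the inner minimization over Hermitian $W$ is bounded iff $S = T - \diag(y)$, which yields the dual problem $\max_{y\in\mathbb{R}^{n+1}} \sum_i y_i$ subject to $T - \diag(y) \succeq 0$. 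Slater's condition is trivially satisfied by the primal (take $W = I$, which is strictly feasible), so strong duality holds and a primal-dual optimal pair exists.

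Next I would verify the equivalence with the five KKT conditions in the lemma. Given any primal-dual optimal pair $(X,y)$, set $\est{S} := T - \diag(y)$. Conditions \ref{kkt_cond_prim_a} and \ref{kkt_cond_prim_b} are primal feasibility of $X$, condition \ref{kkt_cond_dual_feas_a} is simply the definition of $\est{S}$ (so $\est{S}-T = -\diag(y)$ is real diagonal), and \ref{kkt_cond_dual_feas_b} is dual feasibility. For \ref{kkt_cond_com_slack}, strong duality gives $\Tr(TX) = \sum_i y_i$; combined with $\Tr(TX) = \Tr(\est{S}X) + \sum_i y_i X_{ii} = \Tr(\est{S}X) + \sum_i y_i$, we get $\Tr(\est{S}X)=0$, and since both matrices are p.s.d.\ this forces $\est{S}X = 0$. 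Conversely, if some $\est{S}$ satisfies the five conditions, define $y_i := T_{ii} - \est{S}_{ii}$ so that $\est{S} = T - \diag(y)$. Then for any primal-feasible $X'$,
\begin{equation*}
\Tr(TX') - \Tr(TX) \;=\; \Tr(\est{S}X') + \sum_i y_i X'_{ii} - \Tr(\est{S}X) - \sum_i y_i X_{ii} \;=\; \Tr(\est{S}X') \;\geq\; 0,
\end{equation*}
using $\est{S}X=0$ and $X'_{ii}=X_{ii}=1$, so $X$ is a global minimizer.

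For the rank/uniqueness statement, I would argue as follows. From $\est{S}X = 0$ and $\rank(\est{S}) = n$, every column of $X$ lies in the one-dimensional null space of $\est{S}$, so $\rank(X)\leq 1$; combined with $X_{ii}=1\neq 0$, this gives $\rank(X)=1$. For uniqueness, let $X'$ be any other global minimizer. The computation above yields $\Tr(\est{S}X') = 0$, and since $\est{S},X'\succeq 0$ this implies $\est{S}X'=0$, so $X' = xx^*$ for some $x$ in the one-dimensional null space $\ker\est{S} = \mathbb{C}\cdot v$. Writing $x = \alpha v$ and imposing $|x_i|^2 = 1$ forces $|v_i|$ to be constant across $i$ and fixes $|\alpha|$, so $xx^*$ is determined uniquely, giving $X' = X$.

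The only non-routine part is ensuring strong duality rigorously; this is the main place where one must invoke Slater's condition and the Hermitian SDP duality theory, but $W = I$ provides strict feasibility so this step is standard. The rest is bookkeeping with complementary slackness and a short null-space argument.
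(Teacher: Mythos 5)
Your proof is correct and follows the standard SDP Lagrangian/KKT argument, which is the same route as \cite[Lemma 4.3]{Bandeira2017}; the paper itself does not reprove the lemma but simply cites that reference. Your derivation is complete: the Lagrangian and resulting dual $\max\sum_i y_i$ s.t.\ $T-\diag(y)\succeq 0$ are correct, $W=I$ verifies Slater's condition so strong duality holds and the dual optimum is attained (the primal optimum is attained as well since the feasible set $\{W\succeq 0,\,W_{ii}=1\}$ is compact), the deduction $\Tr(\est S X)=0\Rightarrow\est S X=0$ for p.s.d.\ Hermitian matrices is valid, and the null-space/unit-diagonal argument for rank one and uniqueness is sound. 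One minor polish point: you should state explicitly (as you implicitly use) that $\Tr(AB)=0$ with $A,B\succeq 0$ forces $AB=0$, and that the dual optimum is attained as a consequence of primal Slater strict feasibility; otherwise the proof is airtight.
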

Conditions \eqref{kkt_cond_prim_a}, \eqref{kkt_cond_prim_b} (resp. \eqref{kkt_cond_dual_feas_a}, \eqref{kkt_cond_dual_feas_b}) are primal (resp. dual) feasibility conditions, while condition \eqref{kkt_cond_com_slack} is the complementary slackness condition.
\subsubsection{Constructing the dual certificate $\est{S}$}
We will now derive the first order optimality conditions that are necessarily satisfied by any (local) minimizer of \eqref{prog:qcqp}. Since every $u = u_1 + \iota u_2 \in \mathbb{C}$ is uniquely identified by $(u_1, u_2) \in \mathbb{R}^2$ we can endow $\mathbb{C}$ with the Euclidean metric $\dotprod{u}{v} = \real(u^* v)$. Moreover, $\bbT_1$ is a submanifold of $\mathbb{C}$ with the tangent space at each $u \in \bbT_1$ given by
\begin{align*}
    T_u \bbT_1 = \set{\util \in \mathbb{C}: \dotprod{\util}{u} = 0}.
\end{align*}
Thus $\bbT_1$ (resp. $\bbT_n$) is a smooth Riemannian submanifold of $\mathbb{C}$ (resp. $\mathbb{C}^n$). For $x \in \bbT_n$, the tangent space of $\bbT_n$ at $x$ is given by 
\begin{align*}
    T_x \bbT_n = \set{\xtil \in \mathbb{C}^n: \real(\diag(\xtil x^*)) = 0}.
\end{align*}
Denoting $F(g) = \lambda g^* L g - 2\real(g^* z)$, we would like to minimize $F$ over $\bbT_n$. Following \cite{Bandeira2017}, let us define the orthogonal projection operator $\proj_x: \mathbb{C}^n \rightarrow T_x \bbT_n$ as 
\begin{equation}
    \proj_x(\xtil) = \xtil - \real(\diag(\xtil x^*)) x.\label{eq:proj}
\end{equation}
Denoting $\grad F(g) := \proj_g \nabla F(g)$ to be the Riemannian gradient of $F$ at $g \in \bbT_n$, where $\nabla F(g)$ is the usual Euclidean gradient of $F$ with respect to $(\real (g), \imag (g))^\top$. Then, for any local minimizer $g$ of \eqref{prog:qcqp}, it holds that 
\begin{align*}
    \grad F(g) = \proj_g \nabla F(g) = 0,
\end{align*}
which is a necessary first order optimality condition for \eqref{prog:qcqp}. One can easily verify that $\nabla F(g) = 2(\lambda L g - z)$, and hence the global minimizer $\gest$ of $\eqref{prog:qcqp}$ satisfies $\proj_{\gest} (\lambda L \gest - z) = 0$, or equivalently 
\begin{align} \label{eq:first_order_1}
 \left(\begin{pmatrix}
  \lambda L \quad & -z \\ 
  -z^* \quad & 0  
 \end{pmatrix}  
 - \real \left( \diag\left(
 \begin{pmatrix}
  \lambda L \quad & -z \\ 
  -z^* \quad & 0  
 \end{pmatrix}
 \begin{pmatrix}
 \gest \\  1  
 \end{pmatrix}
 \begin{pmatrix}
  \gest^* \quad & 1 
 \end{pmatrix}
 \right)
 \right)
 \right)
  \begin{pmatrix}
  \gest \\  1  
 \end{pmatrix} = 0.
\end{align}
Recall the definition of the matrix $T$ in \eqref{eq:Tdef}. Then denoting 
\begin{align} \label{eq:dual_cert_def}
\gesttil =  \begin{pmatrix} \gest \\  1 
\end{pmatrix} \text{ and } 
\quad \est{S} = T - \real(\diag(T \gesttil \gesttil^*)),
\end{align}
\eqref{eq:first_order_1} can be written as $\est{S} \gesttil = 0$. Now denoting $X = \gesttil \gesttil^* \in \mathbb{C}^{(n+1) \times (n+1)}$, clearly 
\begin{itemize}
    \item $X$ is primal feasible for \eqref{prog:sdp_relax} (conditions \eqref{kkt_cond_prim_a}, \eqref{kkt_cond_prim_b} of Lemma \ref{lemma:kkt_sdp});
    
    \item $\est{S} X = 0$  (condition \eqref{kkt_cond_com_slack} of Lemma \ref{lemma:kkt_sdp}), and 
    
    \item  $\est{S} - T$ is a real, diagonal matrix (condition \eqref{kkt_cond_dual_feas_a} of Lemma \ref{lemma:kkt_sdp}).
\end{itemize}
Hence setting $\est{S}$ as in \eqref{eq:dual_cert_def} to be our dual certificate candidate, we now only need to find conditions under which $\est{S} \succeq 0$ and is of rank $n$ since from Lemma \ref{lemma:kkt_sdp} this would imply $X$ is the unique solution of \eqref{prog:sdp_relax}. Consequently, $\gest$ will be the unique solution of $\eqref{prog:qcqp}$. 
%
%
\begin{lemma}[Properties of critical points]\label{lemma:Opt}
Let $L = \diag(W \bm{1}) - W$ where $W$ is the adjacency matrix of the graph.
If $\gest$ is a first order critical point of (QCQP) then, the following statements hold.
\begin{enumerate}
    \item[(i)] $z^* \gest$ is real, and
    \item[(ii)] $\ghat^*_i  ( z +  \lambda W\ghat)_i$ is real, for all $1\leq i\leq n$.
\end{enumerate}
Furthermore, let the real symmetric  matrix $W_{\ghat} =  W \circ \real(\ghat\ghat^*)$. Then, if $\gest$ is a second order critical point of (QCQP), we have
\begin{enumerate}
    \item[(iii)] $u^\top \left\{\real\diag(z\ghat^*) + \lambda \left[\diag(W_{\ghat} \bm{1}) - W_{\ghat} \right]\right\} u \geq 0$, for all $u\in \mathbb{R}^n$.
\end{enumerate}
\end{lemma}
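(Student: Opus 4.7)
The plan is to read off parts (i) and (ii) directly from the first-order critical point condition already derived in \eqref{eq:first_order_1}, and to obtain (iii) by computing the Euclidean Hessian of $F$ restricted to the smooth curve $\theta\mapsto e^{\rmi\theta}\circ\ghat$ on $\bbT_n$ and invoking the second-order optimality condition.

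For (i) and (ii), I would rewrite \eqref{eq:first_order_1} in the compact form $\lambda L\ghat - z = D\ghat$, where $D = \real\diag\bigl((\lambda L\ghat - z)\ghat^*\bigr)$ is a real diagonal matrix. Reading off coordinate $k$ and multiplying by $\bar\ghat_k$ (using $|\ghat_k| = 1$) yields $\bar\ghat_k(\lambda L\ghat - z)_k = D_{kk} \in \matR$. Substituting $L = \diag(W\bm{1}) - W$ gives
\begin{equation*}
\bar\ghat_k(\lambda L\ghat - z)_k = \lambda d_k - \bar\ghat_k(z + \lambda W\ghat)_k,
\end{equation*}
so since $\lambda d_k$ is real, $\bar\ghat_k(z + \lambda W\ghat)_k$ must also be real, which is (ii). Summing the $n$ scalar identities over $k$ produces $\lambda\ghat^*L\ghat - \ghat^*z \in \matR$; the first term is automatically real because $L$ is Hermitian, so $\ghat^*z \in \matR$, and consequently $z^*\ghat = \overline{\ghat^*z}$ is real too, giving (i).

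For (iii), I would parameterize $\bbT_n$ locally around $\ghat$ by $g(\theta) = e^{\rmi\theta}\circ\ghat$ with $\theta\in\matR^n$, so that the coordinate vectors $\partial_{\theta_k}g(0) = \rmi\ghat_k e_k$ span $T_{\ghat}\bbT_n$. At a critical point, the Euclidean Hessian $H$ of $\theta \mapsto F(g(\theta))$ at $\theta = 0$ coincides with the quadratic form of the Riemannian Hessian of $F$ on $\bbT_n$, so the second-order optimality condition becomes $u^\top H u \geq 0$ for all $u\in\matR^n$. A routine second-order Taylor expansion of $g^*Lg = \sum_{ij}L_{ij}e^{\rmi(\theta_j-\theta_i)}\bar\ghat_i\ghat_j$ will yield off-diagonal entries $H_{jk} = 2\lambda L_{jk}\real(\bar\ghat_j\ghat_k) = -2\lambda(W_{\ghat})_{jk}$ for $j\neq k$, together with a diagonal contribution $2\lambda\sum_{\ell\neq k}W_{k\ell}\real(\bar\ghat_k\ghat_\ell) = 2\lambda(W_{\ghat}\bm{1})_k$. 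Differentiating $-2\real(g^*z) = -2\sum_i[\cos\theta_i\,\real(\bar\ghat_iz_i)+\sin\theta_i\,\imag(\bar\ghat_iz_i)]$ twice in $\theta_k$ will add $+2\real(\bar\ghat_kz_k) = 2(\real\diag(z\ghat^*))_{kk}$ on the diagonal only, whence
\begin{equation*}
\tfrac{1}{2}H = \real\diag(z\ghat^*) + \lambda\bigl[\diag(W_{\ghat}\bm{1}) - W_{\ghat}\bigr],
\end{equation*}
which is exactly the matrix in (iii).

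The main obstacle is mild and is essentially just sign bookkeeping on the diagonal of $H$: one needs to verify that the quadratic Taylor coefficient of $\cos\theta_k\,\real(\bar\ghat_k z_k)$, combined with the overall $-2$ in front of $\real(g^*z)$, yields the correct sign $+\real\diag(z\ghat^*)$, and that $L = \diag(W\bm{1}) - W$ produces simultaneously $\diag(W_{\ghat}\bm{1})$ on the diagonal and $-W_{\ghat}$ off-diagonal. I would also briefly note why the Euclidean Hessian in these coordinates agrees with the Riemannian Hessian at the critical point $\ghat$: this is the standard fact that for any smooth curve $\gamma$ on an embedded submanifold with $\gamma(0) = \ghat$, $\tfrac{d^2}{dt^2}F(\gamma(t))|_{t=0} = \langle\Hess F(\ghat)[\gamma'(0)],\gamma'(0)\rangle$ whenever $\grad F(\ghat) = 0$.
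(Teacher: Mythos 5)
Your proof is correct. Parts (i) and (ii) follow essentially the same route as the paper: read off the first-order condition $\lambda L\ghat - z = D\ghat$ with $D$ real diagonal, then multiply by appropriate quantities (the paper multiplies by $\ghat^*$ and by $\ghat_i^*e_i$ respectively; you obtain (ii) first from the coordinate-wise relation and then sum over $k$ to get (i) — a cosmetic reordering of the same manipulation, using $L = \diag(W\bm 1) - W$ exactly as the paper does).

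For (iii) you take a genuinely different route. The paper derives an explicit formula for the Riemannian Hessian, $\Hess F(g)[\gdot] = 2\{\lambda L\gdot - \real[\diag((\lambda Lg - z)g^*)]\gdot\}$, as a separate proposition in the appendix (via the projected directional derivative of $\grad F$), then evaluates $\langle\gdot,\Hess F(\ghat)[\gdot]\rangle$ at $\gdot = \rmi\diag(u)\ghat$ and simplifies using (ii). You instead introduce the angular chart $g(\theta) = e^{\rmi\theta}\circ\ghat$, compute the ordinary Euclidean Hessian of the scalar function $\theta\mapsto F(g(\theta))$ at $\theta = 0$ by a second-order Taylor expansion, and invoke the standard fact that at a critical point this pullback Hessian agrees with the quadratic form of the Riemannian Hessian. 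Your sign and coefficient bookkeeping is right: the off-diagonals come out to $-2\lambda(W_\ghat)_{kl}$ from the $L_{kl}$ cross terms, the diagonal of the quadratic-form piece is $2\lambda(W_\ghat\bm 1)_k$, and the $-2\real(g^*z)$ term contributes $+2\real(\bar\ghat_kz_k)$ on the diagonal only, giving $\tfrac12 H = \real\diag(z\ghat^*) + \lambda[\diag(W_\ghat\bm 1) - W_\ghat]$. The two approaches are equivalent in content; yours is more elementary and self-contained (no need for the covariant-Hessian lemma), while the paper's is more systematic and gives a reusable intrinsic Hessian formula. You should verify explicitly, rather than merely asserting, that the coordinate vectors $\rmi\ghat_ke_k$ do span $T_\ghat\bbT_n$ and that the invoked identity $\tfrac{\partial^2}{\partial\theta_k\partial\theta_l}(F\circ g)(0) = \langle\Hess F(\ghat)[\rmi\ghat_ke_k],\rmi\ghat_le_l\rangle$ holds for the full bilinear form, not just along single curves, but this is standard for local parametrizations at a critical point.
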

\begin{proof}
The first order condition \eqref{eq:first_order_1} can be written as follows
\[
\lambda L \gest - \real(\diag(\lambda L\gest \gest^*))\gest +\real(\diag(z\gest^*))\gest  = z.
\]
Then, $(i)$  and $(ii)$ are obtained, respectively, by multiplying the above expression by $\gest^*$ and $\gest^{*}_i e_i$. Next, we rely on the second order condition (see Proposition \ref{prop:SecondOrder} in appendix)
\[
\langle \gdot , \Hess F(\ghat) [\gdot]\rangle \geq 0 \text{ for all } \gdot\in T_{\ghat}\bbT_n,
\]
with
$
\Hess F(g)[\gdot ] = 2\left\{ \lambda L \gdot -\real[\diag\left((\lambda Lg-z)g^{*}\right)] \gdot \right\}.
$
Then, we parametrize $\gdot\in T_{\ghat}\bbT_n$ as $\gdot = \rmi \diag(u) \ghat$ where $u\in \mathbb{R}^n$. Consequently, since $\diag(u) \ghat = \diag(\ghat) u$ and thanks to (ii),  we have the equivalent expression
\begin{align*}
\langle \gdot , \Hess F(\ghat) [\gdot]\rangle &= 2\real \left \{ u^\top \diag(\ghat^{*}) \left[ \real\diag(z \ghat^{*} + \lambda W\ghat \ghat^{*}) -\lambda W  \right] \diag(\ghat) u\right\}\\
&= 2\real \left \{ u^\top  \left[ \real\diag(z \ghat^{*} + \lambda W\ghat \ghat^{*}) -\lambda \diag(\ghat^{*})W\diag(\ghat)  \right]  u\right\}.
\end{align*}
Then, the condition becomes
\begin{align*}
\real \left \{ u^\top  \left[ \real\diag(z \ghat^{*}) + \real\diag(\lambda\diag(\ghat^{*})W\diag(\ghat)\bm{1}) -\lambda \diag(\ghat^{*})W\diag(\ghat)  \right]  u\right\}\geq 0
\end{align*}
for all $u\in \mathbb{R}^n$ and (iii) follows.
\end{proof}
\begin{corollary}
If $\ghat$ is a second order critical point of (QCQP), then it holds that  $z^{*} \gest \geq 0$ and $\ghat^{*}_i  ( z + \lambda W\ghat)_i\geq 0,$
for all $1\leq i\leq n$.
\label{coro:second_order}
\end{corollary}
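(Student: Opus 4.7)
The strategy is to exploit the second-order condition (iii) of Lemma \ref{lemma:Opt} by making two judicious choices of the test vector $u\in\mathbb{R}^n$, and then use (i) and (ii) of Lemma \ref{lemma:Opt} to conclude that the resulting real inequalities are exactly the claimed sign conditions. The key observation is that $L_{\ghat} := \diag(W_{\ghat}\bm{1}) - W_{\ghat}$ is a (real symmetric) weighted graph Laplacian, so that $L_{\ghat}\bm{1}=0$ and $e_i^\top L_{\ghat} e_i = \sum_{j:\{i,j\}\in E} W_{ij}\real(\ghat_i\ghat_j^*)$.

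For the first claim, I would plug $u = \bm{1}$ into (iii). The Laplacian term drops out, leaving $\sum_{i=1}^n \real(z_i\ghat_i^*) \geq 0$. Since $\real(z_i\ghat_i^*) = \real(\overline{z_i}\ghat_i)$, this sum is exactly $\real(z^*\ghat)$, which equals $z^*\ghat$ by (i). Hence $z^*\ghat \geq 0$.

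For the second claim, I would plug $u = e_i$ into (iii). This yields
\begin{equation*}
\real(z_i\ghat_i^*) + \lambda \sum_{j:\{i,j\}\in E} W_{ij}\real(\ghat_i\ghat_j^*) \geq 0.
\end{equation*}
The sum equals $\real(\ghat_i \overline{(W\ghat)_i}) = \real(\ghat_i^*(W\ghat)_i)$ (since $\real(a)=\real(\bar a)$), and $\real(z_i\ghat_i^*) = \real(\ghat_i^* z_i)$. Factoring out $\ghat_i^*$ gives $\real\bigl(\ghat_i^*(z+\lambda W\ghat)_i\bigr) \geq 0$, and by (ii) this real part equals the quantity itself, yielding $\ghat_i^*(z+\lambda W\ghat)_i \geq 0$.

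There is no real obstacle here since Lemma \ref{lemma:Opt} has already done the heavy lifting; the corollary is essentially a bookkeeping consequence of specializing (iii) to $u=\bm{1}$ and $u=e_i$ and then invoking (i), (ii) to upgrade the real parts to the full complex scalars.
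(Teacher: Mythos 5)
Your proof is correct and follows exactly the same route as the paper: take $u=\bm{1}$ in (iii) to get $\real(z^*\ghat)\geq 0$ (then upgrade via (i)), and take $u=e_i$ to get the nonnegativity of the $i$-th diagonal entry $\real(\ghat_i^*(z+\lambda W\ghat)_i)\geq 0$ (then upgrade via (ii)). The paper phrases the second step as "the positivity of the diagonal in (iii)," which is precisely your $u=e_i$ specialization spelled out.
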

\begin{proof}
The inequality $z^{*} \gest \geq 0$ follows from (iii) of Lemma \ref{lemma:Opt}, by taking $u = \bm{1}$. While the second inequality follows by using the positivity of the diagonal in (iii) and  the statement (ii).
\end{proof}
\begin{remark}
Although Lemma \ref{lemma:Opt} states that $z^{*} \gest$ is real whenever $\gest$ is a first order critical point, the quantity  $z^{*}_i \gest_i$, with $1\leq i\leq n$,  is not necessarily real. Similarly, if $\gest$ is a second order critical point, we have, thanks to Corollary \ref{coro:second_order}, that $\ghat^{*}_i  z_i + \lambda\ghat^{*}_i( W\ghat)_i\geq 0,$
for all $1\leq i\leq n$. However, again, the first term in the latter inequality is not necessarily real.
\end{remark}
\subsubsection{Establishing conditions under which $\rank(\est{S}) = n$ and $\est{S} \succeq 0$}
We will now establish conditions under which the dual certificate candidate $\est{S}$ as in \eqref{eq:dual_cert_def} is positive semidefinite and of rank $n$. The following Lemma states that if the noise level $\delta$, and the term $\lambda \triangle$ are respectively small, then $\rank(\est{S}) = n$ and $\est{S} \succeq 0$ implying that the solution of \eqref{prog:sdp_relax} is a unique rank-$1$ matrix (and hence, \eqref{prog:sdp_relax} is a tight relaxation of \eqref{prog:qcqp}).

\begin{lemma}[Sufficient condition for tightness of SDP] \label{lem:dual_cert_rank_n_psd}
Let $\gest \in \bbT_n$ be a global minimizer of \eqref{prog:qcqp}
$\min_{g \in \bbT_n} \lambda g^* L g - 2\real(g^* z),
$ and denote $\gesttil =  \begin{pmatrix} \gest \\  1 \end{pmatrix} $. Under the notation defined earlier, if we have
\begin{align} 
  \lambda \triangle \left( B_n +  2\|\ghat -h\|_\infty \right)+\frac{3-(\delta + \norm{\gest - h}_{\infty})^2}{2 - (\delta + \norm{\gest - h}_{\infty})^2}\Big(\delta + \norm{\gest - h}_{\infty}\Big)^2 < 1, \label{eq:SufficientSDPTight}
\end{align} then $\est{S} = T - \real(\diag(T \gesttil \gesttil^*))$ satisfies $\est{S} \succeq 0$, and $\rank(S) = n$ while the unique solution to \eqref{prog:sdp_relax} reads
\begin{align*}
X = \gesttil \gesttil^* = \begin{pmatrix}
  \gest \gest^{*} \quad & \gest \\ 
  \gest^{*} \quad & 1  
\end{pmatrix}.
\end{align*}
In particular, the condition \eqref{eq:SufficientSDPTight} is satisfied if %
\begin{enumerate}
    \item $\delta + \sqrt{\frac{8}{7}(3\delta +  \lambda\triangle (\smooth_n^2 + \sqrt{2}))} \leq \frac{\sqrt{2}}{3}$, and 
    
    \item $\lambda \triangle \leq \frac{1}{8}$.
\end{enumerate}

\end{lemma}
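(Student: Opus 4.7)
The plan is to invoke Lemma~\ref{lemma:kkt_sdp} with the dual certificate $\est{S}$ defined in~\eqref{eq:dual_cert_def}. The first-order analysis leading up to this lemma has already shown that $X := \gesttil\gesttil^*$ fulfils the primal conditions~\eqref{kkt_cond_prim_a}--\eqref{kkt_cond_prim_b}, the complementary slackness $\est{S}\gesttil = 0$ (condition~\eqref{kkt_cond_com_slack}), and the diagonal correction condition~\eqref{kkt_cond_dual_feas_a}. Since $\gesttil\in\ker\est{S}$ is already known, both $\est{S}\succeq 0$ and $\rank(\est{S})=n$ will follow once I prove $v^*\est{S}v>0$ for every nonzero $v\in\gesttil^\perp$. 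I therefore parameterize $v = (u^\top,\beta)^\top$ with $u\in\mathbb{C}^n$ and $\beta = -\gest^*u$, which is a bijection $\mathbb{C}^n \to \gesttil^\perp$, so $v\neq 0$ iff $u\neq 0$.

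Expanding $v^*\est{S}v = v^*Tv - v^*\real\diag(T\gesttil\gesttil^*)v$ block-wise, substituting $\beta = -\gest^*u$, and using the identity $z^*\gest\in\mathbb{R}$ from Lemma~\ref{lemma:Opt}(i), I obtain a natural split $v^*\est{S}v = (\mathrm{I}) + (\mathrm{II})$ with $(\mathrm{I}) := \lambda u^*Lu - \lambda\sum_i\real(\bar\gest_i(L\gest)_i)|u_i|^2$ and $(\mathrm{II}) := \sum_i \real(\bar\gest_i z_i)|u_i|^2 + 2\real((\gest^*u)(u^*z)) + (z^*\gest)|\gest^*u|^2$. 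Symmetrizing $(\mathrm{I})$ over edges yields
\begin{equation*}
(\mathrm{I}) = \lambda\sum_{\{i,j\}\in E}\Bigl[|u_i-u_j|^2 - \bigl(1-\real(\bar\gest_i\gest_j)\bigr)(|u_i|^2+|u_j|^2)\Bigr],
\end{equation*}
and the perturbation estimate $1-\real(\bar\gest_i\gest_j) \leq \tfrac{B_n^2}{2}+2\|\gest-h\|_\infty \leq B_n+2\|\gest-h\|_\infty$, obtained by writing $\bar\gest_i\gest_j - \bar h_i h_j = \bar\gest_i(\gest_j-h_j)+(\bar\gest_i-\bar h_i)h_j$ and using $B_n\leq 2$, gives $(\mathrm{I})\geq -\lambda\triangle(B_n+2\|\gest-h\|_\infty)\|u\|^2$. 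For $(\mathrm{II})$, I decompose $u^*z = u^*\gest + u^*(z-\gest)$, so that $2\real((\gest^*u)(u^*z)) = 2|\gest^*u|^2 + 2\real((\gest^*u)(u^*(z-\gest)))$, and neutralize the $|\gest^*u|^2$ contribution via AM--GM at the critical parameter $\eta = 2+z^*\gest$:
\begin{equation*}
2\real\bigl((\gest^*u)(u^*(z-\gest))\bigr) \geq -\eta|\gest^*u|^2 - \frac{|u^*(z-\gest)|^2}{\eta}.
\end{equation*}
Inserting the pointwise bound $\real(\bar\gest_i z_i)\geq 1-\epsilon_0^2/2$ with $\epsilon_0 := \delta+\|\gest-h\|_\infty$, the identity $\|z-\gest\|_2^2 = 2(n-z^*\gest)\leq n\epsilon_0^2$, and $z^*\gest\geq n(1-\epsilon_0^2/2)$ then gives $(\mathrm{II})\geq \|u\|^2\,[1 - (3-\epsilon_0^2)\epsilon_0^2/(2-\epsilon_0^2)]$.

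Summing the bounds on $(\mathrm{I})$ and $(\mathrm{II})$ reproduces the hypothesis~\eqref{eq:SufficientSDPTight}, which delivers $v^*\est{S}v>0$ on $\gesttil^\perp\setminus\{0\}$; Lemma~\ref{lemma:kkt_sdp} then gives uniqueness and tightness. For the simpler explicit conditions, I substitute the $\ell_\infty$ bound of Theorem~\ref{thm:main_linf_qcqp}: under $\lambda\triangle\leq 1/8$ one has $1-\lambda\triangle/\sqrt{2}\geq 7/8$, hence (using $\delta^2\leq\delta$) $\|\gest-h\|_\infty \leq \sqrt{(8/7)(3\delta+\lambda\triangle(B_n^2+\sqrt 2))}$; hypothesis~(1) then forces $\epsilon_0 \leq \sqrt{2}/3$, which yields $(3-\epsilon_0^2)\epsilon_0^2/(2-\epsilon_0^2) \leq 3/8$ and $B_n + 2\|\gest-h\|_\infty \leq 3$, so that $\lambda\triangle(B_n+2\|\gest-h\|_\infty) \leq 3/8$ and the two contributions sum to strictly less than $1$. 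The delicate step is the AM--GM balancing in $(\mathrm{II})$: the choice $\eta = 2+z^*\gest$ is essentially forced, since a smaller $\eta$ leaves an uncontrolled $|\gest^*u|^2$ deficit (which is arbitrary on $\gesttil^\perp$), whereas a larger $\eta$ inflates the $\|u\|^2$ penalty beyond what $(\mathrm{I})$ can absorb.
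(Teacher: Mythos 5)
Your proof takes a genuinely different technical route from the paper and is substantially correct, with one computational slip in the coefficient. The paper uses Sylvester's law of inertia (via Corollary~\ref{coro:second_order} guaranteeing $z^*\gest\geq 0$) to $*$-congruently reduce $\est{S}$ to the block diagonal $\diag(M, z^*\gest)$ with Schur complement $M = \lambda L + D - \frac{zz^*}{z^*\gest}$, and then bounds $u^*Mu$ for $u\perp\gest$. You instead parameterize $\gesttil^\perp\subset\mathbb{C}^{n+1}$ by $u\mapsto(u,-\gest^*u)$ with $u\in\mathbb{C}^n$ unrestricted, expand $v^*\est{S}v$ directly, and neutralize the $|\gest^*u|^2$ cross-term by completing the square at the critical value $\eta=2+z^*\gest$ (which is the largest $\eta$ for which the coefficient of $|\gest^*u|^2$ stays nonnegative, hence the optimal choice since the $|u^*(z-\gest)|^2/\eta$ penalty is decreasing in $\eta$). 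Your AM--GM is precisely the Schur-complement elimination in disguise, but applied on the constrained subspace $\gesttil^\perp$ rather than after the congruence; the two approaches use the same ingredients (first-order optimality, reality of $z^*\gest$ and of the diagonal corrections, the bounds~\eqref{eq:zi_gi_bd}--\eqref{eq:Lgest_bd}) and yield essentially the same final estimate. Your edge-symmetrization of $(\mathrm{I})$ is valid and equivalent to the paper's pointwise bound $\real(\gest_i^*(L\gest)_i)\leq\|Lh\|_\infty+\|L(\gest-h)\|_\infty\leq\triangle(B_n+2\|\gest-h\|_\infty)$, just slightly more verbose. Your verification of the ``in particular'' conditions via Theorem~\ref{thm:main_linf_qcqp} also matches the paper.

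The one issue: you claim $(\mathrm{II})\geq\|u\|^2\bigl[1-(3-\epsilon_0^2)\epsilon_0^2/(2-\epsilon_0^2)\bigr]$, but this does not follow from your ingredients. From $u^*D_1u\geq(1-\epsilon_0^2/2)\|u\|^2$ and $\frac{|u^*(z-\gest)|^2}{2+z^*\gest}\leq\frac{\|u\|^2\|z-\gest\|_2^2}{z^*\gest}\leq\frac{2\epsilon_0^2}{2-\epsilon_0^2}\|u\|^2$ you obtain
\begin{equation*}
(\mathrm{II}) \;\geq\; \|u\|^2\Bigl(1 - \tfrac{\epsilon_0^2}{2} - \tfrac{2\epsilon_0^2}{2-\epsilon_0^2}\Bigr) \;=\; \|u\|^2\Bigl(1 - \tfrac{(6-\epsilon_0^2)\epsilon_0^2}{2(2-\epsilon_0^2)}\Bigr),
\end{equation*}
and $\tfrac{6-\epsilon_0^2}{2(2-\epsilon_0^2)} > \tfrac{3-\epsilon_0^2}{2-\epsilon_0^2} = \tfrac{6-2\epsilon_0^2}{2(2-\epsilon_0^2)}$ whenever $\epsilon_0>0$; the extra ``$+2$'' in your denominator only closes this $O(\epsilon_0^4)$ gap for finite $n$, not uniformly. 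So your derivation (like the paper's own, see~\eqref{eq:quad_bd_2}, which leads to exactly the same $\tfrac{(6-\epsilon_0^2)\epsilon_0^2}{2(2-\epsilon_0^2)}$) actually establishes a slightly \emph{stricter} sufficient condition than the one stated in~\eqref{eq:SufficientSDPTight}. This appears to be a small inconsistency carried over from the paper's lemma statement rather than a flaw in your strategy, and the ``in particular'' conclusion under hypotheses (1)--(2) remains valid because there is ample slack (at $\epsilon_0^2=2/9$ the larger coefficient equals $13/36$, giving $13/36+3/8=53/72<1$). You should nonetheless not present $\eqref{eq:SufficientSDPTight}$ as something your bounds reproduce; state the condition you actually prove.
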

\begin{proof}
To begin with, note that the first order optimality condition \eqref{eq:first_order_1} states that
\begin{equation*}
    \real( (T \gesttil)_i \gesttil_i^* ) \gesttil_i = (T \gesttil)_i; \ i=1,\dots,n+1,
\end{equation*}
and hence $(T \gesttil)_i \gesttil_i^*$ is real for each $i$. Consequently, we have that $z^* \gest$ and $(\lambda L \gest - z)_i \gest_i^*$ for $i=1,\dots,n$ are real. Thus $\est{S} = T - \diag(T \gesttil \gesttil^*)$. Denoting $D \in \matR^{n \times n}$ to be a real diagonal matrix with $D = \diag(\gest^*\circ (z-\lambda L \gest )) $, one can verify that 
\begin{equation*}
    \est{S} = 
     \begin{pmatrix}
  \lambda L \quad & -z \\ 
  -z^* \quad & 0  
 \end{pmatrix} + 
  \begin{pmatrix}
  D \quad & 0 \\ 
  0 \quad & z^* \gest  
 \end{pmatrix}
=  \begin{pmatrix}
  \lambda L + D \quad & -z \\ 
  -z^* \quad & z^* \gest  
 \end{pmatrix}.
 \end{equation*}
 If $z^* \gest \neq 0$, then from Sylvester's law of inertia (see {\cite[Theorem 4.5.8]{Horn2012}}) we know that $\est{S}$ is \mbox{$^*$-congruent} to the Hermitian block diagonal matrix
 \begin{equation*}
  \est{S}_D =  \begin{pmatrix}
  \lambda L + D - \frac{zz^*}{z^* \gest} \quad & 0 \\ 
  0 \quad & z^* \gest  
 \end{pmatrix}   
 \end{equation*}
which is equivalent to saying that $\est{S}, \est{S}_D$ have the same \emph{inertia}. Since $\gest$ is a second order critical point of (QCQP), we have $z^* \gest \geq 0$ thanks to Corollary \ref{coro:second_order}. Suppose that $z^* \gest > 0$, while we establish below necessary conditions so that this is satisfied. Thus, it follows that $\est{S}$ is rank-$n$ and p.s.d iff the matrix
\begin{equation*}
    M = \lambda L + D - \frac{zz^*}{z^* \gest} \in \mathbb{C}^{n \times n}
\end{equation*}
is p.s.d and has rank $(n-1)$. Hence we will now focus on establishing conditions under which $M \succeq 0$ and $\rank(M) = n-1$. 

To this end, note that since $ \gest_i^*(\lambda L \gest - z)_i$ is real for each $i$, hence
\begin{equation*}
     \gest_i^*(\lambda L \gest - z)_i  = \real( \gest_i^*(\lambda L \gest - z)_i ) = \lambda \real( \gest_i^*( L \gest)_i) - \real( \gest_i^* z_i).
\end{equation*}
Denote $D_1, D_2 \in \mathbb{R}^{n \times n}$ to be diagonal matrices with $(D_1)_{ii} = \real( \gest_i^* z_i)$, and $(D_2)_{ii} = \lambda \real( \gest_i^* ( L \gest)_i)$ for $i=1,\dots,n$. Then we can write $M$ as 
\begin{equation*}
    M = \lambda L + D_1 - D_2  -  \frac{zz^*}{z^* \gest},
\end{equation*}
where we observe that $\gest$ is an eigenvector of $M$ with eigenvalue $0$. Let $u \in \mathbb{C}^n$ be orthogonal to $\gest$, i.e., $u^* \gest = 0$ and $u \neq 0$. We will now establish conditions under which $u^* M u > 0$ which in turn will imply $\rank(M) = n-1$, and $M \succeq 0$. Since $u^* L u \geq 0$ and $u^* \gest = 0$, we arrive at the bound
\begin{align} 
u^* M u 
&\geq u^*D_1 u - u^* D_2 u - \frac{\abs{u^* z}^2}{z^* \gest} \nonumber \\     
&= \sum_{i=1}^n \abs{u_i}^2 \real(\gest_i^*z_i) - \lambda \sum_{i=1}^n \abs{u_i}^2 \real(\gest_i^* (L \gest)_i ) - \frac{\abs{u^* (z-\gest)}^2}{z^* \gest} \nonumber  \\ 
&\geq \sum_{i=1}^n \abs{u_i}^2 \real(\gest_i^* z_i) - \lambda \sum_{i=1}^n \abs{u_i}^2 \real(\gest_i^* (L \gest)_i ) - \frac{\norm{u}_2^2 \norm{z-\gest}_2^2}{z^* \gest}.
\label{eq:quad_bd_1}
\end{align}
Now note that 
\begin{align} \label{eq:zi_gi_bd}
 \real(\gest_i^* z_i) = 1 - \frac{\abs{z_i - \gest_i}^2}{2}  \geq 1 - \frac{(\delta + \norm{\gest - h}_{\infty})^2}{2}; \quad i=1,\dots,n,
\end{align}
where we used the triangle inequality $\abs{z_i - \gest_i} \leq \abs{z_i - h_i} + \abs{h_i - \gest_i}$. Hence $ \real(z_i \gest_i^*) > 0$ for each $i$ if $\frac{(\delta + \norm{\gest - h}_{\infty})^2}{2}  < 1$. Consequently, we have the bounds 
\begin{align}
    z^* \gest &= \real(z^* \gest) = \sum_{i=1}^n \real(z_i^* \gest_i) \geq n\left(1 - \frac{(\delta + \norm{\gest - h}_{\infty})^2}{2} \right), \label{eq:zstar_g_bd} \\
    \norm{z - \gest}_2^2  &= 2(n - \real(z^*\gest)) \leq n(\delta + \norm{\gest - h}_{\infty})^2. \label{eq:z_g_norm_bd}
\end{align}
Finally, for any $i=1,\dots,n$ we can bound the term $\real(\gest_i^*(L \gest)_i)$ in two ways as follows:
\begin{align*} 
    \real(\gest_i^*(L \gest)_i) =  \gest_i^* \sum_{j: \set{i,j} \in E} (\gest_i - \gest_j) \leq  \sum_{j: \set{i,j} \in E} \abs{\gest_i - \gest_j} \leq 2 \triangle. 
\end{align*}
and by using the triangle inequality
\begin{align} \label{eq:Lgest_bd}
    \real(\gest_i^*(L \gest)_i) \leq   \|L h \|_{\infty} + \|L(g-h)\|_{\infty} \leq \triangle \left( B_n + 2 \|\ghat -h\|_\infty\right).
\end{align}
Plugging \eqref{eq:zi_gi_bd}, \eqref{eq:zstar_g_bd}, \eqref{eq:z_g_norm_bd}, \eqref{eq:Lgest_bd} in \eqref{eq:quad_bd_1} we arrive at the bound 
\begin{align*} \label{eq:quad_bd_2}
   u^* M u \geq \norm{u}_2^2 \left( 1 - \frac{(\delta + \norm{\gest - h}_{\infty})^2}{2} - 2\lambda \triangle \min\left\{1, \frac{B_n}{2} +  \|\ghat -h\|_\infty \right\} - \frac{(\delta + \norm{\gest - h}_{\infty})^2}{1 - \frac{(\delta + \norm{\gest - h}_{\infty})^2}{2}} \right) 
\end{align*}
One can verify that $u^* M u \geq \norm{u}_2^2/4$ if $\lambda \triangle \leq 1/8$ and $\delta + \norm{\gest - h}_{\infty} \leq \sqrt{2}/3$. Finally, note that the condition $\lambda \triangle \leq 1/8$ satisfies the condition of Theorem \ref{thm:main_linf_qcqp}, and hence, using the bound on $\norm{\gest - h}_{\infty}$ therein, one can verify that 
\begin{align*}
\delta + \norm{\gest - h}_{\infty} \leq \delta + \sqrt{\frac{8}{7}(3\delta +  \lambda\triangle (\smooth_n^2 + \sqrt{2}))}. 
\end{align*}
Therefore $\delta + \norm{\gest - h}_{\infty} \leq \sqrt{2}/3$ holds provided $\delta + \sqrt{\frac{8}{7}(3\delta +  \lambda\triangle (\smooth_n^2 + \sqrt{2}))} \leq \sqrt{2}/3$. This completes the proof.
\end{proof}

\FloatBarrier

\section{Numerical simulations} \label{sec:sims}
As a proof of concept, numerical illustrations of Algorithm~\ref{algo:Main} for denoising and unwrapping mod 1 samples are given firstly on artificial 1D examples, and then, on a 2D problem constructed with real data.
\subsection{1D example}
The output of Algorithm~\ref{algo:Main} is compared with two other methods which are described in more detail in Section~\ref{subsec:existing_work}, namely a trust region subproblem (TRS) and an unconstrained quadratic program (UCQP).
Two example functions are chosen to illustrate unwrapping and denoising on a uniform grid when $d=1$,
\begin{itemize}
    \item \textbf{Example 1}: $f: [0,1]\to \mathbb{R},\quad x\mapsto  \sin(4\pi x)$,
    \item \textbf{Example 2}: $f: [0,1]\to \mathbb{R},\quad x\mapsto 4x\cos(2\pi x)^2 - 2\sin(2\pi x)^2 + 4.7$.
\end{itemize}
\begin{figure}[h]
\centering
\includegraphics[scale=0.9,trim={1.8cm 1.2cm 1.8cm 0.8cm}, clip]{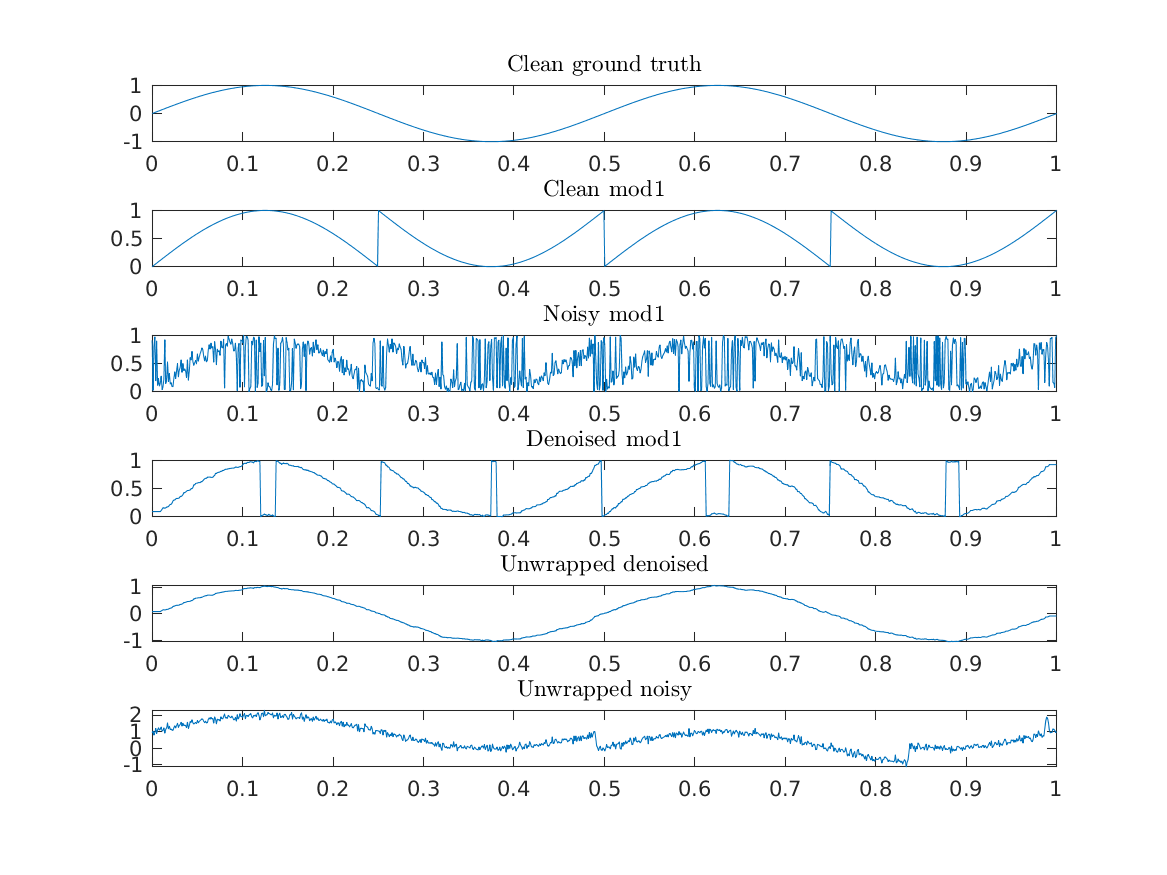}
\caption{kNN denoising and unwrapping for Example 1. Parameters: $n = 10^3$, $C = 0.09$. \label{fig:DenoisingExample_ex2}}
\end{figure}
\begin{figure}[h!]
\centering
\begin{subfigure}[b]{0.49\textwidth}
%
%
\begin{tikzpicture}

\begin{axis}[%
width=0.7\textwidth,
height=0.7\textwidth,
at={(1.011in,0.669in)},
scale only axis,
xmin=95,
xmax=1005,
xlabel style={font=\color{white!15!black}},
xlabel={$n$},
ymin=0.02,
ymax=0.14,
ylabel style={font=\color{white!15!black}},
ylabel={Wrap around MSE},
axis background/.style={fill=white},
legend style={legend cell align=left, align=left, draw=white!15!black},
yticklabel style={
            /pgf/number format/fixed,
            /pgf/number format/precision=2,
            /pgf/number format/fixed zerofill
        },
]
\addplot [color=blue, mark size=1.0pt, mark=*, mark options={solid, fill=blue, blue}]
  table[row sep=crcr]{%
100	0.0925718421617097\\
150	0.0666903202784698\\
200	0.0614055477140928\\
250	0.052239736960158\\
300	0.0499161481069273\\
350	0.0449007821312493\\
400	0.0445481205327156\\
450	0.0421007914424146\\
500	0.0396485299262736\\
550	0.0391040599718312\\
600	0.0379785881014804\\
650	0.0362984401121319\\
700	0.0360224478321758\\
750	0.0359065512476312\\
800	0.0335156373312016\\
850	0.0330424380448347\\
900	0.0324612812097477\\
950	0.0315556718735505\\
1000	0.0308902558587181\\
};
\addlegendentry{kNN}

\addplot [color=red, mark size=1.0pt, mark=*, mark options={solid, fill=red, red}]
  table[row sep=crcr]{%
100	0.0956110081460097\\
150	0.0689081902871699\\
200	0.0566811730160186\\
250	0.0513980091508275\\
300	0.0484864819564342\\
350	0.0441898206805156\\
400	0.0422293064539145\\
450	0.0413384051065958\\
500	0.0391318052664424\\
550	0.0380620103263706\\
600	0.0373163879107826\\
650	0.0365407300658216\\
700	0.0348928581477444\\
750	0.0350296109242771\\
800	0.0340861636989492\\
850	0.0327360758295533\\
900	0.0329914511850095\\
950	0.0323415407611012\\
1000	0.0320756276768504\\
};
\addlegendentry{TRS}

\addplot [color=green, mark size=1.0pt, mark=*, mark options={solid, fill=green, green}]
  table[row sep=crcr]{%
100	0.084560498288168\\
150	0.0684869326679298\\
200	0.0603860460664958\\
250	0.0527011094723898\\
300	0.0496690488879835\\
350	0.046981235171547\\
400	0.0447003532911585\\
450	0.0421770951933794\\
500	0.0416649083408557\\
550	0.0400620571224048\\
600	0.0396589359808057\\
650	0.0382398232655041\\
700	0.0375294087948371\\
750	0.0370941466833506\\
800	0.0357909676527939\\
850	0.0359525349970529\\
900	0.0354905238589301\\
950	0.0347380640676589\\
1000	0.0344590044549803\\
};
\addlegendentry{UCQP}

\addplot [color=black, mark size=1.3pt, mark=triangle*, mark options={solid, rotate=90, fill=black, black}]
  table[row sep=crcr]{%
100	0.119015981416182\\
150	0.118275131189152\\
200	0.118046780960033\\
250	0.120005577905754\\
300	0.12139307852256\\
350	0.119807531790135\\
400	0.119524120247871\\
450	0.119830155205483\\
500	0.119643734019314\\
550	0.11981637970435\\
600	0.120656956792477\\
650	0.12042636986446\\
700	0.119914536432794\\
750	0.120802034930048\\
800	0.119894286633133\\
850	0.119079943606681\\
900	0.11978345508958\\
950	0.119262945768902\\
1000	0.120207938267943\\
};
\addlegendentry{Noisy}


\addplot [color=blue]
 plot [error bars/.cd, y dir = both, y explicit]
 table[row sep=crcr, y error plus index=2, y error minus index=3]{%
100	0.0925718421617097	0.0114393154785438	0.0114393154785438\\
150	0.0666903202784698	0.0100346056642021	0.0100346056642021\\
200	0.0614055477140928	0.00465619216291088	0.00465619216291088\\
250	0.052239736960158	0.00531234273432094	0.00531234273432094\\
300	0.0499161481069273	0.00456612916746215	0.00456612916746215\\
350	0.0449007821312493	0.00411862183554548	0.00411862183554548\\
400	0.0445481205327156	0.00435066777771866	0.00435066777771866\\
450	0.0421007914424146	0.00341733457171077	0.00341733457171077\\
500	0.0396485299262736	0.00357002136664103	0.00357002136664103\\
550	0.0391040599718312	0.00289385226732931	0.00289385226732931\\
600	0.0379785881014804	0.00279094846953559	0.00279094846953559\\
650	0.0362984401121319	0.00286729488017429	0.00286729488017429\\
700	0.0360224478321758	0.00285103244779339	0.00285103244779339\\
750	0.0359065512476312	0.00268397468242401	0.00268397468242401\\
800	0.0335156373312016	0.00241507362384202	0.00241507362384202\\
850	0.0330424380448347	0.00239667136683642	0.00239667136683642\\
900	0.0324612812097477	0.00191352813377951	0.00191352813377951\\
950	0.0315556718735505	0.00262344353258998	0.00262344353258998\\
1000	0.0308902558587181	0.00234068071788697	0.00234068071788697\\
};


\addplot [color=red]
 plot [error bars/.cd, y dir = both, y explicit]
 table[row sep=crcr, y error plus index=2, y error minus index=3]{%
100	0.0956110081460097	0.0144463881616375	0.0144463881616375\\
150	0.0689081902871699	0.00945406690666466	0.00945406690666466\\
200	0.0566811730160186	0.00570758680751506	0.00570758680751506\\
250	0.0513980091508275	0.00518579704257655	0.00518579704257655\\
300	0.0484864819564342	0.00448291967012536	0.00448291967012536\\
350	0.0441898206805156	0.00309629510206067	0.00309629510206067\\
400	0.0422293064539145	0.00349464693933987	0.00349464693933987\\
450	0.0413384051065958	0.00361606450343815	0.00361606450343815\\
500	0.0391318052664424	0.00296278047059419	0.00296278047059419\\
550	0.0380620103263706	0.00248130603012335	0.00248130603012335\\
600	0.0373163879107826	0.00262323715994008	0.00262323715994008\\
650	0.0365407300658216	0.00191844242839548	0.00191844242839548\\
700	0.0348928581477444	0.00268832691278409	0.00268832691278409\\
750	0.0350296109242771	0.00236035488041561	0.00236035488041561\\
800	0.0340861636989492	0.00247137310760394	0.00247137310760394\\
850	0.0327360758295533	0.00223892480575403	0.00223892480575403\\
900	0.0329914511850095	0.00252309498918289	0.00252309498918289\\
950	0.0323415407611012	0.00233344252775367	0.00233344252775367\\
1000	0.0320756276768504	0.00205727355910629	0.00205727355910629\\
};


\addplot [color=green]
 plot [error bars/.cd, y dir = both, y explicit]
 table[row sep=crcr, y error plus index=2, y error minus index=3]{%
100	0.084560498288168	0.00824930626130678	0.00824930626130678\\
150	0.0684869326679298	0.00661636015251172	0.00661636015251172\\
200	0.0603860460664958	0.00539639794691362	0.00539639794691362\\
250	0.0527011094723898	0.00488606043067434	0.00488606043067434\\
300	0.0496690488879835	0.0047539880873565	0.0047539880873565\\
350	0.046981235171547	0.00385297101588654	0.00385297101588654\\
400	0.0447003532911585	0.0041877631741323	0.0041877631741323\\
450	0.0421770951933794	0.00316933826225099	0.00316933826225099\\
500	0.0416649083408557	0.00268632680293135	0.00268632680293135\\
550	0.0400620571224048	0.00313955778582778	0.00313955778582778\\
600	0.0396589359808057	0.00262890062623371	0.00262890062623371\\
650	0.0382398232655041	0.00256960045914788	0.00256960045914788\\
700	0.0375294087948371	0.00279390919433899	0.00279390919433899\\
750	0.0370941466833506	0.00304451172942413	0.00304451172942413\\
800	0.0357909676527939	0.0020132618802117	0.0020132618802117\\
850	0.0359525349970529	0.00211032252285168	0.00211032252285168\\
900	0.0354905238589301	0.00256667412049964	0.00256667412049964\\
950	0.0347380640676589	0.00195740184897064	0.00195740184897064\\
1000	0.0344590044549803	0.00179866988457158	0.00179866988457158\\
};

\addplot [color=black]
 plot [error bars/.cd, y dir = both, y explicit]
 table[row sep=crcr, y error plus index=2, y error minus index=3]{%
100	0.119015981416182	0.00884887785159505	0.00884887785159505\\
150	0.118275131189152	0.00739893011682572	0.00739893011682572\\
200	0.118046780960033	0.00656910114137755	0.00656910114137755\\
250	0.120005577905754	0.0053189410207801	0.0053189410207801\\
300	0.12139307852256	0.00478776108334696	0.00478776108334696\\
350	0.119807531790135	0.00450876798964112	0.00450876798964112\\
400	0.119524120247871	0.00415615060466423	0.00415615060466423\\
450	0.119830155205483	0.0037359653706664	0.0037359653706664\\
500	0.119643734019314	0.0034512685321494	0.0034512685321494\\
550	0.11981637970435	0.00353664134645258	0.00353664134645258\\
600	0.120656956792477	0.0034267834459552	0.0034267834459552\\
650	0.12042636986446	0.00334481813526727	0.00334481813526727\\
700	0.119914536432794	0.00294959540820838	0.00294959540820838\\
750	0.120802034930048	0.00337673778356554	0.00337673778356554\\
800	0.119894286633133	0.00285292280931343	0.00285292280931343\\
850	0.119079943606681	0.00304916111338271	0.00304916111338271\\
900	0.11978345508958	0.00288311731707298	0.00288311731707298\\
950	0.119262945768902	0.00263095005259702	0.00263095005259702\\
1000	0.120207938267943	0.00247812170247027	0.00247812170247027\\
};
\end{axis}
\end{tikzpicture}%
\end{subfigure}\hfill
\begin{subfigure}[b]{0.49\textwidth}
%
%
\begin{tikzpicture}

\begin{axis}[%
width=0.7\textwidth,
height=0.7\textwidth,
at={(1.011in,0.669in)},
scale only axis,
xmin=95,
xmax=1005,
xlabel style={font=\color{white!15!black}},
xlabel={$n$},
ymin=-0.2,
ymax=1.2,
ylabel style={font=\color{white!15!black}},
ylabel={MSE},
axis background/.style={fill=white},
legend style={legend cell align=left, align=left, draw=white!15!black}
]

\addplot [color=blue, mark size=1.0pt, mark=*, mark options={solid, fill=blue, blue}]
  table[row sep=crcr]{%
100	0.335414663627178\\
150	0.178555585069878\\
200	0.0762199837131883\\
250	0.0658917274462335\\
300	0.0517278875055046\\
350	0.0461251318649553\\
400	0.0473771931188364\\
450	0.0438036473514967\\
500	0.0413572200465899\\
550	0.0403455200925965\\
600	0.0397109231371738\\
650	0.0377006652629591\\
700	0.0368673702379985\\
750	0.0367600325587473\\
800	0.0347604655605448\\
850	0.0343140832958685\\
900	0.0335021689378553\\
950	0.0324777380905308\\
1000	0.0321732624558777\\
};
\addlegendentry{kNN}

\addplot [color=red, mark size=1.0pt, mark=*, mark options={solid, fill=red, red}]
  table[row sep=crcr]{%
100	0.495005873795914\\
150	0.236228336459594\\
200	0.0712891177465969\\
250	0.0546177868747506\\
300	0.0504337749333957\\
350	0.0476632080402951\\
400	0.0442228330316765\\
450	0.0428622095290986\\
500	0.0403348715168119\\
550	0.0390627554012755\\
600	0.0392795525469969\\
650	0.0382895396047606\\
700	0.0365288988353365\\
750	0.036523816539696\\
800	0.0359285410093866\\
850	0.0339408866354022\\
900	0.0343189498733724\\
950	0.033451380918684\\
1000	0.0335462164619445\\
};
\addlegendentry{TRS}

\addplot [color=green, mark size=1.0pt, mark=*, mark options={solid, fill=green, green}]
  table[row sep=crcr]{%
100	0.297315394036055\\
150	0.131466863096044\\
200	0.0845366076445992\\
250	0.0548558035341657\\
300	0.0518333458623859\\
350	0.0492740227574149\\
400	0.0459647467882573\\
450	0.0446471755819917\\
500	0.0427916400355689\\
550	0.0418935277511215\\
600	0.0414102772001693\\
650	0.0402900008235282\\
700	0.0392497584976353\\
750	0.0390562789002127\\
800	0.0373535208332508\\
850	0.0375553969185762\\
900	0.037031527090952\\
950	0.035860037009796\\
1000	0.035766938728163\\
};
\addlegendentry{UCQP}

\addplot [color=black, mark size=1.3pt, mark=triangle*, mark options={solid, rotate=90, fill=black, black}]
  table[row sep=crcr]{%
100	0.288248119801725\\
150	0.378923277157153\\
200	0.382104339986587\\
250	0.289160836829882\\
300	0.37128045640465\\
350	0.515416954689873\\
400	0.516677311961897\\
450	0.530348208003375\\
500	0.501255633658429\\
550	0.557983386597323\\
600	0.622014828656269\\
650	0.663285790984183\\
700	0.612457455719728\\
750	0.605852995502545\\
800	0.708099704496549\\
850	0.718072920411894\\
900	0.786384300264658\\
950	0.754401840131202\\
1000	0.597226649709252\\
};
\addlegendentry{Noisy}



\addplot [color=blue]
 plot [error bars/.cd, y dir = both, y explicit]
 table[row sep=crcr, y error plus index=2, y error minus index=3]{%
100	0.335414663627178	0.26430802159396	0.26430802159396\\
150	0.178555585069878	0.199430156117186	0.199430156117186\\
200	0.0762199837131883	0.0860488380049692	0.0860488380049692\\
250	0.0658917274462335	0.0879592091576373	0.0879592091576373\\
300	0.0517278875055046	0.0059477519687284	0.0059477519687284\\
350	0.0461251318649553	0.00515919560900999	0.00515919560900999\\
400	0.0473771931188364	0.0067309057063189	0.0067309057063189\\
450	0.0438036473514967	0.00526138796738274	0.00526138796738274\\
500	0.0413572200465899	0.0045721128279558	0.0045721128279558\\
550	0.0403455200925965	0.00354882911857053	0.00354882911857053\\
600	0.0397109231371738	0.0046088614798604	0.0046088614798604\\
650	0.0377006652629591	0.00331250805895449	0.00331250805895449\\
700	0.0368673702379985	0.00309459058297167	0.00309459058297167\\
750	0.0367600325587473	0.0030124949898623	0.0030124949898623\\
800	0.0347604655605448	0.00335161035646106	0.00335161035646106\\
850	0.0343140832958685	0.00310891899693169	0.00310891899693169\\
900	0.0335021689378553	0.00246087474202897	0.00246087474202897\\
950	0.0324777380905308	0.00302900025818255	0.00302900025818255\\
1000	0.0321732624558777	0.00320811365316976	0.00320811365316976\\
};

\addplot [color=red]
 plot [error bars/.cd, y dir = both, y explicit]
 table[row sep=crcr, y error plus index=2, y error minus index=3]{%
100	0.495005873795914	0.300565029849951	0.300565029849951\\
150	0.236228336459594	0.239975271497659	0.239975271497659\\
200	0.0712891177465969	0.0665122926069388	0.0665122926069388\\
250	0.0546177868747506	0.00755763230987725	0.00755763230987725\\
300	0.0504337749333957	0.00554194854289963	0.00554194854289963\\
350	0.0476632080402951	0.00559920499580079	0.00559920499580079\\
400	0.0442228330316765	0.00544621506193924	0.00544621506193924\\
450	0.0428622095290986	0.00503794397223175	0.00503794397223175\\
500	0.0403348715168119	0.00329723765415021	0.00329723765415021\\
550	0.0390627554012755	0.00298818207221838	0.00298818207221838\\
600	0.0392795525469969	0.00406772627623179	0.00406772627623179\\
650	0.0382895396047606	0.00352267141020176	0.00352267141020176\\
700	0.0365288988353365	0.00395112858283516	0.00395112858283516\\
750	0.036523816539696	0.00310872195013833	0.00310872195013833\\
800	0.0359285410093866	0.00344128775422996	0.00344128775422996\\
850	0.0339408866354022	0.00288075916374246	0.00288075916374246\\
900	0.0343189498733724	0.00330157922195938	0.00330157922195938\\
950	0.033451380918684	0.00306909703426014	0.00306909703426014\\
1000	0.0335462164619445	0.00285522156930927	0.00285522156930927\\
};


\addplot [color=green]
 plot [error bars/.cd, y dir = both, y explicit]
 table[row sep=crcr, y error plus index=2, y error minus index=3]{%
100	0.297315394036055	0.238692608361066	0.238692608361066\\
150	0.131466863096044	0.165190542090695	0.165190542090695\\
200	0.0845366076445992	0.108837829963133	0.108837829963133\\
250	0.0548558035341657	0.00601259224092583	0.00601259224092583\\
300	0.0518333458623859	0.00652512529916838	0.00652512529916838\\
350	0.0492740227574149	0.00543080437144109	0.00543080437144109\\
400	0.0459647467882573	0.00466823076859921	0.00466823076859921\\
450	0.0446471755819917	0.00445575143181852	0.00445575143181852\\
500	0.0427916400355689	0.00371259790064496	0.00371259790064496\\
550	0.0418935277511215	0.00403376084285229	0.00403376084285229\\
600	0.0414102772001693	0.0041316323731762	0.0041316323731762\\
650	0.0402900008235282	0.00383347528893143	0.00383347528893143\\
700	0.0392497584976353	0.00423020102389422	0.00423020102389422\\
750	0.0390562789002127	0.00422999430804358	0.00422999430804358\\
800	0.0373535208332508	0.00281547222074163	0.00281547222074163\\
850	0.0375553969185762	0.00345087477404495	0.00345087477404495\\
900	0.037031527090952	0.00348653423809483	0.00348653423809483\\
950	0.035860037009796	0.00275561517158382	0.00275561517158382\\
1000	0.035766938728163	0.00291415409310818	0.00291415409310818\\
};


\addplot [color=black]
 plot [error bars/.cd, y dir = both, y explicit]
 table[row sep=crcr, y error plus index=2, y error minus index=3]{%
100	0.288248119801725	0.210337523010702	0.210337523010702\\
150	0.378923277157153	0.266301974870641	0.266301974870641\\
200	0.382104339986587	0.275358942345669	0.275358942345669\\
250	0.289160836829882	0.262964068915543	0.262964068915543\\
300	0.37128045640465	0.28964223066878	0.28964223066878\\
350	0.515416954689873	0.374240757183105	0.374240757183105\\
400	0.516677311961897	0.348375483005141	0.348375483005141\\
450	0.530348208003375	0.389271831791571	0.389271831791571\\
500	0.501255633658429	0.303241220501532	0.303241220501532\\
550	0.557983386597323	0.418631062975315	0.418631062975315\\
600	0.622014828656269	0.38752148440873	0.38752148440873\\
650	0.663285790984183	0.314460834857172	0.314460834857172\\
700	0.612457455719728	0.305369785260269	0.305369785260269\\
750	0.605852995502545	0.422099567179537	0.422099567179537\\
800	0.708099704496549	0.418459788223531	0.418459788223531\\
850	0.718072920411894	0.492265732209777	0.492265732209777\\
900	0.786384300264658	0.409791535656971	0.409791535656971\\
950	0.754401840131202	0.456036954756993	0.456036954756993\\
1000	0.597226649709252	0.306541780111847	0.306541780111847\\
};

\end{axis}
\end{tikzpicture}%
\end{subfigure}
\caption{Comparisons between error rates obtained by (kNN), (UCQP) and (TRS) for Example 1, see Figure~\ref{fig:DenoisingExample_ex2}. Error bars are standard deviations over 50 Monte-Carlo runs. Parameters: $C = 0.09$, $\kappa = 0.04$.
\label{fig:DenoisingErrorRates_ex2}}
\end{figure}
Clearly, the modulo $1$ samples of the second example function, given in Figure~\ref{fig:DenoisingExample_ex1}, have a more complex pattern, with respect to the samples of the first example function in Figure~\ref{fig:DenoisingExample_ex2}.
For the unwrapping performance plots, we also compare with the unwrapping performed on the raw data (so without any denoising), which is given at bottom of Figure~\ref{fig:DenoisingExample_ex2} and Figure~\ref{fig:DenoisingExample_ex1}. Namely, the advantage of the denoising procedure before the unwrapping stage is clear by comparing the two last rows of the latter figures. Indeed, unwrapping the noisy $\bmod$ $1$ samples can yield spurious jumps in the recovered function values.
The denoising performance as a function of the number of samples $n$ can be visualized in Figure~\ref{fig:DenoisingErrorRates_ex2} and Figure~\ref{fig:DenoisingErrorRates_ex1}, respectively. These error plots display averages over 50 Monte-Carlo trials for Gaussian noise for (i) recovery of mod 1 samples with respect to the mean square wrap-around error, and (ii) unwrapped samples (after alignment) with respect to the Mean Square Error (MSE). The alignment procedure follows the same methodology as in \cite{CMT18_long}, i.e., it relies on the determination of the mode of a histogram constructed from the distances between the unwrapped and clean samples.
%
\begin{figure}[h!]
\centering
\includegraphics[scale=0.9,trim={1.8cm 1.2cm 1.8cm 0.8cm}, clip]{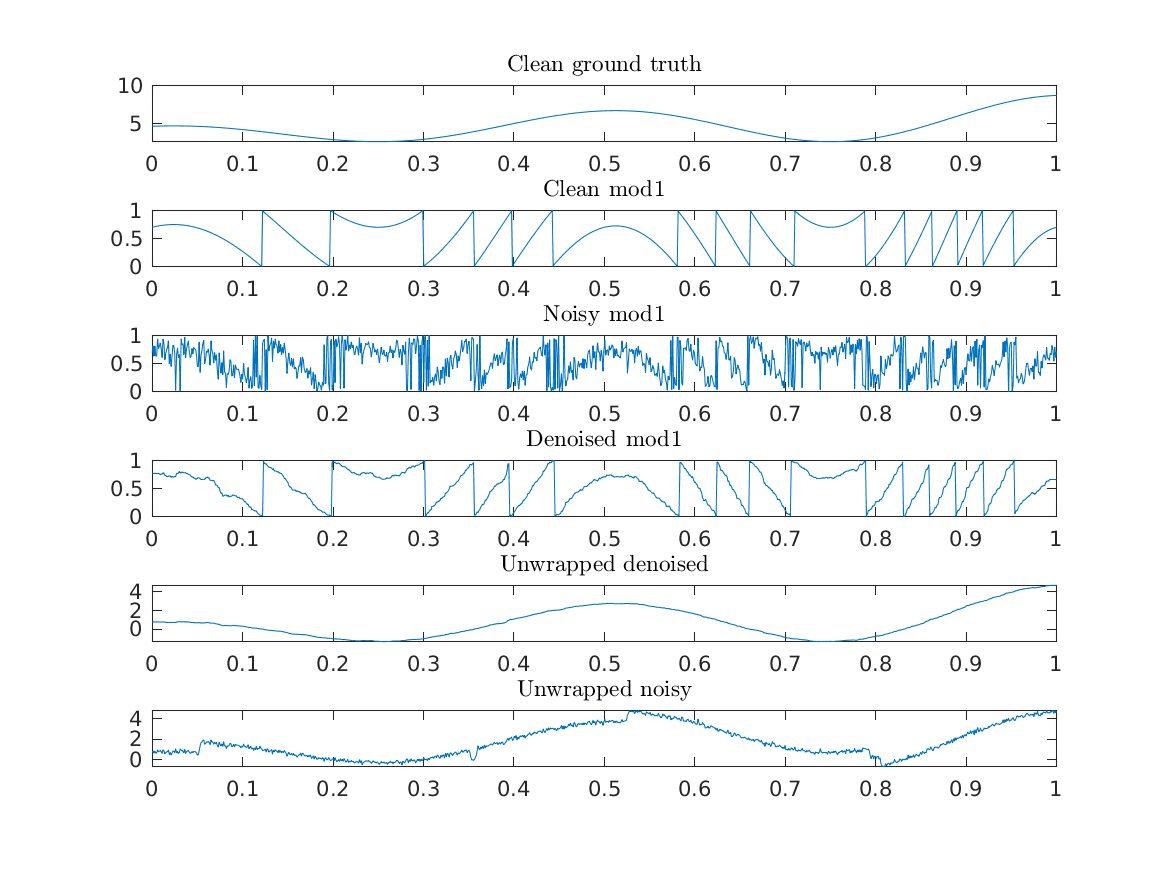}
\caption{kNN denoising and unwrapping for example 2. Parameters: $n = 10^3$, $C = 0.07$. \label{fig:DenoisingExample_ex1}}
\end{figure}
%
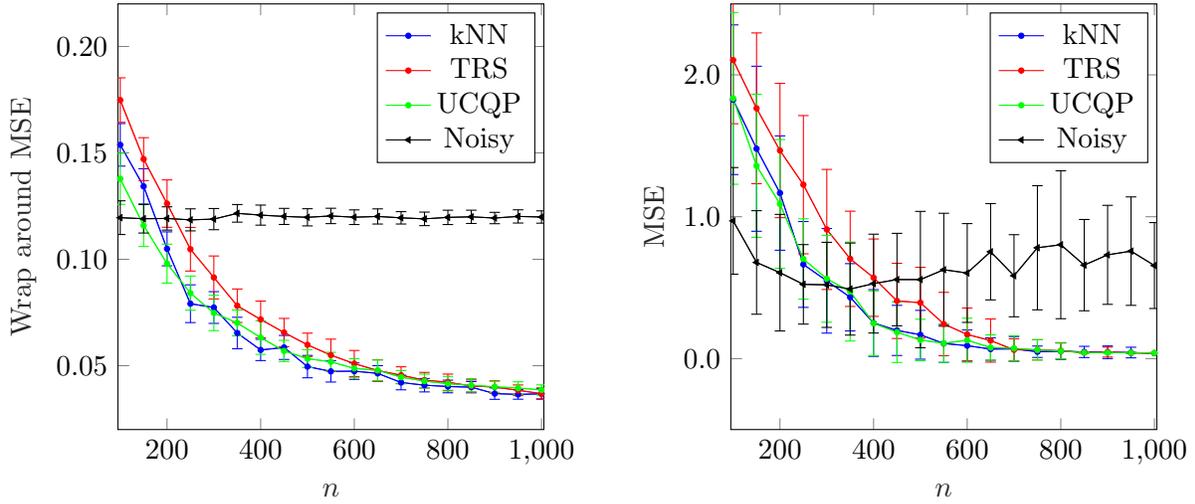
\begin{figure}[h!]
\centering
\begin{subfigure}[b]{0.49\textwidth}
%
%
\begin{tikzpicture}

\begin{axis}[%
width = 0.7\textwidth,
height = 0.7\textwidth,
at={(1.011in,0.669in)},
scale only axis,
xmin=95,
xmax=1005,
xlabel style={font=\color{white!15!black}},
xlabel={$n$},
ymin=0.02,
ymax=0.22,
ylabel style={font=\color{white!15!black}},
ylabel={Wrap around MSE},
axis background/.style={fill=white},
yticklabel style={
            /pgf/number format/fixed,
            /pgf/number format/precision=2,
            /pgf/number format/fixed zerofill
        },
]
\addplot [color=blue, mark size=1.0pt, mark=*, mark options={solid, fill=blue, blue}]
  table[row sep=crcr]{%
100	0.153798154577139\\
150	0.134273114641413\\
200	0.104925820665816\\
250	0.0791305777503625\\
300	0.0773407420773675\\
350	0.0653601188301108\\
400	0.0574002058678111\\
450	0.0585742416004332\\
500	0.0495761491199599\\
550	0.0473377976515018\\
600	0.0474426264232085\\
650	0.0464003961759218\\
700	0.0421128526748917\\
750	0.0408401249504456\\
800	0.0402354850722373\\
850	0.0399020868067104\\
900	0.0368923518552702\\
950	0.0365168311320332\\
1000	0.0367954547714065\\
};
\addlegendentry{kNN}

\addplot [color=red, mark size=1.0pt, mark=*, mark options={solid, fill=red, red}]
  table[row sep=crcr]{%
100	0.174816753586518\\
150	0.147080042408982\\
200	0.12617553881031\\
250	0.104723915117187\\
300	0.0914676590931708\\
350	0.0781839457231029\\
400	0.0717395922975993\\
450	0.0655992339143166\\
500	0.0598207062050119\\
550	0.0549749993125783\\
600	0.0509886401488155\\
650	0.0477433118593411\\
700	0.0455471777625261\\
750	0.0432109654035068\\
800	0.0422342253067438\\
850	0.0404747036373679\\
900	0.0399655036246173\\
950	0.0384364861922008\\
1000	0.0368791172450678\\
};
\addlegendentry{TRS}

\addplot [color=green, mark size=1.0pt, mark=*, mark options={solid, fill=green, green}]
  table[row sep=crcr]{%
100	0.13782062203062\\
150	0.115879274532208\\
200	0.097978602001695\\
250	0.0841014153471732\\
300	0.0747804893884347\\
350	0.0701057492920689\\
400	0.0632093204959428\\
450	0.0569406992612277\\
500	0.0533184959171926\\
550	0.0518325294537198\\
600	0.0487514275767543\\
650	0.0478428787202811\\
700	0.0446148497991723\\
750	0.0427689438649284\\
800	0.0415387341205111\\
850	0.0408155512103672\\
900	0.0400754481883836\\
950	0.0394897285657306\\
1000	0.0386134032690082\\
};
\addlegendentry{UCQP}

\addplot [color=black, mark size=1.3pt, mark=triangle*, mark options={solid, rotate=90, fill=black, black}]
  table[row sep=crcr]{%
100	0.119560980008362\\
150	0.119059709654392\\
200	0.11919017337829\\
250	0.11853116487909\\
300	0.118887911701252\\
350	0.121595367671944\\
400	0.120762718487265\\
450	0.120150481591267\\
500	0.119730880732556\\
550	0.120375893106136\\
600	0.119783624625407\\
650	0.120133463479633\\
700	0.119485054759521\\
750	0.119003355189846\\
800	0.119703054701423\\
850	0.119959301681906\\
900	0.119393653732902\\
950	0.120150266288875\\
1000	0.119878127942255\\
};
\addlegendentry{Noisy}

\addplot [color=blue]
 plot [error bars/.cd, y dir = both, y explicit]
 table[row sep=crcr, y error plus index=2, y error minus index=3]{%
100	0.153798154577139	0.0100058174179975	0.0100058174179975\\
150	0.134273114641413	0.00829496811644118	0.00829496811644118\\
200	0.104925820665816	0.0079657353478482	0.0079657353478482\\
250	0.0791305777503625	0.00887072887238675	0.00887072887238675\\
300	0.0773407420773675	0.00745858571784817	0.00745858571784817\\
350	0.0653601188301108	0.00739957447441304	0.00739957447441304\\
400	0.0574002058678111	0.0049894472524091	0.0049894472524091\\
450	0.0585742416004332	0.00564079634101932	0.00564079634101932\\
500	0.0495761491199599	0.00525566163454884	0.00525566163454884\\
550	0.0473377976515018	0.00500975552888894	0.00500975552888894\\
600	0.0474426264232085	0.00386880113447761	0.00386880113447761\\
650	0.0464003961759218	0.00362121300732072	0.00362121300732072\\
700	0.0421128526748917	0.00343577490180742	0.00343577490180742\\
750	0.0408401249504456	0.00314816003093411	0.00314816003093411\\
800	0.0402354850722373	0.00296521181511721	0.00296521181511721\\
850	0.0399020868067104	0.00265618333039854	0.00265618333039854\\
900	0.0368923518552702	0.00267314471843824	0.00267314471843824\\
950	0.0365168311320332	0.00224905349098874	0.00224905349098874\\
1000	0.0367954547714065	0.00237803787811094	0.00237803787811094\\
};

\addplot [color=red]
 plot [error bars/.cd, y dir = both, y explicit]
 table[row sep=crcr, y error plus index=2, y error minus index=3]{%
100	0.174816753586518	0.0104569029223759	0.0104569029223759\\
150	0.147080042408982	0.0100798100801842	0.0100798100801842\\
200	0.12617553881031	0.0111316385705209	0.0111316385705209\\
250	0.104723915117187	0.0102366966774122	0.0102366966774122\\
300	0.0914676590931708	0.0100964071412464	0.0100964071412464\\
350	0.0781839457231029	0.00787000498918576	0.00787000498918576\\
400	0.0717395922975993	0.00863067352299028	0.00863067352299028\\
450	0.0655992339143166	0.0066237012927385	0.0066237012927385\\
500	0.0598207062050119	0.00548187123464488	0.00548187123464488\\
550	0.0549749993125783	0.00750551029658304	0.00750551029658304\\
600	0.0509886401488155	0.00617568595334232	0.00617568595334232\\
650	0.0477433118593411	0.00503199396292275	0.00503199396292275\\
700	0.0455471777625261	0.00394373478964505	0.00394373478964505\\
750	0.0432109654035068	0.00366012153628352	0.00366012153628352\\
800	0.0422342253067438	0.00384396344469375	0.00384396344469375\\
850	0.0404747036373679	0.00285216103226333	0.00285216103226333\\
900	0.0399655036246173	0.00293246539617604	0.00293246539617604\\
950	0.0384364861922008	0.00251465034776513	0.00251465034776513\\
1000	0.0368791172450678	0.00271854323586807	0.00271854323586807\\
};

\addplot [color=green]
 plot [error bars/.cd, y dir = both, y explicit]
 table[row sep=crcr, y error plus index=2, y error minus index=3]{%
100	0.13782062203062	0.0120041500868268	0.0120041500868268\\
150	0.115879274532208	0.00987944730536392	0.00987944730536392\\
200	0.097978602001695	0.00912948592079228	0.00912948592079228\\
250	0.0841014153471732	0.00802542642018863	0.00802542642018863\\
300	0.0747804893884347	0.00828696654958054	0.00828696654958054\\
350	0.0701057492920689	0.0060685782676137	0.0060685782676137\\
400	0.0632093204959428	0.00787705532882959	0.00787705532882959\\
450	0.0569406992612277	0.0049728485605665	0.0049728485605665\\
500	0.0533184959171926	0.00422795013009393	0.00422795013009393\\
550	0.0518325294537198	0.00443927518796356	0.00443927518796356\\
600	0.0487514275767543	0.00451788576493299	0.00451788576493299\\
650	0.0478428787202811	0.00481526073760379	0.00481526073760379\\
700	0.0446148497991723	0.00311931778422693	0.00311931778422693\\
750	0.0427689438649284	0.00330378545542799	0.00330378545542799\\
800	0.0415387341205111	0.00343246516422998	0.00343246516422998\\
850	0.0408155512103672	0.00310561997180481	0.00310561997180481\\
900	0.0400754481883836	0.0025758260735265	0.0025758260735265\\
950	0.0394897285657306	0.00295871212753987	0.00295871212753987\\
1000	0.0386134032690082	0.00238279701152394	0.00238279701152394\\
};

\addplot [color=black]
 plot [error bars/.cd, y dir = both, y explicit]
 table[row sep=crcr, y error plus index=2, y error minus index=3]{%
100	0.119560980008362	0.00791662163756174	0.00791662163756174\\
150	0.119059709654392	0.00671369999261523	0.00671369999261523\\
200	0.11919017337829	0.00547768162457721	0.00547768162457721\\
250	0.11853116487909	0.00517624849219253	0.00517624849219253\\
300	0.118887911701252	0.00497449863651598	0.00497449863651598\\
350	0.121595367671944	0.0041450386121429	0.0041450386121429\\
400	0.120762718487265	0.00471808696119325	0.00471808696119325\\
450	0.120150481591267	0.00378162431694507	0.00378162431694507\\
500	0.119730880732556	0.00403720109784654	0.00403720109784654\\
550	0.120375893106136	0.00361070689162907	0.00361070689162907\\
600	0.119783624625407	0.00343994118301227	0.00343994118301227\\
650	0.120133463479633	0.00356407953361567	0.00356407953361567\\
700	0.119485054759521	0.00290639284899588	0.00290639284899588\\
750	0.119003355189846	0.0031741628642111	0.0031741628642111\\
800	0.119703054701423	0.00336799419042755	0.00336799419042755\\
850	0.119959301681906	0.0031164367451544	0.0031164367451544\\
900	0.119393653732902	0.00266866217114665	0.00266866217114665\\
950	0.120150266288875	0.00310760737186706	0.00310760737186706\\
1000	0.119878127942255	0.00290146328033767	0.00290146328033767\\
};

\end{axis}
\end{tikzpicture}%
\end{subfigure}\hfill
\begin{subfigure}[b]{0.49\textwidth}
%
%
\begin{tikzpicture}

\begin{axis}[%
width = 0.7\textwidth,
height = 0.7\textwidth,
at={(1.011in,0.669in)},
scale only axis,
xmin=95,
xmax=1005,
xlabel style={font=\color{white!15!black}},
xlabel={$n$},
ymin=-0.5,
ymax=2.5,
ylabel style={font=\color{white!15!black}},
ylabel={MSE},
axis background/.style={fill=white},
yticklabel style={
            /pgf/number format/fixed,
            /pgf/number format/precision=1,
            /pgf/number format/fixed zerofill
        },
]

\addplot [color=blue, mark size=1.0pt, mark=*, mark options={solid, fill=blue, blue}]
  table[row sep=crcr]{%
100	1.82489650910949\\
150	1.47917678926904\\
200	1.16700474298178\\
250	0.66369133883039\\
300	0.548379072283833\\
350	0.432357261668304\\
400	0.251051015164831\\
450	0.19881323714685\\
500	0.168001763465612\\
550	0.106715468794802\\
600	0.091262047847051\\
650	0.0673969270323127\\
700	0.0683840835227182\\
750	0.04834367438932\\
800	0.0526839463516865\\
850	0.0470661710108119\\
900	0.0447036271296892\\
950	0.0428269502173888\\
1000	0.0380884225725691\\
};
\addlegendentry{kNN}

\addplot [color=red, mark size=1.0pt, mark=*, mark options={solid, fill=red, red}]
  table[row sep=crcr]{%
100	2.10389069621728\\
150	1.76423521190525\\
200	1.46671644688293\\
250	1.22600520889835\\
300	0.910523936726631\\
350	0.703461003871666\\
400	0.570451886690433\\
450	0.406620238431925\\
500	0.394199507307344\\
550	0.243944538738523\\
600	0.170274586390196\\
650	0.127352269031124\\
700	0.0622077274275254\\
750	0.0620337401033492\\
800	0.0550700239251039\\
850	0.0420421088635763\\
900	0.0460205229077124\\
950	0.0400266572202735\\
1000	0.0385591898824889\\
};
\addlegendentry{TRS}

\addplot [color=green, mark size=1.0pt, mark=*, mark options={solid, fill=green, green}]
  table[row sep=crcr]{%
100	1.83439205736385\\
150	1.35914038122376\\
200	1.09019422447076\\
250	0.702128525538687\\
300	0.562786937682207\\
350	0.472588486843576\\
400	0.248681627982775\\
450	0.186147211524665\\
500	0.131470648105739\\
550	0.108258844875331\\
600	0.129723168261569\\
650	0.0780879640243185\\
700	0.0718865220291225\\
750	0.061770749626476\\
800	0.0537069856111946\\
850	0.0418320642924385\\
900	0.0417553783715356\\
950	0.0409994996976265\\
1000	0.0395964169265215\\
};
\addlegendentry{UCQP}

\addplot [color=black, mark size=1.3pt, mark=triangle*, mark options={solid, rotate=90, fill=black, black}]
  table[row sep=crcr]{%
100	0.971026761264218\\
150	0.677785029830423\\
200	0.605753669044636\\
250	0.523507986060279\\
300	0.519941102537277\\
350	0.490228356391999\\
400	0.528746032113317\\
450	0.557212798902196\\
500	0.557177293588506\\
550	0.626582876536989\\
600	0.603500584966159\\
650	0.753466521276782\\
700	0.584165008738905\\
750	0.7814153229414\\
800	0.802628960000568\\
850	0.659012954080241\\
900	0.7314665835895\\
950	0.757587334895182\\
1000	0.656011898901198\\
};
\addlegendentry{Noisy}


\addplot [color=blue]
 plot [error bars/.cd, y dir = both, y explicit]
 table[row sep=crcr, y error plus index=2, y error minus index=3]{%
100	1.82489650910949	0.527351956169716	0.527351956169716\\
150	1.47917678926904	0.581579974147872	0.581579974147872\\
200	1.16700474298178	0.402939959370551	0.402939959370551\\
250	0.66369133883039	0.302565609432688	0.302565609432688\\
300	0.548379072283833	0.368682959937534	0.368682959937534\\
350	0.432357261668304	0.237091042504067	0.237091042504067\\
400	0.251051015164831	0.2353787978942	0.2353787978942\\
450	0.19881323714685	0.17674840945892	0.17674840945892\\
500	0.168001763465612	0.172083328883309	0.172083328883309\\
550	0.106715468794802	0.133551544579808	0.133551544579808\\
600	0.091262047847051	0.1103602710992	0.1103602710992\\
650	0.0673969270323127	0.0790318830668068	0.0790318830668068\\
700	0.0683840835227182	0.0872671685161345	0.0872671685161345\\
750	0.04834367438932	0.0401632626537744	0.0401632626537744\\
800	0.0526839463516865	0.0585406976500256	0.0585406976500256\\
850	0.0470661710108119	0.0395611627582402	0.0395611627582402\\
900	0.0447036271296892	0.0445395660216431	0.0445395660216431\\
950	0.0428269502173888	0.0374503718456177	0.0374503718456177\\
1000	0.0380884225725691	0.00295089107142471	0.00295089107142471\\
};

\addplot [color=red]
 plot [error bars/.cd, y dir = both, y explicit]
 table[row sep=crcr, y error plus index=2, y error minus index=3]{%
100	2.10389069621728	0.448200568340739	0.448200568340739\\
150	1.76423521190525	0.53133849632672	0.53133849632672\\
200	1.46671644688293	0.473207660135176	0.473207660135176\\
250	1.22600520889835	0.487150678949561	0.487150678949561\\
300	0.910523936726631	0.423103441443183	0.423103441443183\\
350	0.703461003871666	0.335733573945207	0.335733573945207\\
400	0.570451886690433	0.272689616433689	0.272689616433689\\
450	0.406620238431925	0.264984086886044	0.264984086886044\\
500	0.394199507307344	0.249161647818091	0.249161647818091\\
550	0.243944538738523	0.22309394952311	0.22309394952311\\
600	0.170274586390196	0.186207011204017	0.186207011204017\\
650	0.127352269031124	0.152228313732075	0.152228313732075\\
700	0.0622077274275254	0.0775719026223323	0.0775719026223323\\
750	0.0620337401033492	0.0715660635267171	0.0715660635267171\\
800	0.0550700239251039	0.0571241951079896	0.0571241951079896\\
850	0.0420421088635763	0.00325759747432862	0.00325759747432862\\
900	0.0460205229077124	0.0350772237672648	0.0350772237672648\\
950	0.0400266572202735	0.00387293554949646	0.00387293554949646\\
1000	0.0385591898824889	0.00372954921429038	0.00372954921429038\\
};

\addplot [color=green]
 plot [error bars/.cd, y dir = both, y explicit]
 table[row sep=crcr, y error plus index=2, y error minus index=3]{%
100	1.83439205736385	0.605548823702429	0.605548823702429\\
150	1.35914038122376	0.504536961087574	0.504536961087574\\
200	1.09019422447076	0.453858906867593	0.453858906867593\\
250	0.702128525538687	0.283309886786492	0.283309886786492\\
300	0.562786937682207	0.306412816080416	0.306412816080416\\
350	0.472588486843576	0.348645436926507	0.348645436926507\\
400	0.248681627982775	0.228711959982663	0.228711959982663\\
450	0.186147211524665	0.213528034058625	0.213528034058625\\
500	0.131470648105739	0.146021385783975	0.146021385783975\\
550	0.108258844875331	0.135527334910791	0.135527334910791\\
600	0.129723168261569	0.155238738858745	0.155238738858745\\
650	0.0780879640243185	0.0891738547427768	0.0891738547427768\\
700	0.0718865220291225	0.0893556427014407	0.0893556427014407\\
750	0.061770749626476	0.071503728862229	0.071503728862229\\
800	0.0537069856111946	0.055815917561368	0.055815917561368\\
850	0.0418320642924385	0.00375915171348126	0.00375915171348126\\
900	0.0417553783715356	0.00380093378357986	0.00380093378357986\\
950	0.0409994996976265	0.00341077082356768	0.00341077082356768\\
1000	0.0395964169265215	0.00319271716840842	0.00319271716840842\\
};

\addplot [color=black]
 plot [error bars/.cd, y dir = both, y explicit]
 table[row sep=crcr, y error plus index=2, y error minus index=3]{%
100	0.971026761264218	0.375782786292267	0.375782786292267\\
150	0.677785029830423	0.364868876154961	0.364868876154961\\
200	0.605753669044636	0.41089163276404	0.41089163276404\\
250	0.523507986060279	0.280267604175843	0.280267604175843\\
300	0.519941102537277	0.299916200504643	0.299916200504643\\
350	0.490228356391999	0.324416391037901	0.324416391037901\\
400	0.528746032113317	0.34838818783777	0.34838818783777\\
450	0.557212798902196	0.325854587979594	0.325854587979594\\
500	0.557177293588506	0.480508121987767	0.480508121987767\\
550	0.626582876536989	0.398000346408338	0.398000346408338\\
600	0.603500584966159	0.348209180651663	0.348209180651663\\
650	0.753466521276782	0.339496968207407	0.339496968207407\\
700	0.584165008738905	0.289334298897642	0.289334298897642\\
750	0.7814153229414	0.437555244378394	0.437555244378394\\
800	0.802628960000568	0.52127228104616	0.52127228104616\\
850	0.659012954080241	0.321731753007632	0.321731753007632\\
900	0.7314665835895	0.348983444066967	0.348983444066967\\
950	0.757587334895182	0.382224883408305	0.382224883408305\\
1000	0.656011898901198	0.302825488013546	0.302825488013546\\
};

\end{axis}
\end{tikzpicture}%
\end{subfigure}
\caption{Comparisons between error rates obtained by (kNN), (UCQP) and (TRS) on the example 2, see Figure~\ref{fig:DenoisingExample_ex1}. Error bars are standard deviations over 50 Monte-Carlo runs. Parameters: $C = 0.07$, $\kappa = 0.04$.\label{fig:DenoisingErrorRates_ex1}}
\end{figure}
The Gaussian noise level in our experiments is taken to be $\sigma=0.12$ while the parameters of the methods are chosen by relying on the statistical results obtained in this work and in the related work \cite{tyagi20} addressing a similar question for (UCQP) and (TRS). 

Specifically, the parameters are chosen as follows.
\begin{itemize}
    \item
    For kNN, in view of Corollary~\ref{cor:knn_insample_rates}, the number of neighbours is chosen such that $k = \ceil{k^\star}$ with $k^\star = C n^{\frac{2}{3}}\left( \log n\right)^{\frac{1}{3}}$, where $C>0$ is given hereafter.
    \item
    For (UCQP) and (TRS),  the analysis of the corresponding problems by Tyagi \cite{tyagi20} (see Corollary $4$ and Corollary $8$ therein) indicates the choice $\lambda \asymp \left(\sigma^2 n^{10/3}/M^2 \right)^{1/4}$. Hence, for the example of Figure~\ref{fig:DenoisingExample_ex2} and Figure~\ref{fig:DenoisingExample_ex1}, we take $\lambda= \kappa n^{10/12}$ where $\kappa$ is given hereafter. Both methods rely on an appropriate smoothness graph $G = ([n], E)$ which in our experiments is taken to be the path graph where 
$E = \set{\set{i,i+1}: i = 1,\dots,n-1}$.
\end{itemize}

In the relatively simple example of Figure~\ref{fig:DenoisingErrorRates_ex2}, one observes that, for the chosen parameters, all methods have a similar performance. For the more complex example corresponding to Figure~\ref{fig:DenoisingErrorRates_ex1}, we observe that (kNN) and (UCQP) yield a slightly smaller error. Notice that the differences between all the methods are not always significant. Also, fine-tuning the parameters for a given $n$ might further improve the results, however all three methods seem to achieve a similar error when $n$ becomes large.

\subsection{2D example}
In order to provide a proof-of-concept in two dimensions, 
we  illustrate the performance of Algorithm~\ref{algo:Main} for denoising and unwrapping mod 1 samples on a 2D grid.
To do so, in Figure~\ref{fig:Vesuvius}, we simulate the reconstruction of the elevation map of Mount Vesuvius from noisy mod 1 samples by following a similar methodology as in~\cite{CMT18_long}.

Firstly, the latitude and longitude grid is rescaled to be a uniform grid in $[0,1]^2$.
Next, the elevation data is scaled down by a factor $500$. This ``change of units'' is necessary to make sure that the clean data is smooth enough so that the unwrapping of the noiseless mod 1 samples match the original noiseless data. \rev{Subsequently, elevation data is corrupted by an additive zero mean Gaussian noise before the modulo $1$ is taken. D}enoising is performed with a kNN estimator. In Figure~\ref{fig:Vesuvius}, the output of  Algorithm~\ref{algo:Main} is also compared to a simple unwrapping of the noisy data using Algorithm~\ref{algo:seq_unwrap_mult}.

The upshot is that, for a large enough noise level, the denoising step in Algorithm~\ref{algo:Main} is indeed a necessary step before applying the unwrapping algorithm.  Namely, spurious jumps are visible in the  plots of Figure~\ref{fig:jump_contour} and Figure~\ref{fig:jump_2D}. Naturally, the denoising procedure also smoothes out the peaks on top of the mount.
\begin{figure}[h!]
\centering
\begin{subfigure}[b]{0.3\textwidth}
\includegraphics[scale=0.29,trim={0cm 0cm 0cm 0.cm}]{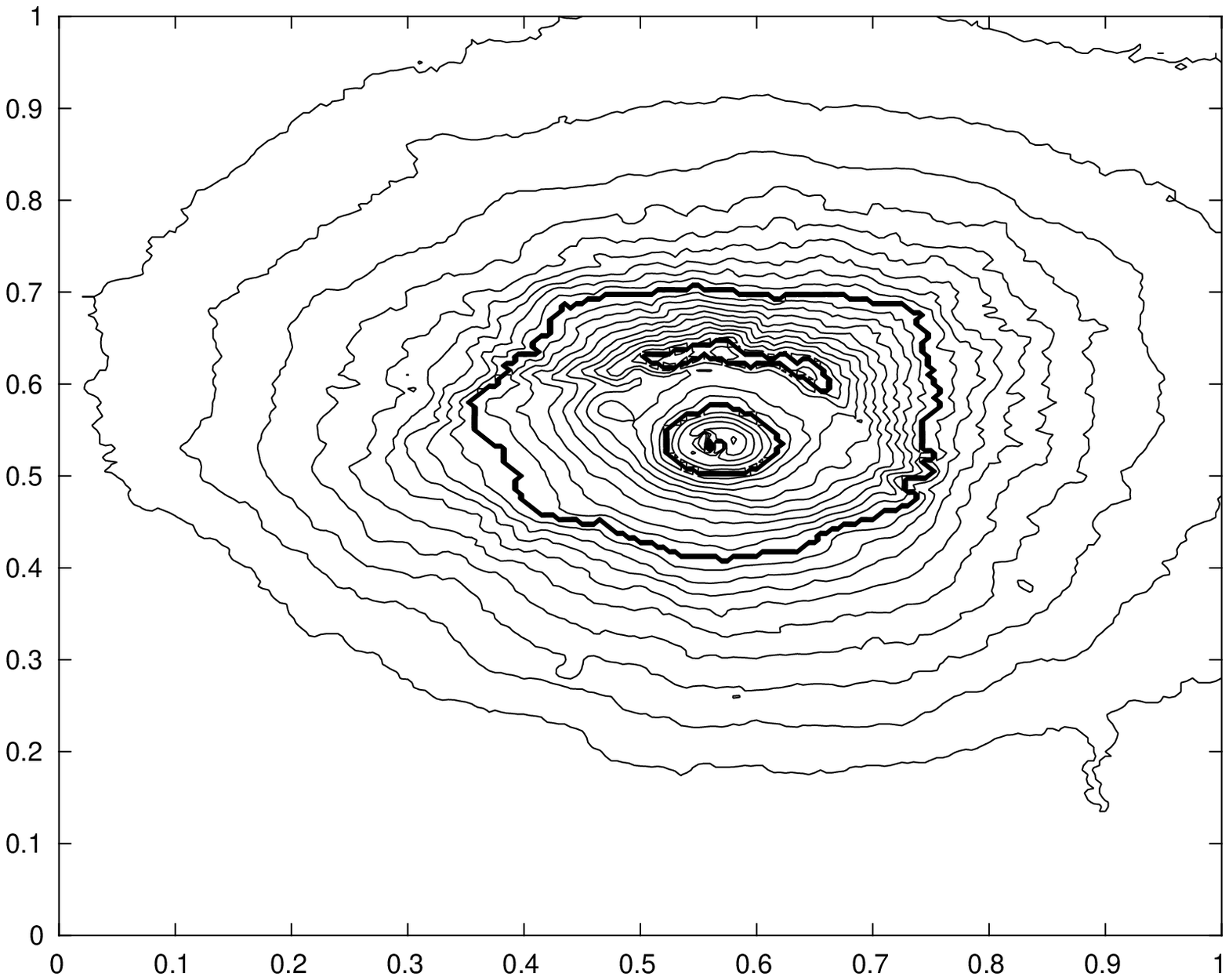}
\caption{Noiseless mod 1 samples.}
\end{subfigure}
\hfill
\begin{subfigure}[b]{0.3\textwidth}
\includegraphics[scale=0.29,trim={0cm 0cm 0cm 0.cm}]{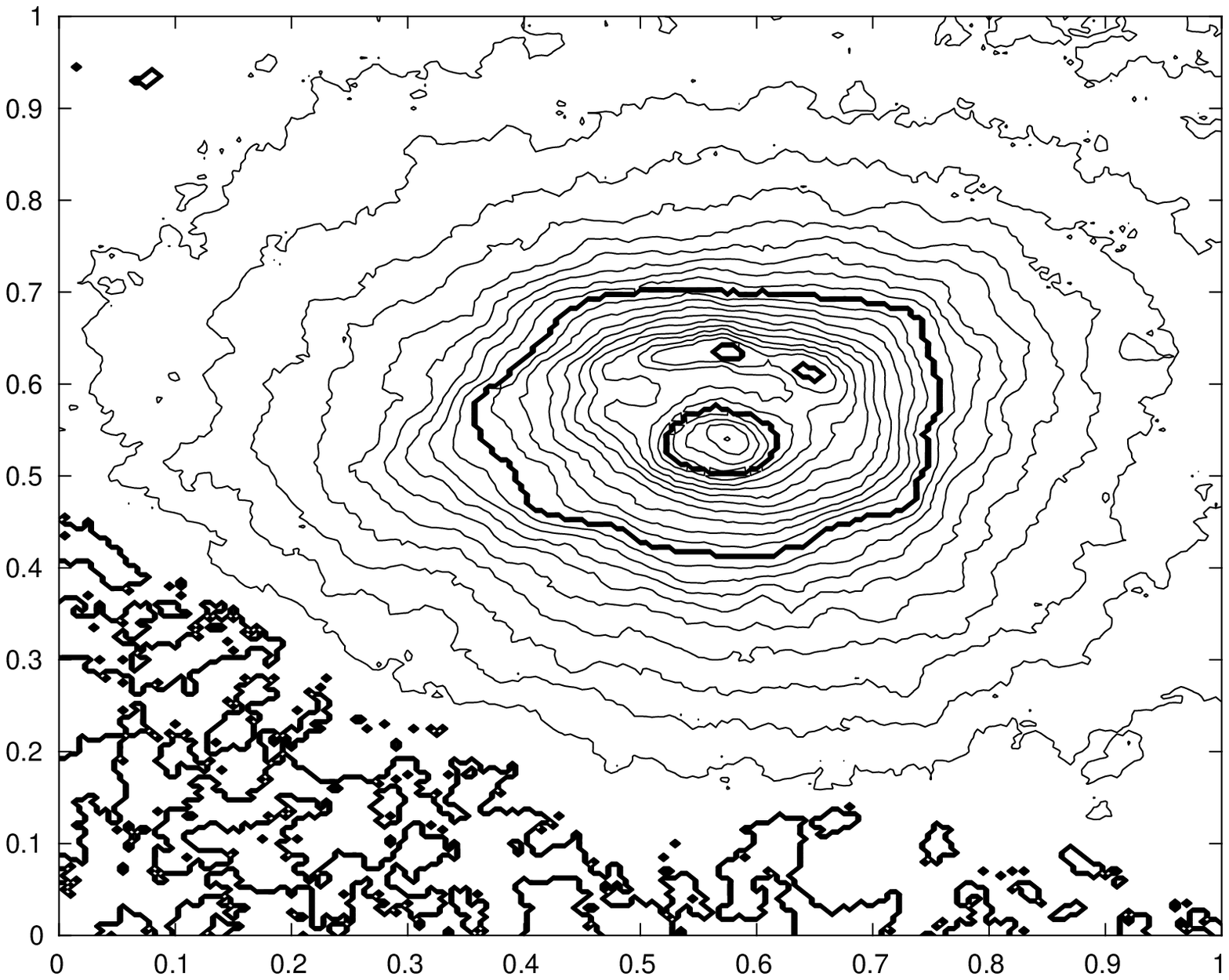}
\caption{Denoised mod 1 samples.}
\end{subfigure}
\hfill
\begin{subfigure}[b]{0.3\textwidth}
\includegraphics[scale=0.29,trim={0cm 0cm 0cm 0.cm}]{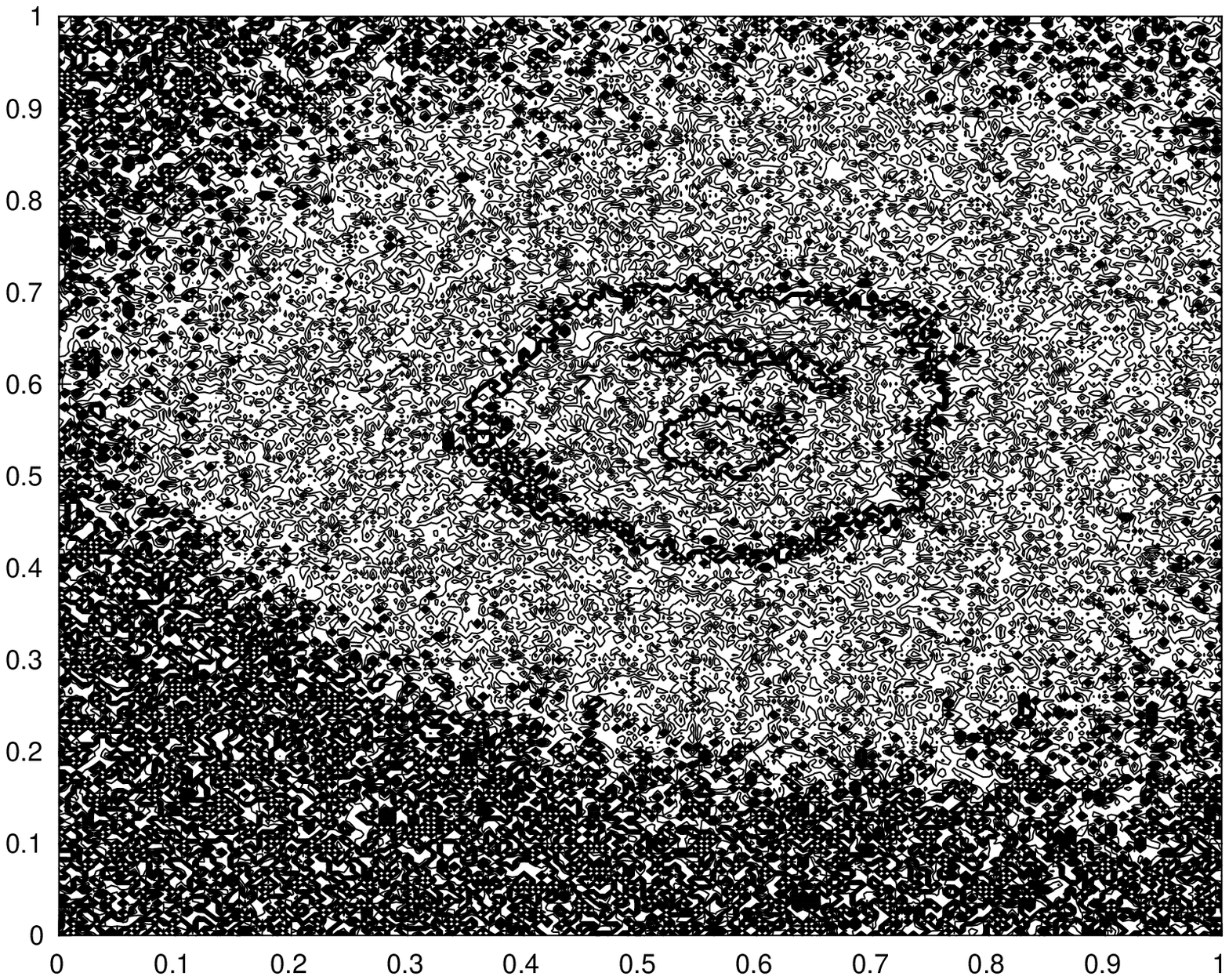}
\caption{Noisy mod 1 samples.}
\end{subfigure}
\begin{subfigure}[b]{0.3\textwidth}
\includegraphics[scale=0.29,trim={0cm 0cm 0cm 0.cm}]{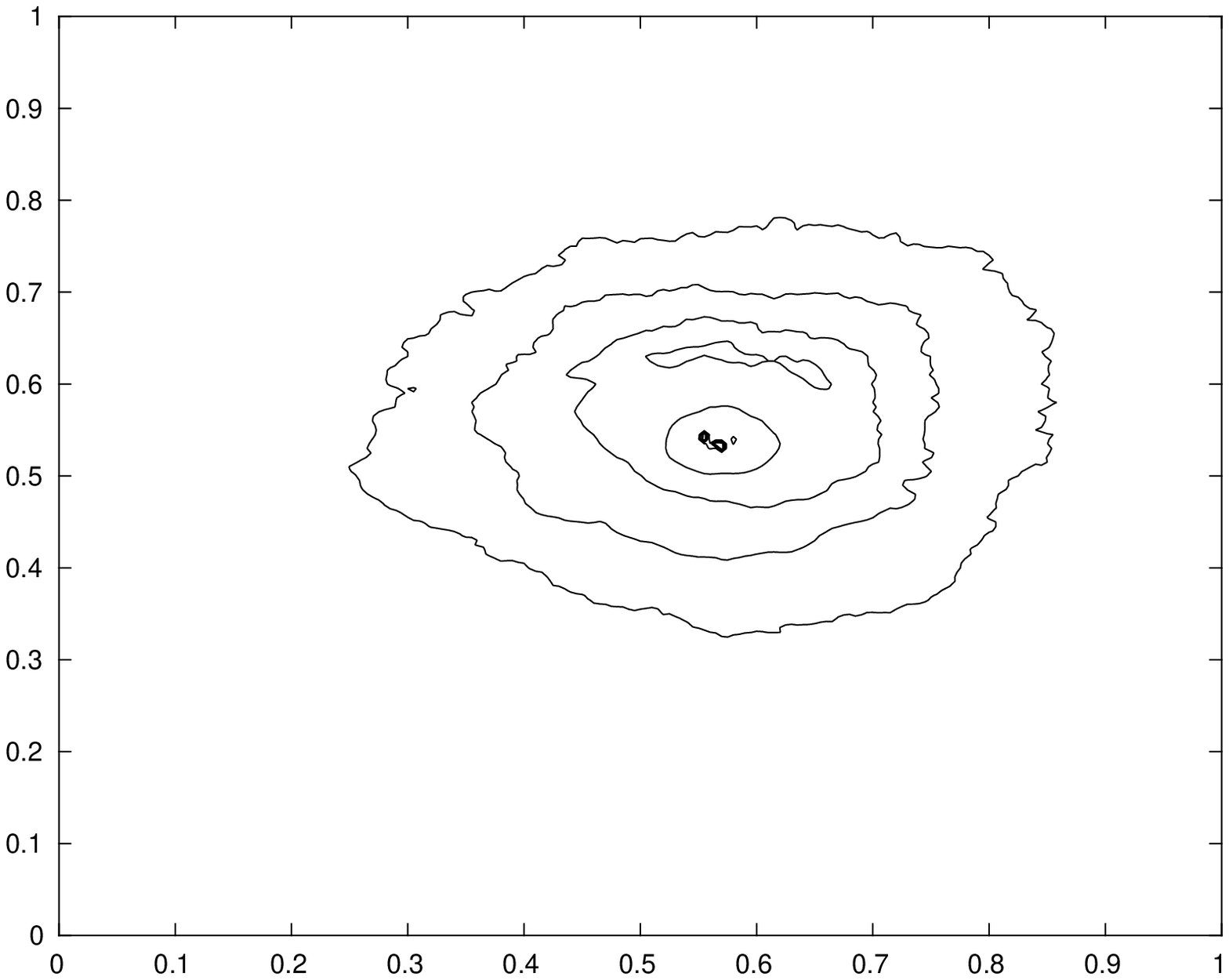}
\caption{Noiseless samples.}
\end{subfigure}
\hfill
\begin{subfigure}[b]{0.3\textwidth}
\includegraphics[scale=0.29,trim={0cm 0cm 0cm 0.cm}]{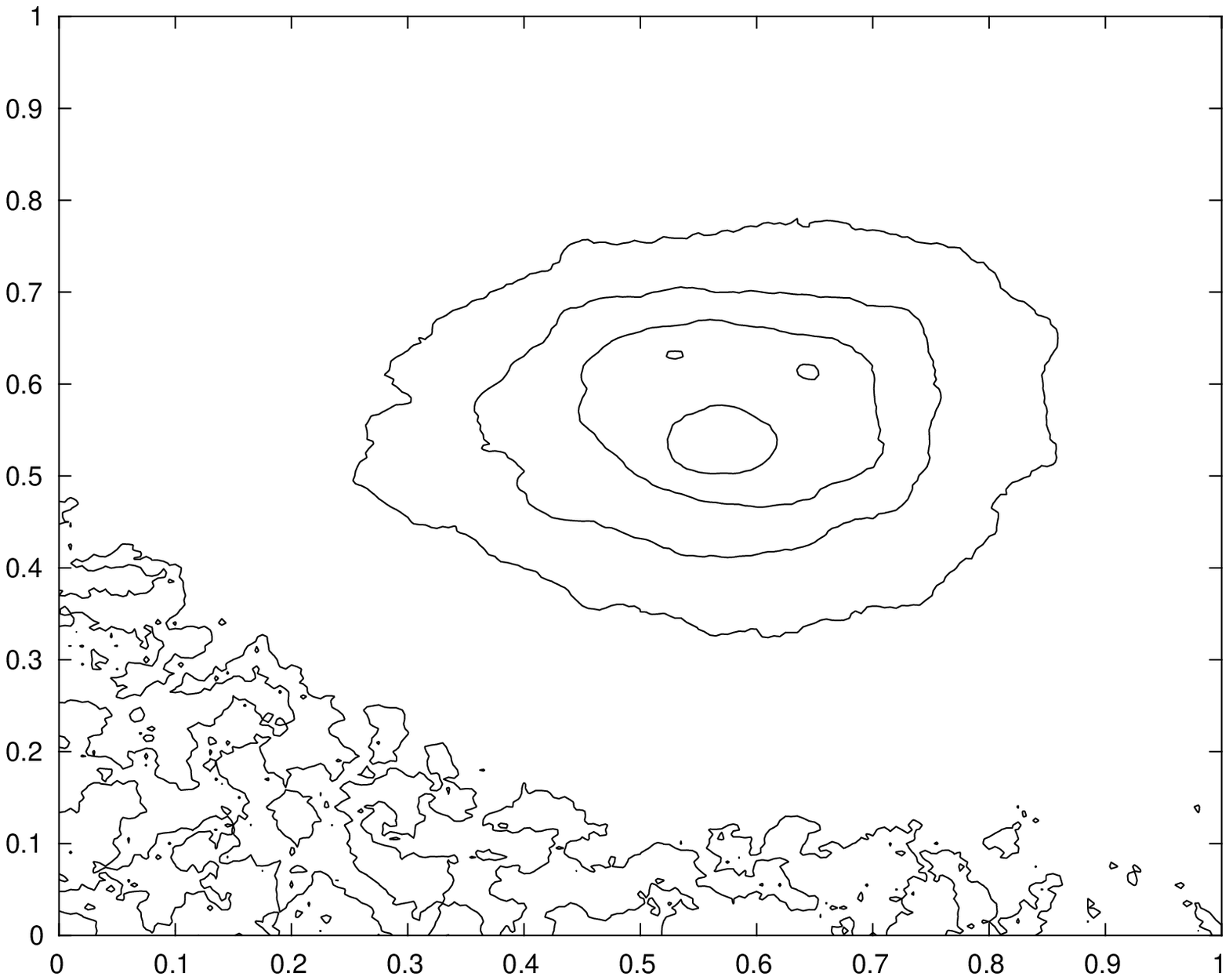}
\caption{Denoised unwrapped samples.}
\end{subfigure}
\hfill
\begin{subfigure}[b]{0.3\textwidth}
\includegraphics[scale=0.29,trim={0cm 0.1cm 0cm 0.1cm}]{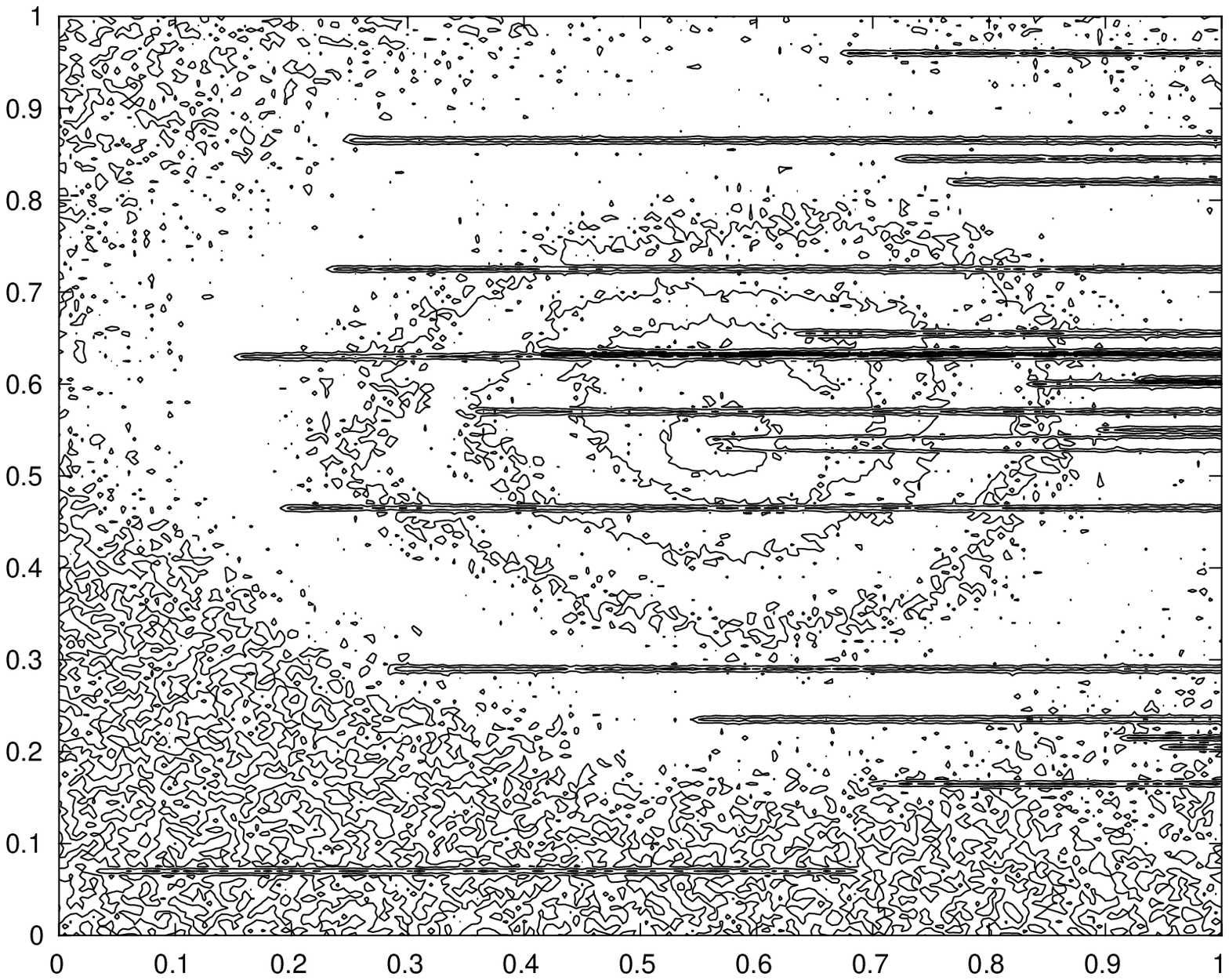}
\caption{Noisy unwrapped samples.\label{fig:jump_contour}}
\end{subfigure}
\begin{subfigure}[b]{0.29\textwidth}
\includegraphics[scale=0.25,trim={0.4cm 0cm 0.4cm 1cm}, clip]{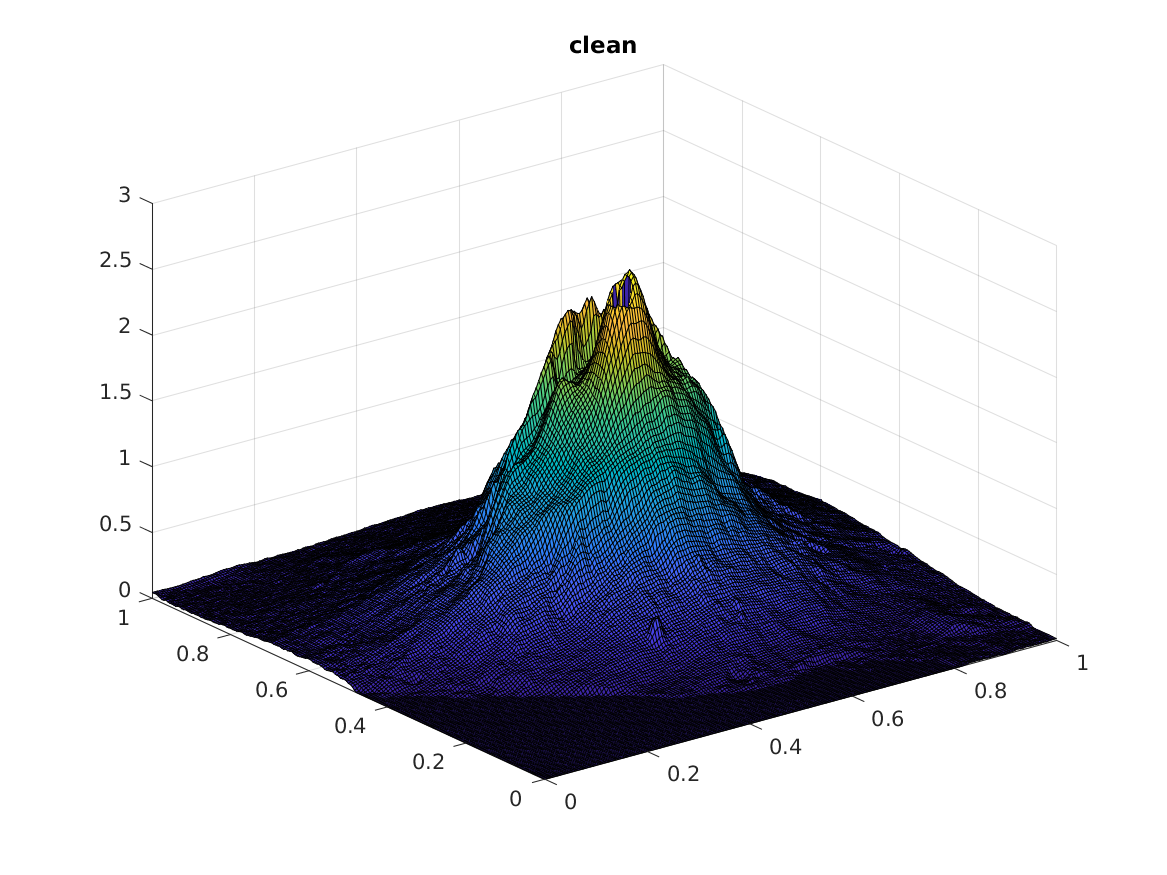}
\caption{Noiseless samples.}
\end{subfigure}\hfill
\begin{subfigure}[b]{0.29\textwidth}
\includegraphics[scale=0.25,trim={0.4cm 0cm 0.4cm 1cm}, clip]{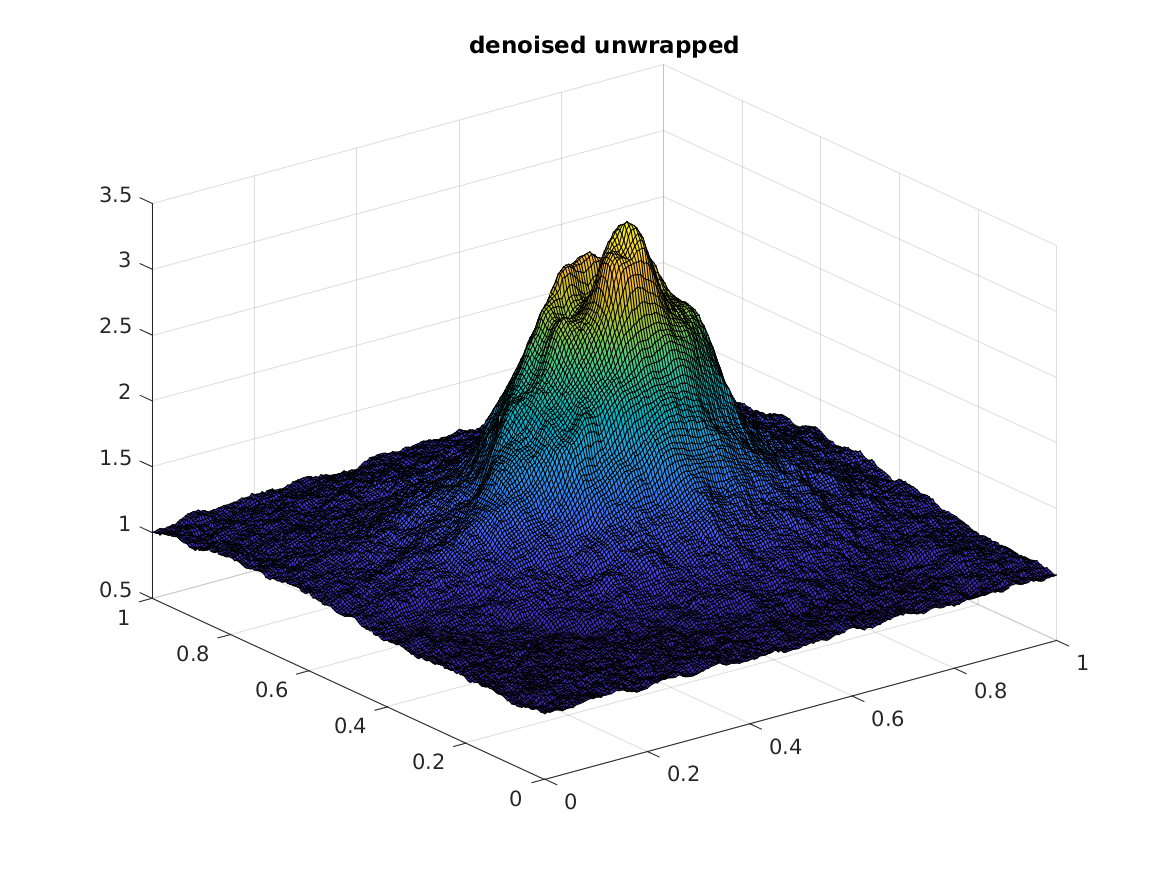}
\caption{Denoised unwrapped.}
\end{subfigure}
\hfill
\begin{subfigure}[b]{0.29\textwidth}
\includegraphics[scale=0.25,trim={0.4cm 0cm 0.4cm 1cm}, clip]{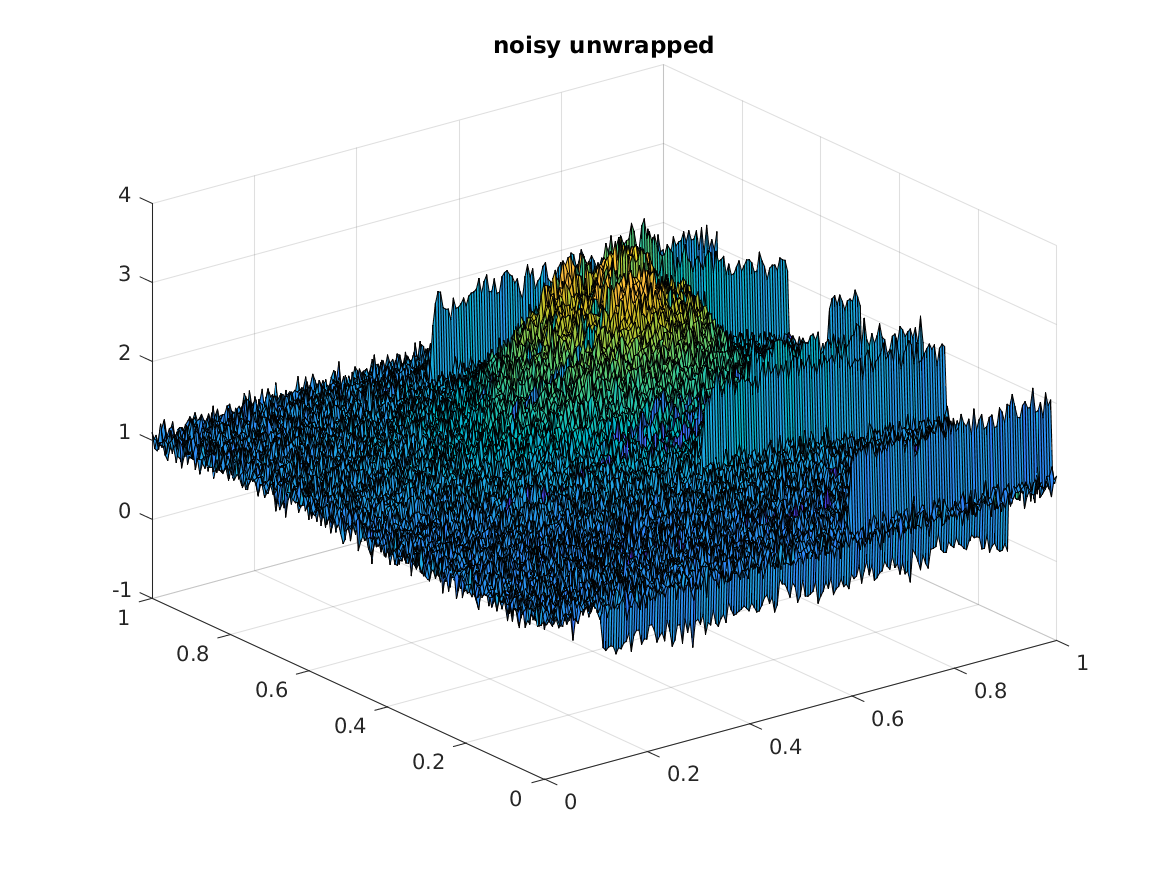}
\caption{Noisy unwrapped.\label{fig:jump_2D}}
\end{subfigure}
\caption{Denoising and unwrapping mod 1 samples obtained from the elevation map of Mount Vesuvius. The first two rows contain contour plots which allow to visualize the smoothness of the samples. Parameters:  $k = 40$ and $\sigma = 0.1$.
The elevation map of Mount Vesuvius (\texttt{N40E014.hgt.zip}) was downloaded thanks to the \texttt{readhgt.m} script written by  Fran\c{c}ois Beauducel from  \protect\url{https://dds.cr.usgs.gov/srtm/version2_1} .\label{fig:Vesuvius}}
\end{figure}
\FloatBarrier

\section{Discussion} \label{sec:discussion}
We start with a detailed overview of related work from the literature and conclude by outlining directions for future work.
\subsection{Related work} \label{subsec:existing_work}
As discussed in Section~\ref{sec:intro}, the phase unwrapping problem has been studied extensively in the signal processing community with a long history of work. Let us define a ``wrap'' function $w_{\gamma}:\matR \to [-\gamma,\gamma)$ 
\begin{equation*}
 w_{\gamma}(t) := 2\gamma\left(\left[\frac{t}{2\gamma} + \frac{1}{2} \right] - \frac{1}{2} \right)   
\end{equation*}
that outputs centered modulo $2\gamma$ values, with $[a]$ denoting the fractional part of $a \in \matR$. 
\rev{In Appendix~\ref{appsec:corresp_modulo}, we show that 
there is a one-to-one correspondence between the operator $\frac{w_{\gamma}}{2\gamma}$ and the operator $t \mapsto \left(\frac{t}{2\gamma} \right) \mod 1$}. In phase unwrapping $\gamma = \pi$ so that we are given noisy modulo samples $y_i = w_{\pi}(f(x_i) + \eta_i)$ for $1 \leq i \leq n$, with $f:\matR^d \to \matR$ the unknown signal of interest. Denoting $\est{f}_i = f(x_i) + \eta_i$, the classical Itoh's condition \cite{Itoh_82}  for $d=1$ states that if 
\begin{equation*}
    \abs{\est{f}_i - \est{f}_{i-1}} \leq \pi; \quad i = 2,\dots,n,
\end{equation*}
then this implies $\est{f}_i - \est{f}_{i-1} = w_{\pi}(y_i - y_{i-1})$ for all $i$. This suggests that if Itoh's condition holds, then one can recover the samples $\est{f}_i$, up to a global shift of an integer multiple of $2\pi$, in a sequential manner. As mentioned in Remark~\ref{rem:itoh_multiv}, the generalization of this for the case $d = 2$ is known, however we are unaware of a general version of Itoh's condition for the multivariate setting. 

Apart from the natural approach where one denoises the wrapped samples with the hope that the denoised estimates satisfy Itoh's condition, numerous other robust methods have been proposed in the phase unwrapping literature. While the list is too long to review in detail here, we remark that these approaches can be roughly classified as (i) least squares approach (e.g. \cite{Pritt94,Marroquin95}), (ii) branch cut methods \cite{Prati90,Chavez02} and (iii) network flow methods \cite{ching92,takeda96}. The reader is referred to \cite{CMT18_long} as well as the excellent survey by Ying \cite{Ying06} for a more comprehensive discussion about the literature.  One drawback of the phase unwrapping literature is that the methods are typically based on heuristics, and do not, in general, come with theoretical performance guarantees. 

In the past couple of years several new approaches have been proposed for this problem with an emphasis on theoretical guarantees \rev{focusing also on the recovery of $f$}. As detailed below, these approaches typically rely on making certain smoothness assumptions on the underlying $f$.
\begin{enumerate}
\item Bhandari \textit{et al.} \cite{bhandari17} considered a setup where $f$ is a univariate bandlimited function (spectrum lying in $[-\pi,\pi]$), with equispaced \emph{noiseless} modulo samples available via the map $w_{\gamma}$. Their main result was to show that if the sampling width satisfies $T \leq \frac{1}{2\pi e}$, then the samples of $f$, and hence $f$ itself, can be recovered exactly. \rev{The main idea is to use the fact that the modulo operation commutes with the higher order finite difference operator in a certain sense. This is leveraged to shrink the amplitude of the bandlimited signals by taking finite differences of sufficiently high order so that the modulo operation has no effect on the signal. The analysis requires $f$ to be $C^m$ smooth with $m$ sufficiently large so that finite differences of sufficiently large order can be used.} The same authors extended their results to other settings where different assumptions were made on $f$. Specifically, they assume in \cite{sparse_unlimsin18} that $f$ can be represented as the convolution of a sum of $k$ Diracs, while in \cite{sparse_unlim18}, they consider $f$ to be a sum of $k$ sinusoids. In both these papers, they show that if the number of samples is large enough (i.e., $n \gtrsim k$), and $T \leq \frac{1}{2\pi e}$, then $f$ can be recovered exactly.  \rev{In~\cite[Section IV]{bhandari17}, a bounded noise model is also considered. However, in contrast with this paper, an additive noise is added to the modulo samples, while our noise assumption~\eqref{eq:noisy_mod} assumes that the modulo is taken on the noisy signal.}

\item Rudresh \textit{et al.} \cite{rudresh_wavelet} consider $f$ to be a univariate Lipschitz function (with Lipschitz constant $M$) 
with equispaced sampling through the map $w_{\gamma}$. The method proposed therein involves the application of a wavelet filter to the modulo samples, which is then followed by a LASSO type procedure to ultimately recover $f$. Their main result states if $f$ is a polynomial of degree $p$, then a sampling width less than (up to a constant) $\frac{1}{M p}$ suffices for exact recovery of $f$. While $M$ should of course depend on $p$, this was not stated explicitly in \cite{rudresh_wavelet}. While no theoretical results are provided in the presence of noise, they showed their approach to be more robust than that of Bhandari \textit{et al.} \cite{bhandari17} through numerical simulations.

\item The work of Cucuringu and Tyagi \cite{CMT18_long} that we introduced in Section~\ref{subsec:tightness_SDP_intro} essentially focuses on solving \eqref{prog:qcqp} and its relaxations for denoising mod $1$ samples, for the model \eqref{eq:noisy_mod}. Apart from the SDP relaxation discussed eariler, they also considered a ``sphere-relaxation'' of the constraint set leading to a trust region subproblem (TRS)
\begin{equation} 
\min_{\norm{g}_2^2 = n} \norm{g - z}_2^2 + \lambda g^* L g \iff \min_{\norm{g}_2^2 = n} \lambda g^* L g - 2\real(g^* z).  \label{prog:trs} \tag{$\text{TRS}$}  
\end{equation}
The main theoretical results in \cite{CMT18_long} revolve around bounding the error term $\norm{\gest - h}_2$ where $\gest$ is the solution of \eqref{prog:trs} and $h \in \bbT_n$ is the ground truth as defined in Section~\ref{sec:prob_setup}. For instance, when $d = 1$ and $\eta_i \sim \calN(0,\sigma^2)$ i.i.d, they show that if $\lambda \Delta \lesssim 1$ and $\sigma \lesssim 1$, then provided the $x_i$'s form a uniform grid in $[0,1]$, we have\footnote{The result in \cite{CMT18_long} bounds $\norm{\gest - h}_2$ but we can use the inequality in Fact~\ref{Prop:Projection}. Moreover, \cite[Theorem 14]{CMT18_long} has a more complicated statement than what is stated in \eqref{eq:cmt_err_bd}, however it can be verified  that it is of the same order as in \eqref{eq:cmt_err_bd}.} w.h.p
\begin{align} \label{eq:cmt_err_bd} 
    \norm{\frac{\gest}{\abs{\gest}} - h}_2^2 \lesssim \sigma n + \frac{\lambda M^2 \Delta^3}{n}.
\end{align}
where $M$ is the Lipschitz constant of $f$. However, this bound is in general weak due to the fact that w.h.p, $\norm{z-h}_2^2 \asymp \sigma^2 n$ when $\sigma \lesssim 1$. Therefore the bound in \eqref{eq:cmt_err_bd} does not show that $\norm{\frac{\gest}{\abs{\gest}} - h}_2^2 \ll \norm{z-h}_2^2$. 

\item In a parallel work with the present paper, Tyagi \cite{tyagi20} provided an improved $\ell_2$ error analysis for the \eqref{prog:trs} estimator, as well as an unconstrained quadratic program (UCQP) corresponding to the unconstrained relaxation of \eqref{prog:qcqp}  
\begin{equation} 
\min_{g \in \mathbb{C}^n} \norm{g - z}_2^2 + \lambda g^* L g.  \label{prog:ucqp} \tag{$\text{UCQP}$}  
\end{equation}
For both \eqref{prog:trs} and \eqref{prog:ucqp}, $\ell_2$ error bounds are derived for the more general denoising setting where $h \in \bbT_n$ is smooth with respect to an undirected, connected graph $G = ([n], E)$ in the sense that the quadratic variation $h^{*} L h$ is ``small''. The results are also applied to the model \eqref{eq:noisy_mod} when $d=1$ and $\eta_i \sim \calN(0,\sigma^2)$ i.i.d, with the $x_i$'s forming a uniform grid, and $G$ a path graph. For the choice $\lambda \asymp \left(\frac{\sigma^2 n^{10/3}}{M^2} \right)^{1/4}$ it is shown for any fixed $\varepsilon \in (0,1)$ that if $o(1) \leq \sigma \lesssim 1$ for $n$ large enough, then the solutions $\gest$ of \eqref{prog:ucqp} and \eqref{prog:trs} satisfy (w.h.p) 
$$\norm{\frac{\gest}{\abs{\gest}} - h}_2^2 \leq \varepsilon \norm{z - h}_2^2.$$ 
\end{enumerate}
The above results are in the nonparametric setting where $f$ is typically highly non-linear. However the setting where $f$ is  linear has also been considered recently. For \rev{instance}, Shah and Hegde \cite{shah2018signal} assume $f$ to be a sparse linear function, and provide conditions for exact recovery of $f$ in the noiseless setting (in the regime $n \ll d$). This is accomplished via an alternating minimization based algorithm. Musa \textit{et al.} \cite{MusaJG18} also consider $f$ to be a sparse linear function, but assume that it is generated from a Bernoulli-Gaussian distribution. The recovery of $f$ is achieved via a generalized approximate message passing algorithm, but no theoretical analysis is provided.

\subsection{Future directions} \label{subsec:fut_work}
An important direction for future work is to improve our analysis for the tightness of the SDP estimator. As discussed in Remark~\ref{rem:qcqp_linf_bd}, the main bottleneck of our analysis is in the $\ell_{\infty}$ error bound for the solution $\gest$ of \eqref{prog:qcqp} (in Theorem~\ref{thm:main_linf_qcqp}) which is admittedly not satisfactory. Deriving bounds satisfying the property $\norm{\gest - h}_{\infty} \ll \norm{z - h}_{\infty}$ is an important question in its own right, and the analysis for the same should utilize information about $\gest$ available through its first and second order optimality conditions. Moreover, we expect to see an ``optimal choice'' of the regularizer $\lambda$ -- similar to the aforementioned $\ell_2$ error analysis for \eqref{prog:trs}, \eqref{prog:ucqp} derived in \cite{tyagi20} -- which minimizes the bound on $\norm{\gest - h}_{\infty}$. Such an analysis is typically facilitated in a random noise model where $z_i = h_i \exp(\rmi 2\pi \eta_i)$, with $\eta_i \sim \calN(0,\sigma^2)$ i.i.d Gaussian for each $i$. 
\subsection*{Acknowledgments}
EU: The research leading to these results has received funding from the European Research Council under the European Union's Horizon 2020 research and innovation program / ERC Advanced Grant E-DUALITY (787960). This paper reflects only the authors' views and the Union is not liable for any use that may be made of the contained information.
Research Council KUL: Optimization frameworks for deep kernel machines C14/18/068. Flemish Government: FWO: projects: GOA4917N (Deep Restricted Kernel Machines: Methods and Foundations), PhD/Postdoc grant. This research received funding from the Flemish Government (AI Research Program). Ford KU Leuven Research Alliance Project KUL0076 (Stability analysis and performance improvement of deep reinforcement learning algorithms).

\bibliographystyle{plain}
\bibliography{references}

\appendix
\section{Technical results}
The following technical results are instrumental for proving our main results.
\begin{fact}[\cite{LiuPhaseSynchronization}]\label{Prop:Projection}
Let $q\geq 1$, $z\in \bbT_n$ and $w\in \mathbb{C}_n$. Then, it holds
$\| \frac{w}{|w|} -z \|_q \leq 2 \| w - z\|_q.$
\end{fact}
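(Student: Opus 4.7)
The plan is to reduce the vector inequality to the corresponding scalar inequality on $\mathbb{C}$, and then prove the scalar version via the observation that $a \mapsto a/|a|$ is the nearest-point projection onto the unit circle $\bbT_1$. Concretely, it suffices to establish
\[
\left|\frac{a}{|a|} - b\right| \leq 2|a - b|, \qquad \forall\, a \in \mathbb{C},\ b \in \bbT_1.
\]
Once this holds, applying it componentwise to $(w_i, z_i)$ and raising to the power $q$ (or taking the supremum when $q = \infty$) gives $\|\frac{w}{|w|} - z\|_q^q \leq 2^q \|w - z\|_q^q$, from which the claim follows by taking $q$-th roots.

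To prove the scalar bound, I would split on whether $a = 0$. When $a = 0$, the convention $a/|a| := 1$ fixed in the notation section yields $|1 - b| \leq 2 = 2|a - b|$, since $|b| = 1$. When $a \neq 0$, the key observation is that $a/|a|$ is the closest point on $\bbT_1$ to $a$: for any $c \in \bbT_1$, the reverse triangle inequality gives
\[
|a - c| \geq \bigl||a| - |c|\bigr| = \bigl||a| - 1\bigr| = \left|a - \frac{a}{|a|}\right|.
\]
Specializing to $c = b$ and combining with the triangle inequality,
\[
\left|\frac{a}{|a|} - b\right| \leq \left|\frac{a}{|a|} - a\right| + |a - b| \leq |b - a| + |a - b| = 2|a - b|.
\]

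There is no real obstacle here: the argument is purely geometric and the only subtle point is noticing that the standard projection identity for the unit circle is exactly what slots into the triangle inequality to absorb the first term. The factor of $2$ is essentially sharp, realized (up to a small perturbation) when $a$ lies on the ray from the origin opposite to $b$.
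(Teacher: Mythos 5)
Your proof is correct. The paper itself does not supply an argument for this fact---it simply cites \cite{LiuPhaseSynchronization}---so there is no in-paper proof to compare against; your self-contained derivation is the standard one, reducing to the scalar bound $\bigl|\tfrac{a}{|a|}-b\bigr|\leq 2|a-b|$ via the observation that $\bigl|a-\tfrac{a}{|a|}\bigr|=\bigl||a|-1\bigr|\leq|a-b|$ (nearest-point property of the circle projection, handled correctly at $a=0$ using the paper's convention) followed by the triangle inequality, and then lifting componentwise to the $\ell_q$ norm. This matches the argument in the cited reference and fills in the details the paper leaves implicit.
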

Fact \ref{Prop:Projection} indeed means that the distance between $z\in \bbT_n$ and $w\in \mathbb{C}_n$ after projection on $\bbT_n$ can be upper bounded by the distance before projection, up to a scalar factor. It is proved in \cite{LiuPhaseSynchronization}.

The following result relates the distance between two points $u, v \in \mathbb{T}_1$ with their arguments.
\begin{fact}\label{Fact:Wrap-around}
Let $u = \exp(2\pi \rmi f)$ and $v =  \exp(2\pi \rmi \widehat{f})$
For $0 \leq \epsilon \leq 2$, let $|v -u|\leq \epsilon$. Then, we have
\[
d_w(\widehat{f} \bmod 1,f \bmod 1) \leq \frac{\epsilon}{4} .
\]
\end{fact}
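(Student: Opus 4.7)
The plan is to convert the chord distance $|v-u|$ into a sine of half the angular separation, and then use Jordan's inequality to produce a linear lower bound on that sine in terms of the wrap-around distance. This gives the stated bound $\varepsilon/4$ with the sharp constant.

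Concretely, first I would write $|v-u| = |e^{2\pi\iota f} - e^{2\pi\iota \widehat{f}}| = 2|\sin(\pi(\widehat{f}-f))|$. Setting $\delta := d_w(\widehat{f}\bmod 1, f\bmod 1) \in [0,1/2]$, there is an integer $k$ with $\widehat{f} - f - k = \pm \delta$, and since $\sin$ is $\pi$-antiperiodic in absolute value (i.e.\ $|\sin(\pi t)| = |\sin(\pi(t-k))|$ for $k\in\mathbb{Z}$), we obtain the clean identity $|v-u| = 2\sin(\pi\delta)$.

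Next I would invoke Jordan's inequality, namely $\sin(x) \geq \frac{2x}{\pi}$ for $x \in [0,\pi/2]$. Applying it with $x=\pi\delta \in [0,\pi/2]$ yields $\sin(\pi\delta) \geq 2\delta$, hence $|v-u| \geq 4\delta$. Combining with the hypothesis $|v-u| \leq \varepsilon$ gives $4\delta \leq \varepsilon$, i.e.\ $\delta \leq \varepsilon/4$, which is the claim.

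I do not expect any real obstacle here: the only subtle point is keeping track of the mod-1 reduction correctly when moving from $|\sin(\pi(\widehat{f}-f))|$ to $\sin(\pi\delta)$ (this uses that $\delta \in [0,1/2]$ so $\sin(\pi\delta) \geq 0$ without an absolute value), and observing that the hypothesis $\varepsilon \leq 2$ is automatically consistent since $|v-u|\leq 2$ whenever $u,v\in\bbT_1$. No upper bound on $\varepsilon$ beyond this is actually needed for the inequality itself to hold.
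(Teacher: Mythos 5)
Your proof is correct and, at its core, follows the same route as the paper's: establish the exact identity $\lvert v-u\rvert = 2\sin(\pi\,d_w(\widehat f\bmod 1, f\bmod 1))$, then relate $\sin$ to its argument linearly with constant $2/\pi$. The only cosmetic difference is direction: you apply Jordan's inequality $\sin x \geq 2x/\pi$ on $[0,\pi/2]$ directly, while the paper inverts the identity and then bounds $\arcsin(x)\leq \pi x/2$ on $[0,1]$ by convexity — these are the same inequality stated for the function and for its inverse. Your handling of the integer shift and the sign (reducing $\widehat f - f$ modulo $1$ to $\pm\delta$ with $\delta\in[0,1/2]$ so the absolute value on the sine can be dropped) is careful and correct, matching what the paper compresses into its chain of equalities.
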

\begin{proof}
The proof follows the same lines as in \cite{CMT18_long}.
We know that
\begin{align*}
|u-v| &= |\exp(2\pi \rmi f)- \exp(2\pi \rmi \widehat{f})|\\
&= |1-\exp(2\pi \rmi (\widehat{f}\bmod 1-f\bmod 1))|\\
&= 2|\sin(\pi(\widehat{f}\bmod 1-f \bmod 1))|\\
&= 2\sin(\pi|\widehat{f}\bmod 1-f\bmod 1|)\\
&= 2\sin(\pi(1-|\widehat{f}\bmod 1-f\bmod 1|))\\
&= 2\sin(\pi d_w(\widehat{f}\bmod 1, f\bmod 1))
\end{align*}
where the second and third last equalities follow from $\widehat{f}\bmod 1-f\bmod 1\in [-1,1]$. Clearly, \[
\min\{|\widehat{f}\bmod 1-f\bmod 1| , 1-|\widehat{f}\bmod 1-f\bmod 1| \} \leq \frac{1}{\pi}\arcsin(\epsilon/2).
\]
Notice that $\arcsin (x)$ is a convex function on $[0,1]$  with $\arcsin(0) = 0$ and $\arcsin(1) = \pi/2$. The upshot is that $\arcsin(x)\leq \pi x/2$ for all $x\in [0,1]$. Hence, we find
\[
d_w(\widehat{f}\bmod 1, f\bmod 1) \leq \frac{1}{4}\epsilon.
\]
\end{proof}
Finally, we recall the well known Bernstein's concentration inequality for sums of independent random variables.
\begin{theorem}[Bernstein inequality for bounded random variables\rev{; see Corollary 7.31 in \cite{RauhutBook}}]\label{thm:Bernstein}
Let $X_1,\dots ,X_M$ be independent random variables with zero mean such that $|X_\ell|\leq K$ almost surely for $\ell\in [M]$ and some constant $K>0$. Furthermore, assume $\mathbb{E}|X_\ell|^2 \leq \sigma_\ell^2$ for constants $\sigma_\ell >0$, $\ell\in [M]$. Then, for all $t>0$,
\[
\Pr \left( \left|\sum_{\ell=1}^M X_\ell \right|\geq t \right)\leq 2\exp\left(-\frac{t^2/2}{\sigma^2 +Kt/3} \right),
\]
where $\sigma^2 = \sum_{\ell = 1}^M \sigma_\ell^2$.

\end{theorem}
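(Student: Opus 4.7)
} The plan is to follow the standard Chernoff-plus-moment-generating-function route, which is the textbook derivation of Bernstein's inequality for bounded variables. Let $S = \sum_{\ell=1}^M X_\ell$. First I would apply the exponential Markov inequality: for any $\lambda > 0$,
\[
\Pr(S \geq t) \;\leq\; e^{-\lambda t}\, \mathbb{E}[e^{\lambda S}] \;=\; e^{-\lambda t} \prod_{\ell=1}^M \mathbb{E}[e^{\lambda X_\ell}],
\]
where the equality uses independence. So everything reduces to a uniform bound on each moment generating function $\mathbb{E}[e^{\lambda X_\ell}]$.

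The key lemma, which I would state and prove separately, is the Bernstein moment-generating-function estimate: for any $\lambda \in (0, 3/K)$,
\[
\mathbb{E}[e^{\lambda X_\ell}] \;\leq\; \exp\!\left(\frac{\lambda^2 \sigma_\ell^2/2}{1 - \lambda K/3}\right).
\]
To establish this, I would expand $e^{\lambda X_\ell} = 1 + \lambda X_\ell + \sum_{j \geq 2} \lambda^j X_\ell^j / j!$, take expectations (the linear term vanishes since $\mathbb{E} X_\ell = 0$), and bound $\mathbb{E}|X_\ell|^j \leq K^{j-2}\mathbb{E}|X_\ell|^2 \leq K^{j-2}\sigma_\ell^2$ using $|X_\ell| \leq K$. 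Summing the geometric-type series $\sum_{j\geq 2}(\lambda K)^{j-2}/j! \leq \sum_{j\geq 2}(\lambda K)^{j-2}/(2 \cdot 3^{j-2}) = \tfrac{1}{2(1-\lambda K/3)}$ yields $\mathbb{E}[e^{\lambda X_\ell}] \leq 1 + \frac{\lambda^2 \sigma_\ell^2/2}{1-\lambda K/3}$, and the stated exponential bound follows from $1+u \leq e^u$.

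Next I would multiply these bounds across $\ell$ to get $\prod_\ell \mathbb{E}[e^{\lambda X_\ell}] \leq \exp\!\bigl(\frac{\lambda^2 \sigma^2/2}{1-\lambda K/3}\bigr)$ where $\sigma^2 = \sum_\ell \sigma_\ell^2$, and therefore
\[
\Pr(S \geq t) \;\leq\; \exp\!\left(-\lambda t + \frac{\lambda^2 \sigma^2/2}{1-\lambda K/3}\right).
\]
Then I would choose $\lambda = \frac{t}{\sigma^2 + Kt/3} \in (0, 3/K)$, which is the standard optimizer (up to constants), and check by direct substitution that the right-hand side simplifies to $\exp\!\bigl(-\frac{t^2/2}{\sigma^2 + Kt/3}\bigr)$.

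Finally I would repeat the argument verbatim with $-S$ in place of $S$ (the hypotheses are symmetric in sign: the $X_\ell$ are zero-mean, $|-X_\ell|\leq K$, and $\mathbb{E}|-X_\ell|^2 = \sigma_\ell^2$), yielding the same bound on $\Pr(S \leq -t)$, and conclude via the union bound that $\Pr(|S|\geq t) \leq 2\exp\!\bigl(-\frac{t^2/2}{\sigma^2 + Kt/3}\bigr)$. The only nontrivial step is the MGF estimate; everything else is routine. Since this is a classical result cited to \cite{RauhutBook}, in the paper it likely suffices to simply invoke the cited reference rather than reproduce the derivation.
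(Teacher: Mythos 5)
Your derivation is correct, and you've correctly anticipated that the paper does not reproduce a proof at all: Theorem~\ref{thm:Bernstein} is stated as a black-box citation to Corollary~7.31 of \cite{RauhutBook}, with no argument given. Since there is no proof in the paper to compare against, the only substantive check is whether your reconstruction is sound, and it is. The Chernoff step, the Taylor-series MGF bound via $\mathbb{E}|X_\ell|^j \leq K^{j-2}\sigma_\ell^2$ together with $j! \geq 2\cdot 3^{j-2}$, the geometric-series resummation giving $\mathbb{E}[e^{\lambda X_\ell}] \leq \exp\!\bigl(\tfrac{\lambda^2\sigma_\ell^2/2}{1-\lambda K/3}\bigr)$ for $0<\lambda<3/K$, the product over $\ell$, the choice $\lambda = t/(\sigma^2+Kt/3)$ (which one verifies lands in the admissible range and produces exponent exactly $-\tfrac{t^2/2}{\sigma^2+Kt/3}$ after simplifying $1-\lambda K/3 = \sigma^2/(\sigma^2+Kt/3)$), and the symmetric argument for $-S$ plus union bound are all exactly the textbook route in Rauhut--Foucart. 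Nothing is missing.
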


\section{Correspondence between mod $1$ samples and folding with a centered modulo} \label{appsec:corresp_modulo}
\rev{As explained in the introduction, a self-reset ADC introduces discontinuities in the signals so that its range is an interval $[-\gamma, \gamma)$ where $\gamma>0$ is the so-called as the ADC threshold.
Following~\cite{bhandari17}, the folding  is performed by the hardware and is modeled by the following centered modulo operator 
$$
w_\gamma: \mathbb{R}\to [-\gamma, \gamma) \text{ such that } t\mapsto 2\gamma \left(\left[ \frac{t}{2\gamma} +\frac{1}{2}\right]-\frac{1}{2}\right),$$
where $[t] = t - \lfloor t \rfloor$ denotes the fractional part. Note that the range of the centered modulo is a half-open interval. We now show the correspondence between this centered modulo operator and the noise model~\eqref{eq:noisy_mod} which is studied in this paper.
By a case-by-case analysis, it is straightforward to check that 
$
w_\gamma(t) = 2\gamma \calH\left(\frac{t}{2\gamma} \bmod 1 \right) 
$
where $\calH$ is the following discontinuity
\begin{equation*}
    \calH: \left[0, 1\right)\to \left[-1/2,  1/2\right) \text{ such that }  x \mapsto \begin{cases}
    x &\text{ if } x < 1/2\\
    x-1 &\text{ if } x \geq  1/2
                \end{cases}.
\end{equation*}
This function admits an inverse which is given by
\begin{equation*}
    \calH^{-1}:\left[-1/2,  1/2\right) \to \left[0, 1\right) \text{ such that }  x \mapsto \begin{cases}
    x + 1 &\text{ if } x < 0\\
    x &\text{ if } x \geq  0
                \end{cases}.
\end{equation*}
Therefore, mod 1 samples are in one-to-one correspondence with the centered modulo, since
$$
\frac{w_\gamma(t)}{2\gamma} = \calH\left(\frac{t}{2\gamma} \bmod 1 \right).
$$
Let us analyse the role of $\gamma$.
For simplicity, we discuss the case $d = 1$ and consider noisy samples $f(x_i) + \eta_i$  where $f$ is $M$-Lipschitz and $\eta_i \sim \mathcal{N}(0,\sigma^2)$ i.i.d for $i=1, \dots, n$. Then, the rescaled samples can be written as
$$
\frac{f(x_i) + \eta_i}{2\gamma} = f^{(\gamma)} (x_i) + \eta_i^{(\gamma)},
$$
 where $f^{(\gamma)}$ is  $\frac{M}{2\gamma}$-Lipschitz and   $\eta_i^{(\gamma)}\sim \mathcal{N}(0,\left(\frac{\sigma}{2\gamma}\right)^2)$ \rev{i.i.d} with $i=1, \dots, n$. In other words, the effect of $\gamma$ is to rescale the Lipschitz constant and the noise variance. Hence, the noise model~\eqref{eq:noisy_mod} and the settings of our analysis applies to signals obtained in the context of a self-reset ADC.}


\section{Optimality conditions of QCQP}
Denote the objective of the QCQP by 
\[
F(g) = \lambda g^* L g - 2\real(g^* z).
\]
We recall the covariant derivative is
\[
\grad F(g) = \proj_g \nabla F(g) = 2\left\{ \rev{\lambda}Lg -z -\real \diag\left( (\rev{\lambda}Lg-z)g^* \right)\right\}  
\]
where the projection on the tangent space at $g\in \bbT_n$ is defined in \eqref{eq:proj}. The first order necessary optimality condition in then indeed $\grad F(\widehat{g})=0$. The second order necessary condition involves the Hessian as follows
\[
\langle \gdot , \Hess F(g) [\gdot]\rangle \geq 0 \text{ for all } \gdot\in T_g\bbT_n,
\]
where $\Hess F(g)[\gdot] = \proj_g D \grad F(g)[\gdot ]$ with $D$ denoting the directional derivative.
\begin{proposition} \label{prop:SecondOrder}
We have
$
\Hess F(g)[\gdot ] = 2\left\{ \rev{\lambda} L \gdot -\real[\diag\left((\rev{\lambda} Lg-z)g^*\right)] \gdot \right\}
$
for all $\gdot\in T_g\bbT_n$.
\end{proposition}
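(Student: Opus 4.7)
The plan is to apply the standard formula $\Hess F(g)[\gdot] = \proj_g D_{\gdot}\grad F(g)$ for the Riemannian Hessian on $\bbT_n$ seen as an embedded submanifold of $\mathbb{C}^n$, starting from the expression $\grad F(g) = 2(\lambda Lg - z) - 2\real[\diag((\lambda Lg - z)g^*)]g$ recalled just before the proposition, and exploiting the tangent identity $\real[\diag(\gdot g^*)] = 0$ for $\gdot \in T_g\bbT_n$. The calculation is essentially one careful application of the product and chain rules followed by a projection.

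First, I would write $M(g) := \real[\diag((\lambda Lg - z)g^*)]$, a real diagonal matrix, so that $\grad F(g) = 2(\lambda Lg - z) - 2M(g)g$. The directional derivative reads
\[
D_\gdot \grad F(g) \;=\; 2\lambda L\gdot \;-\; 2\bigl(D_\gdot M(g)\bigr) g \;-\; 2 M(g)\gdot,
\]
where the chain rule gives $\bigl(D_\gdot M(g)\bigr)_{ii} = \real\bigl(\lambda (L\gdot)_i g_i^* + (\lambda Lg - z)_i \gdot_i^*\bigr)$. Then I would apply $\proj_g(x) = x - \real[\diag(xg^*)]g$ to each of the three pieces. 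The useful observation is that any vector of the form $\widetilde{D}g$ with $\widetilde{D}$ real diagonal is normal to $T_g\bbT_n$: since $|g_i|=1$ one has $\diag(\widetilde{D}g\,g^*) = \widetilde{D}$, so $\proj_g(\widetilde{D}g) = 0$. This instantly kills the middle piece $\bigl(D_\gdot M(g)\bigr)g$. The piece $M(g)\gdot$ is already tangent, because its $\proj_g$-correction has $i$-th diagonal entry $M(g)_{ii}\real(\gdot_i g_i^*) = 0$. Only the piece $2\lambda L\gdot$ fails to be tangent, and it acquires the correction $-2\lambda \real[\diag((L\gdot)g^*)]g$, which is again of the form (real diagonal)$\cdot g$ and hence normal.

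Collecting the surviving pieces yields
\[
\Hess F(g)[\gdot] \;=\; 2\lambda L\gdot \;-\; 2 M(g)\gdot \;-\; 2\lambda \real[\diag((L\gdot)g^*)]g,
\]
which is exactly the stated formula up to a normal vector $\widetilde{D}'g$. Since any such normal vector satisfies $\langle \gdot, \widetilde{D}'g\rangle = \sum_i \widetilde{D}'_{ii}\real(\gdot_i^* g_i) = 0$ for $\gdot \in T_g\bbT_n$, dropping it does not change the bilinear form used in the second-order condition $\langle \gdot, \Hess F(\ghat)[\gdot]\rangle \geq 0$, and one recovers the statement $\Hess F(g)[\gdot] = 2\{\lambda L\gdot - \real[\diag((\lambda Lg - z)g^*)]\gdot\}$ as used in Lemma~\ref{lemma:Opt}. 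The only real hurdle is the careful bookkeeping of which of the four intermediate summands coming out of the product/chain rule are already in the normal bundle and which collapse via the tangent identity $\real[\diag(\gdot g^*)] = 0$; nothing deeper is required.
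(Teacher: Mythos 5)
Your proof follows the same compute-and-project route as the paper (differentiate $\grad F$ along $\gdot$, then apply $\proj_g$), and the bookkeeping is correct. You do, however, handle one point more carefully than the printed proof. The paper observes which of the summands of $D\grad F(g)[\gdot]$ are of the form $Dg$ (killed by $\proj_g$) or $D\gdot$ (fixed by $\proj_g$), but says nothing about the leading term $2\lambda L\gdot$, which is neither: for $\gdot \in T_g\bbT_n$ one has $\real(\gdot_i g_i^*) = 0$, yet $(L\gdot)_i$ mixes in $\gdot_j$ for $j\neq i$, so $\real\big((L\gdot)_i g_i^*\big)$ need not vanish and $L\gdot$ is not tangent in general. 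Projecting $2\lambda L\gdot$ therefore produces an additional normal term $-2\lambda\,\real[\diag((L\gdot)g^*)]\,g$, which does not appear in the stated formula. You correctly identify this and observe that the discrepancy lies in the normal bundle, so it does not affect $\langle \gdot, \Hess F(\ghat)[\gdot]\rangle$ for tangent $\gdot$, which is the only way the proposition is used in Lemma~\ref{lemma:Opt}. This is a useful clarification of how the proposition should be read (as an identity modulo normal vectors, equivalently as an identity of bilinear forms on $T_g\bbT_n$); beyond that, the two arguments are essentially identical.
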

\begin{proof}
We have simply
\[
\grad F(g+t\gdot) = 2\Big\{ \rev{\lambda} L(g+t\gdot) -z -\real \diag\big[ (\rev{\lambda} L(g+t\gdot)-z)(g+t\gdot)^* \big] \rev{(g+t\gdot)}\Big\}.
\]
Then, by differentiating with respect to $t$ and evaluating the result at $t=0$, we find
\begin{align*}
D\grad F(g)[\gdot ] &= \left[\frac{\rmd}{\rmd t}\grad F(g+t\gdot)\right]_{t=0}\\
& = 2\left\{ \rev{\lambda} L\gdot -\real[\diag(\rev{\lambda} L\gdot g^*)]g -\real[\diag((\rev{\lambda} Lg -z) \gdot^*)]g -\real[\diag((\rev{\lambda} Lg -z) g^*)]\gdot  \right\}.
\end{align*}
The final result follows by projecting on the tangent space to $g$ and by noticing that $\proj_g (D g) = 0$ and $\proj_g (D\gdot) = D\gdot$, where $D$ is a real diagonal matrix.

\end{proof}

\end{document}